\newcommand{\thought}[1]{\todo[color=gray!40]{#1}}
\newcommand{\thought}[1]{}
\newcommand{\tomemail}{\href{mailto:tom.bachmann@zoho.com}{tom.bachmann@zoho.com}}
\newtheorem{proposition}{Proposition}
\newtheorem{corollary}[proposition]{Corollary}
\newtheorem{lemma}[proposition]{Lemma}
\newtheorem{theorem}[proposition]{Theorem}
\newtheorem*{theorem*}{Theorem}
\newtheorem*{corollary*}{Corollary}
\newtheorem*{proposition*}{Proposition}
\newtheorem*{lemma*}{Lemma}
\theoremstyle{definition}
\newtheorem{definition}[proposition]{Definition}
\newtheorem{construction}[proposition]{Construction}
\newtheorem*{definition*}{Definition}
\newtheorem*{construction*}{Construction}
\theoremstyle{remark}
\newtheorem{remark}[proposition]{Remark}
\newtheorem{example}[proposition]{Example}
\newcommand{\id}{\operatorname{id}}
\newcommand{\Z}{\mathbb{Z}}
\let\scr=\mathcal
\newcommand{\Gm}{{\mathbb{G}_m}}
\newcommand{\Gmp}[1]{{\mathbb{G}_m^{\wedge #1}}}
\newcommand{\1}{\mathbbm{1}}
\newcommand{\eff}{{\text{eff}}}
\newcommand{\veff}{{\text{veff}}}
\newcommand{\DM}{\mathcal{DM}}
\newcommand{\SH}{\mathcal{SH}}
\DeclareMathOperator*{\colim}{colim}
\let\lim=\relax
\DeclareMathOperator*{\lim}{lim}
\def\CAlg{\mathrm{CAlg}}
\def\NAlg{\mathrm{NAlg}}
\def\Span{\mathrm{Span}}
\def\Cat{\mathcal{C}\mathrm{at}{}}
\def\Fin{\cat F\mathrm{in}}
\def\red{\mathrm{red}}
\newcommand{\iso}{\cong}
\newcommand{\wequi}{\simeq}
\newcommand{\Mod}{\text{-}\mathcal{M}od}
\DeclareRobustCommand{\ul}{\underline}
\newcommand{\heart}{\heartsuit}
\newcommand{\Hom}{\operatorname{Hom}}
\def\op{\mathrm{op}}
\newcommand{\HI}{\mathbf{HI}}
\let\cat=\mathrm
\def\Sm{{\cat{S}\mathrm{m}}}
\def\Sch{\cat{S}\mathrm{ch}{}}
\newcommand{\E}{\mathrm{E}}
\def\SmQP{\mathrm{SmQP}{}}
\def\FEt{\mathrm{FEt}{}}
\def\fet{\mathrm{f\acute et}}
\def\inj{\mathrm{inj}}
\def\op{\mathrm{op}}
\def\all{\mathrm{all}}
\def\naive{\mathrm{naive}}
\def\EssSm{\Sm^{\text{ess}}}
\title{Motivic Tambara functors}
\date{\today}
\author{Tom Bachmann}
\address{Fakultät Mathematik, Universität Duisburg-Essen, Essen, Germany}
\email{\tomemail}
\begin{document}

\maketitle

\begin{abstract}
Let $k$ be a field and denote by $\SH(k)$ the motivic stable homotopy category.
Recall its full subcategory $\SH(k)^{\eff\heart}$ \cite{bachmann-very-effective}.
Write $\NAlg(\SH(k))$ for the category
of $\Sm$-normed spectra \cite{bachmann-norms}; recall that there is a forgetful
functor $U: \NAlg(\SH(k)) \to \SH(k)$. Let $\NAlg(\SH(k)^{\eff\heart}) \subset
\NAlg(\SH(k))$ denote the full subcategory on normed spectra $E$ such that $UE
\in \SH(k)^{\eff\heart}$. In this article we provide an explicit description of
$\NAlg(\SH(k)^{\eff\heart})$ as the category of effective
homotopy modules with étale norms, at least if $char(k) = 0$. A weaker statement
is available if $k$ is perfect of characteristic $> 2$.
\end{abstract}

\tableofcontents

\section{Introduction}

\subsection*{Norms and normed spectra}
In \cite{bachmann-norms}, we defined for every finite étale morphism $f: S' \to
S$ of schemes a symmetric monoidal functor of symmetric monoidal
$\infty$-categories $f_\otimes: \SH(S') \to \SH(S)$. If $S' = S^{\coprod n}$ and
$f$ is the fold map, then $f_\otimes: \SH(S') \wequi \SH(S)^n \to \SH(S)$ is the
$n$-fold smash product. These norm maps commute with arbitrary base change and
assemble into a functor
\[ \SH^\otimes: \Span(\Sch, \all, \fet) \to \widehat{\Cat}_\infty, (X
\xleftarrow{p} Y \xrightarrow{f} Z) \mapsto (\SH(X) \xrightarrow{f_\otimes p^*}
\SH(Z)). \]
Here $\Span(\Sch, \all, \fet)$ denotes the $(2,1)$-category of spans in schemes,
where the forward arrows are required to be finite étale \cite[Appendix
C]{bachmann-norms}. The category $\NAlg(\SH(S))$ is defined as the category of
sections of the restriction of $\SH^\otimes$ to $\Span(\Sm_S, \all, \fet)$,
cocartesian over backwards arrows \cite[Section 7]{bachmann-norms}. In other
words, an object $E \in \NAlg(\SH(S))$ consists of for each $X \in \Sm_S$ a
spectrum $E_X \in \SH(X)$, for each morphism $p: X \to Y \in \Sm_S$ an equivalence
$E_X \wequi p^* E_Y$, for each finite étale morphism $f: U \to V \in \Sm_S$ a
morphism $p_\otimes E_U \to E_V$, and infinitely many coherences among these
data. The forgetful functor $U: \NAlg(\SH(S)) \to \SH(S), E \mapsto E_S$ is
monadic \cite[Proposition 7.6(2)]{bachmann-norms} and in particular
conservative.

Consider the embedding $\phi: \Fin_* \wequi \Span(\Fin, \inj, \all)
\to \Span(\Fin, \all, \all) \to \Span(\Sm_S, \all, \fet)$, where $\inj$
denotes the class of injections and $\Fin \to \Sm_S$ is given by $X
\mapsto \coprod_X S$. This induces a functor $\NAlg(\SH(S)) \to \CAlg(\SH(S))$
which is conservative, monadic and comonadic \cite[Proposition
7.6(3)]{bachmann-norms}. We thus we view the category of normed spectra
$\NAlg(\SH(S))$ as an enhancement of the category $\CAlg(\SH(S))$ of
$E_\infty$-ring spectra.

\subsection*{Effective homotopy modules}

Recall the infinite suspension spectrum functor $\Sigma^\infty_+: \Sm_S \to
\SH(S)$. Let $\SH(S)^\veff \subset \SH(S)$ be the full subcategory
generated under colimits and
extensions by $\Sigma^\infty_+ \Sm_S$. We call this the category of very
effective spectra. Also denote by $\SH(S)^\eff \subset \SH(S)$ the localizing
subcategory generated by $\Sigma^\infty_+ \Sm_S$; this is the category of
effective spectra. By standard results, $\SH(S)^\veff \subset \SH(S)^\eff$ is the
non-negative part of a $t$-structure on $\SH(S)^\eff$
which is called the \emph{effective homotopy
$t$-structure}. We write $\SH(S)^{\eff\heart}$ for its heart and
$\tau_{\le 0}^\eff: \SH(S)^\veff \to \SH(S)^{\eff\heart}$ for the truncation
functor.

Note that if $E \in \SH(S)$ then we have the presheaf $\pi_0(E) \in Ab(\Sm_S), X
\mapsto [\Sigma^\infty_+ X, E]$. Moreover, if $f: X \to Y \in \Sm_S$ is finite
étale, then there is a canonical \emph{transfer map} $tr_f: \pi_0(E)(Y) \to
\pi_0(E)(X)$ \cite[Section 4]{bachmann-real-etale}.
In what follows, we will apply this in particular to $E \in \SH(S)^{\eff\heart}$.

\subsection*{Tambara functors}

We define $\NAlg(\SH(S)^{\eff\heart}) \subset \NAlg(\SH(S))$ to be
the full subcategory on those $E \in \NAlg(\SH(S))$ such that $UE \in
\SH(S)^{\eff\heart}$.
Now suppose that $E \in \NAlg(\SH(S)^{\eff\heart})$. Then the presheaf $\pi_0(E)
\in Ab(\Sm_S)$
acquires \emph{norm maps}. In other words, if $f: X \to Y \in \Sm_S$ is finite étale, then
there is an induced map $N_f: \pi_0(E)(X) \to \pi_0(E)(Y)$. This is a map of
\emph{sets}, not abelian groups; in other words it is not additive. Instead, the
maps $N_f$ satisfy a generalized distributivity condition related to the
transfers $tr_g$ \cite[Corollary 7.21]{bachmann-norms}, making $\pi_0(E)$ into a so-called
\emph{Tambara functor} \cite{tambara1993multiplicative} \cite[Definition 8]{bachmann-gwtimes}. Let us write $T(S)$
for the category of effective homotopy modules $E$ which are provided with norm maps
on $\pi_0(E) \in Ab(\Sm_S)$ in such a way that the distributivity condition is
fulfilled; see Definition \ref{def:tambara-1} below for details. Then we have a
factorisation
\[ \pi_0: \NAlg(\SH(S)^{\eff\heart}) \to T(S) \to Ab(\Sm_S). \]

\subsection*{Main results} For an
additive category $\scr C$ and $e \in \Z_{>0}$,
write $\scr C[1/e]$ for the full subcategory on
those objects $E \in \scr C$ such that $E \xrightarrow{e} E$ is an equivalence. For
example, $Ab(\Sm_S)[1/e]$ is the full subcategory of $Ab(\Sm_S)$ consisting of
those presheaves which are presheaves of $\Z[1/e]$-modules. Write
$\NAlg(\SH(S)^{\eff\heart})[1/e]$ for the full subcategory on those objects $E
\in \NAlg(\SH(S)^{\eff\heart})$ such that $UE \in \SH(S)^{\eff\heart}[1/e]$.
Similarly for $T(S)[1/e]$. With these
preliminaries out of the way, we can state our main results.

\begin{theorem*}[see Corollary \ref{corr:main}]
Let $k$ be a perfect field of exponential characteristic $e \ne 2$.
Then the restricted forgetful functor
\[ \NAlg(\SH(k)^{\eff\heart})[1/e] \to T(k)[1/e] \]
is an equivalence of categories.
\end{theorem*}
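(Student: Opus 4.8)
The plan is to realise both $\NAlg(\SH(k)^{\eff\heart})$ and $T(k)$ as categories of sections of diagrams over $\Span(\Sm_k, \all, \fet)$, and to identify the comparison functor, fibrewise, with the effective truncation $\tau_{\le 0}^\eff$.

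\emph{Step 1 (reduction to the very effective heart).} For $E \in \NAlg(\SH(k))$ one has $E_X \simeq \pi_X^* E_k$ with $\pi_X \colon X \to \mathrm{Spec}\, k$, and since $\pi_X^*$ preserves $\SH(-)^\veff$ and is $t$-exact for the effective homotopy $t$-structure, the hypothesis $UE \in \SH(k)^{\eff\heart}$ forces $E_X \in \SH(X)^{\eff\heart}$ for every $X$. The norm functors $f_\otimes$ preserve very effective spectra and, more precisely, preserve $1$-connectivity for the effective homotopy $t$-structure, so that $\tau_{\le 0}^\eff f_\otimes$ is unchanged under $1$-connective modifications and therefore composes functorially. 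Granting this compatibility of the norms with the effective homotopy $t$-structure, I would assemble a diagram $\SH^{\eff\heart,\otimes} \colon \Span(\Sm_k,\all,\fet) \to \widehat{\Cat}_\infty$, $X \mapsto \SH(X)^{\eff\heart}$, with backward maps $p^*$ and forward maps $\tau_{\le 0}^\eff f_\otimes$, together with a fibrewise Bousfield localisation map $\SH^\otimes \to \SH^{\eff\heart,\otimes}$; passing to sections that are cocartesian over backward arrows, this identifies $\NAlg(\SH(k)^{\eff\heart})$ with the category of cocartesian-over-backward sections of $\SH^{\eff\heart,\otimes}$.

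\emph{Step 2 (the comparison functor is $\pi_0$).} For such a section $E$ one has $E_X \simeq \pi_X^* E_k$, and the structure morphism attached to a finite étale $f\colon U \to V$ in $\Sm_k$ is a map $\tau_{\le 0}^\eff f_\otimes \pi_U^* E_k \to \pi_V^* E_k$ in the ordinary $1$-category $\SH(V)^{\eff\heart}$. Using the adjunction $\tau_{\le 0}^\eff \dashv \iota$ and the identification of the norm of a suspension spectrum with the suspension spectrum of the Weil restriction, $f_\otimes \Sigma^\infty_+ Y \simeq \Sigma^\infty_+ \mathrm{R}_f Y$, I would compute the relevant Hom-sets in the heart and show that such a structure morphism is exactly a system of norm maps on sections of $\pi_0(E) \in Ab(\Sm_k)$, refining the transfers already present on objects of $\SH(-)^{\eff\heart}$; the $2$-simplices and the composition of spans in $\Span(\Sm_k,\all,\fet)$ then translate into the distributivity laws, i.e.\ into the data of Definition \ref{def:tambara-1}. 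Thus the comparison functor is realised as passing to the underlying effective homotopy module together with its induced norms, and the same Hom-computation shows it is full and faithful on the level of sections — faithfulness on morphisms being the remark that a morphism of sections is precisely a morphism of effective homotopy modules compatible with the norm maps.

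\emph{Step 3 (the main obstacle).} What remains — and what I expect to be the real difficulty — is to show that, after inverting $e$, the forward functors $\tau_{\le 0}^\eff f_\otimes$ on $\SH(k)^{\eff\heart}[1/e]$ admit an explicit $1$-categorical description, namely the multiplicative norm operation on sheaves with transfers that enters Definition \ref{def:tambara-1}, so that $\SH^{\eff\heart,\otimes}[1/e]$ becomes equivalent, as a diagram over $\Span(\Sm_k,\all,\fet)$, to the evident $1$-categorical diagram whose cocartesian-over-backward sections are by definition $T(k)[1/e]$; passing to sections then gives the equivalence (and in particular exhibits $\NAlg(\SH(k)^{\eff\heart})[1/e]$ as an ordinary category). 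Equivalently one may compare the free-normed monad with the free-Tambara monad on $\SH(k)^{\eff\heart}[1/e]$ and check that they agree on truncated suspension spectra. This is where the hypotheses are used: $k$ infinite perfect enters through Morel's theory of strictly $\A^1$-invariant (framed) sheaves and the resulting concrete model of $\SH(k)^{\eff\heart}$; inverting the exponential characteristic $e$ enters through the cancellation theorem and the comparison of $\SH(k)^{\eff\heart}[1/e]$ with the $\Z[1/e]$-linear homotopy category in which the norms can actually be evaluated; and $e \ne 2$, i.e.\ $2 \in k^\times$, is needed to control the norm functors along even-degree finite étale covers through the multiplicative transfer on $\underline{\mathrm{GW}}$ — without it one only recovers the weaker statement announced in the abstract.
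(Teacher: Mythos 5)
Your Steps 1--2 reproduce, in outline, what the paper gets from the normed-spectra formalism anyway: the fibrewise localisation $\tau_{\le 0}^{\eff}$ of $\SH^{\otimes}$ (the paper's Propositions \ref{prop:SH-veff-norms} and \ref{prop:nalg-nullification}, quoting \cite[Proposition 13.3]{bachmann-norms}) and the resulting comparison functor $\rho$ sending a normed spectrum in the heart to its underlying effective homotopy module with norms (\cite[Corollary 7.21]{bachmann-norms}). But already in Step 2 you assert more than you prove: that the structure maps $\tau_{\le 0}^{\eff} f_\otimes E_U \to E_V$ of a section, together with the $2$-simplices, translate \emph{exactly} into the finite list of axioms of Definition \ref{def:tambara-1}, and that ``the same Hom-computation'' gives full faithfulness. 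The direction ``section $\Rightarrow$ Tambara functor'' is indeed formal, but the converse -- that the distributivity law plus base change plus functoriality of the set-level norms $N_f: M(X)\to M(Y)$ determine and reconstruct the heart-level structure maps and all their compatibilities -- is not a Hom-computation; it is essentially the combination of Proposition \ref{prop:comparison} ($T^1 \wequi T^2$) and the main theorem itself.

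The decisive gap is Step 3, which you explicitly flag as ``the real difficulty'' and then do not carry out. First, $T(k)[1/e]$ is \emph{not} by definition the category of cocartesian-over-backward sections of an evident $1$-categorical diagram over $\Span(\Sm_k,\all,\fet)$; it is defined by the explicit axioms of Definition \ref{def:tambara-1}, so even granting a $1$-categorical model of the truncated norms you would still have to identify its sections with Tambara functors, and in particular prove essential surjectivity: every Tambara functor lifts, with all coherences, to a normed spectrum. The paper does exactly this by the monad comparison you mention in one sentence -- monadicity of $U$ and $U_2$ over $\HI_0(k)$, preservation of sifted colimits (Lemma \ref{lemm:NAlg-HI-adjoint}, Corollary \ref{corr:T1-sifted-colimits}), and the identification of the free normed effective homotopy module on $\E X[1/e]$ with the free Tambara functor (Theorem \ref{thm:main}) -- and that last identification is the bulk of the work: it needs the colimit formula for the free normed spectrum \cite[Remark 16.25]{bachmann-norms}, the equivalence $\HI_0(k)\wequi\widetilde{\HI}(k)$ (Proposition \ref{prop:HI0-gen-transfer}) to obtain generation of $(\E X)(K)$ by finite étale transfers of pullbacks (Corollary \ref{corr:generation}) -- which is where $char(k)\ne 2$ and inverting $e$ actually enter, via $GW$ being generated by separable transfers and reduction to perfect fields, not via ``even-degree covers'' -- and Tambara's group-completion and localisation theorems for naive Tambara functors to propagate norm-compatibility from those generators to all sections. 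None of this is supplied or replaced by an alternative argument in your proposal, so as it stands it sets up the comparison functor but does not prove the equivalence.
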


Along the way, we establish the following result of independent interest.
To state it, recall the category of generalized motivic complexes
$\widetilde{\DM}(k)$ and the functor $\widetilde M: \SH(k) \to \widetilde{\DM}(k)$
\cite{Deglise17}. Write $\widetilde{\DM}(k)^\eff$ for the localizing subcategory
generated by $\widetilde M(\SH(k)^\eff)$. Then $\widetilde{\DM}(k),
\widetilde{\DM}(k)^\eff$ afford $t$-structures whose definition is completely
analogous to those on $\SH(k), \SH(k)^\eff$. The functors $\widetilde M: \SH(k) \to
\widetilde{\DM}(k)$ and $\widetilde M^\eff: \SH(k)^\eff \to
\widetilde{\DM}(k)^\eff$ are right $t$-exact, and hence induce functors on the
hearts. We then have the following result.

\begin{theorem*}[see Theorem \ref{thm:HI0-gen-transfer}] Let $k$ be an infinite perfect field, $char(k)
\ne 2$. Then the
induced functors
\[ \widetilde M^\heart: \SH(k)^\heart \to \widetilde{\DM}(k)^\heart \]
and
\[ \widetilde M^{\eff\heart}: \SH(k)^{\eff\heart} \to \widetilde{\DM}(k)^{\eff\heart} \]
are equivalences of categories.
\end{theorem*}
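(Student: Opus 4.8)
The plan is to prove that $M^{\eff\heart}$ and $M^\heart$ are mutually inverse to the ``forget the Milnor--Witt transfers'' functors, treating the effective case first and reducing the general case to it by $\Gm$-stabilisation. The functor $M$ is a left adjoint; its right adjoint $\gamma$ (forgetting transfers) is $t$-exact for the homotopy $t$-structures in play --- it does not alter the underlying homotopy sheaves --- and conservative. Hence on hearts one gets an adjunction $M^{\eff\heart} \dashv \gamma^{\eff\heart}$ with $\gamma^{\eff\heart}$ conservative and exact, and likewise $M^\heart \dashv \gamma^\heart$. Since $M$ is also right $t$-exact, $M^{\eff\heart}$ is computed on the heart as $\tau_{\le 0}^\eff \circ M$ and therefore preserves colimits; so does $\gamma^{\eff\heart}$, as $\gamma$ itself preserves colimits (it has a further right adjoint). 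I would then reduce the effective statement to showing that the unit $\id \to \gamma^{\eff\heart} M^{\eff\heart}$ is an equivalence. Granting this, $M^{\eff\heart}$ is fully faithful; it is also essentially surjective because $\widetilde{\DM}(k)^{\eff\heart}$ is generated under colimits by the objects $\tau_{\le 0}^\eff M(\Sigma^\infty_+ X)$, $X \in \Sm_k$, which by right $t$-exactness of $M$ coincide with the images $M^{\eff\heart}(\tau_{\le 0}^\eff \Sigma^\infty_+ X)$ of the generators of $\SH(k)^{\eff\heart}$, and $M^{\eff\heart}$ preserves colimits. Hence $M^{\eff\heart}$ is an equivalence with inverse $\gamma^{\eff\heart}$.

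For the unit, a routine reduction to generators (using that $\id$ and $\gamma^{\eff\heart} M^{\eff\heart}$ are exact and preserve colimits, while $\SH(k)^{\eff\heart}$ is generated under colimits by the sheaves $\tau_{\le 0}^\eff \Sigma^\infty_+ X$, $X \in \Sm_k$) leaves us to prove that the map $\tau_{\le 0}^\eff \Sigma^\infty_+ X \to \gamma^{\eff\heart} \tau_{\le 0}^\eff M(\Sigma^\infty_+ X)$ supplied by the adjunction unit is an isomorphism of sheaves. Writing $\widetilde{H}\Z := \gamma M(\1)$ for the Milnor--Witt motivic cohomology spectrum, the projection formula gives $\gamma M(\Sigma^\infty_+X) \simeq \widetilde{H}\Z \wedge \Sigma^\infty_+ X$, and since smashing with the effectively connective spectrum $\Sigma^\infty_+ X$ preserves effective connectivity, this map is governed entirely by the map $\tau_{\le 0}^\eff(\1) \to \tau_{\le 0}^\eff(\widetilde{H}\Z)$. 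So the whole statement comes down to the assertion that the unit of $\widetilde{H}\Z$ induces an isomorphism of ring objects $\tau_{\le 0}^\eff(\1) \xrightarrow{\ \wequi\ } \tau_{\le 0}^\eff(\widetilde{H}\Z)$ in $\SH(k)^{\eff\heart}$. In the language of sheaves this says that Morel's generalised ($\Gm$-loop) transfer structure carried by an object of $\SH(k)^{\eff\heart}$ \emph{is} a Milnor--Witt transfer structure, necessarily unique by conservativity of $\gamma^{\eff\heart}$; equivalently, the free strictly $\A^1$-invariant sheaf with generalised transfers generated by $X$ agrees with the free homotopy sheaf with Milnor--Witt transfers generated by $X$. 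This is exactly the comparison of Morel's transfers with Déglise--Fasel's Milnor--Witt transfers, and it is what pins down the hypotheses: $k$ infinite perfect with $char(k) \ne 2$ is precisely what is needed for Morel's computation of $\tau_{\le 0}^\eff(\1)$ in terms of unramified Milnor--Witt $K$-theory together with the identification of the two transfer formalisms.

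Finally, to pass from the effective to the general statement: Morel's description of the hearts exhibits $\SH(k)^\heart$ as the result of inverting the contraction functor $(-)_{-1}$ on $\SH(k)^{\eff\heart}$, and $\widetilde{\DM}(k)^\heart$ arises from $\widetilde{\DM}(k)^{\eff\heart}$ in exactly the same way; since $M$ is symmetric monoidal and commutes with $\Gm$-suspension, $M^\heart$ is the functor induced by $M^{\eff\heart}$ on these localisations, hence an equivalence once $M^{\eff\heart}$ is. The whole argument is thus funnelled into the effective unit step, which is where I expect the real difficulty: identifying $\tau_{\le 0}^\eff(\1)$ and $\tau_{\le 0}^\eff(\widetilde{H}\Z)$ with unramified Milnor--Witt $K$-theory, compatibly with the ring structures and with the unit --- equivalently, reconciling Morel's generalised transfers with the Milnor--Witt transfers of Déglise--Fasel. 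Everything else is formal manipulation of $t$-structures and adjunctions.
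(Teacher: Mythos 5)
Your plan inverts the paper's logic: you want to prove the effective statement first, exhibiting the inverse of $M^{\eff\heart}$ as the forgetful functor $\gamma^{\eff\heart}$, and then stabilise. The first genuine gap is that $\gamma^{\eff\heart}$ is not known to exist in the form you use it: you need the right adjoint $U$ of $M$ to carry $\widetilde{\DM}^\eff(k)^\heart$ into $\SH(k)^{\eff\heart}$ and to be $t$-exact for the \emph{effective} homotopy $t$-structures, but whether $U(\widetilde{\DM}^\eff(k)) \subset \SH(k)^\eff$ holds integrally is precisely what the paper flags as unknown (it is only known after inverting the exponential characteristic). This is why the paper never claims the inverse of $M^{\eff\heart}$ is $U^{\eff\heart}$; instead it proves the non-effective equivalence $\SH(k)^\heart \wequi \widetilde{\DM}(k)^\heart$ first and then deduces the effective statement purely from full faithfulness of $i^\heart$ and generation of $\widetilde{\DM}^\eff(k)^\heart$ under colimits, so that no $t$-exactness or effectivity of $U$ is ever needed. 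Your ``conservative, $t$-exact $\gamma^{\eff\heart}$'' is therefore an unjustified foundation, not a routine remark.

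The second gap is at the crux you yourself identify: the assertion that $\tau_{\le 0}^\eff(\1) \to \tau_{\le 0}^\eff(\widetilde{H}\Z)$ is an isomorphism is not a citable comparison theorem in this form. What the literature (Calm\`es--Fasel plus cancellation) provides, and what the paper actually uses, is the statement for the \emph{homotopy} $t$-structure, $\1_{\le 0} \wequi U(\1)_{\le 0}$ with $U(\1)$ connective; translating this into your effective-truncation statement requires effectivity/connectivity facts about $\widetilde{H}\Z$ that are part of what is in doubt. Moreover your reduction via the projection formula $\gamma M(\Sigma^\infty_+ X) \wequi \widetilde{H}\Z \wedge \Sigma^\infty_+ X$ for all smooth $X$ needs either dualizability of $\Sigma^\infty_+ X$ (characteristic $0$, or after inverting $e$) or the identification $\widetilde{\DM}(k) \wequi U(\1)\Mod$, which the paper obtains only after inverting $e$ via compact-rigid generation. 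The proposition is an \emph{integral} statement in characteristic $p > 2$, and the paper handles that by the $2$-versus-$e$ splitting: the $[1/e]$ part via the module-category argument, and the $[1/2]$ part via $W(k)[1/2] = 0$, $\widetilde{\DM}(k)[1/2] \wequi \DM(k)$, and D\'eglise's identification of $\DM(k)^\heart$ with the $\eta = 0$ homotopy modules. Your proposal has no counterpart to this splitting, so even granting the formal reductions it cannot reach the integral statement in positive characteristic; the final passage from the effective to the full heart by ``inverting contractions'' is likewise asserted rather than proved, whereas the paper's deduction goes in the opposite (and easier) direction.
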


The first half of this result was established by other methods in
\cite{ananyevskiy2017framed}.
The second half of this result is particularly noticeable. The reason is that
the heart $\widetilde{\DM}(k)^{\eff\heart}$ has a very explicit description: it
is equivalent to the category $\widetilde{\HI}(k)$ of homotopy invariant
Nisnevich sheaves with generalized transfers.\footnote{There is at the moment no
clear consensus on what to call this category. There are a priori different
definitions of ``generalized transfers'' \cite{morel-friedlander-milnor},
``Milnor--Witt transfers'' \cite{calmes2014finite}, ``equationally framed
transfers'' \cite{voevodsky2001notes}, and ``tangentially framed transfers''
\cite{EHKSY}, but they all coincide for (strictly) homotopy invariant sheaves of
abelian groups. In this article we stick to the first terminology.}

I am confident that this theorem also holds over finite fields, but this would
require certain results which are not yet in the literature.
In any case there are alternative descriptions of $\SH(k)^{\eff\heart}$ which
are good enough for us; see Remark \ref{rmk:infinite} for more details.
Putting these two theorems
together, we obtain the following.\footnote{To treat finite fields $k$
here, ``generalized transfers'' must be taken to mean framed transfers, since
the equivalence with Milnor--Witt transfers has not yet been proved in this case.}

\begin{corollary*}
The category $\NAlg(\SH(k)^{\eff\heart})[1/e]$ is equivalent to the category of
homotopy invariant Nisnevich sheaves with generalized transfers and finite
étale norms distributing over the finite étale transfers.
\end{corollary*}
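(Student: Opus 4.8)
The proof is a matter of splicing together the two theorems stated above with the explicit description of the effective heart of $\widetilde{\DM}(k)$, so I work throughout under the hypotheses of the Main Theorem, namely that $k$ is infinite perfect of exponential characteristic $e \ne 2$; note this forces $char(k) \ne 2$, so that Proposition \ref{prop:HI0-gen-transfer} applies as well. First I would invoke Corollary \ref{corr:main}, which says that $\pi_0$ induces an equivalence $\NAlg(\SH(k)^{\eff\heart})[1/e] \xrightarrow{\ \sim\ } T(k)[1/e]$. Unravelling Definition \ref{def:tambara-1}, an object of $T(k)[1/e]$ is an effective homotopy module $E \in \SH(k)^{\eff\heart}[1/e]$ together with a compatible system of norm maps $N_f \colon \pi_0(E)(X) \to \pi_0(E)(Y)$, one for each finite étale $f \colon X \to Y$ in $\Sm_k$, satisfying the generalized distributivity law coupling the $N_f$ to the finite étale transfers $tr_g$; morphisms are maps of homotopy modules whose induced maps on $\pi_0$ commute with all the $N_f$. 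So the task reduces to re-expressing this datum on the $\widetilde{\HI}(k)$-side.

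Next I would apply Proposition \ref{prop:HI0-gen-transfer}: the functor $M^{\eff\heart}$ is an equivalence $\SH(k)^{\eff\heart} \xrightarrow{\ \sim\ } \widetilde{\DM}(k)^{\eff\heart}$, and the latter is equivalent to $\widetilde{\HI}(k)$, the abelian category of homotopy invariant Nisnevich sheaves with generalized transfers; after $[1/e]$-localization this gives $\SH(k)^{\eff\heart}[1/e] \simeq \widetilde{\HI}(k)[1/e]$, i.e.\ homotopy invariant Nisnevich sheaves of $\Z[1/e]$-modules with generalized transfers. Since $E$ already lies in the heart, the presheaf $\pi_0(E)$ on $\Sm_k$ is simply carried to the underlying sheaf of the corresponding object. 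The two things that must be verified for the remaining structure to transport are (i) that $M^{\eff\heart}$ intertwines the transfer $tr_f$ along a finite étale $f$, as constructed in \cite{bachmann-real-etale}, with the generalized transfer of $\widetilde{\HI}(k)$ restricted along $f$, and (ii) that $M^{\eff\heart}$ is symmetric monoidal, so that it identifies the multiplicative structures on the two hearts. Granting (i) and (ii), a system of norms $N_f$ on $\pi_0(E)$ distributing over the $tr_g$ corresponds exactly to a system of finite étale norms on the associated homotopy invariant Nisnevich sheaf of $\Z[1/e]$-modules with generalized transfers, distributing over the finite étale transfers, since the distributivity law is phrased purely in terms of the additive structure, restrictions, transfers and multiplication, all of which $M^{\eff\heart}$ preserves. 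Composing this equivalence with Corollary \ref{corr:main} yields the claim.

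The hard part is step (i): the equivalence of abelian categories $\SH(k)^{\eff\heart} \simeq \widetilde{\HI}(k)$ is already in hand (through Proposition \ref{prop:HI0-gen-transfer} and the description of $\widetilde{\DM}(k)^{\eff\heart}$), but one still has to see that it matches the two a priori different incarnations of finite étale transfers. I expect this to follow either from the naturality of both transfer structures with respect to $M$, or by reducing to the generators $\Sigma^\infty_+ X$, $X \in \Sm_k$, where the finite étale transfer is the evident covering transfer and can be compared directly; monoidality in (ii) should be a consequence of the monoidality of $M$ together with right $t$-exactness. The remaining bookkeeping — lining up the $[1/e]$-localizations, unwinding $T(k)$, and transporting the distributivity condition — is routine.
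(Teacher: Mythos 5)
Your proposal is correct and takes essentially the same route as the paper, which deduces this corollary simply by combining Corollary \ref{corr:main} with Proposition \ref{prop:HI0-gen-transfer} (i.e.\ the identification $\SH(k)^{\eff\heart} \simeq \widetilde{\DM}^\eff(k)^\heart \simeq \widetilde{\HI}(k)$). Your point (i), matching the finite étale transfers of \cite{bachmann-real-etale} with the restriction of the generalized transfers of $\widetilde{\HI}(k)$, is a compatibility the paper leaves implicit in this deduction, so flagging it is sensible but does not amount to a different argument.
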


\subsection*{Overview of the article}
In Section \ref{sec:background} we introduce some standing assumptions and
notation, beyond the notation already established in this introduction.

In Section \ref{sec:tambara-1} we introduce a first notion of motivic Tambara
functors, called motivic Tambara functors of the first kind.
These are effective homotopy modules $M \in \HI_0(k)$ together with for
each finite étale map $f: X \to Y \in \Sm_k$ a norm map $N_f: M(X) \to M(Y)$,
such that the norms distribute over the finite étale transfers in a suitable
fashion. In the remainder of this section we establish basic structural
properties of the category of motivic Tambara functors of the first kind.

In Section \ref{sec:tambara-2} we introduce a second notion of motivic Tambara
functors, called motivic Tambara functors of the second kind. These are
effective homotopy modules $M \in \HI_0(k)$ together with for every finite étale
morphism $f: X \to Y \in \Sm_k$ and every smooth and quasi-projective morphism
$p: W \to X$ a norm map $N_{f,W}: M(W) \to M(R_f(W))$, where $R_f(W) \in \Sm_Y$
denotes the \emph{Weil restriction} of $W$ along $f$. The norms are again
required to distribute over transfers in a suitable fashion. Note that if $p =
\id: X \to X$, then $R_f X \wequi X$ and so we obtain $N_{f,X}: M(X) \to M(Y)$.
In other words, any motivic Tambara functor of the second kind naturally induces
a motivic Tambara functor of the first kind. The main result of Section
\ref{sec:tambara-2} is that this is an equivalence of categories.

In Section \ref{sec:tambara-naive}, we introduce a third notion of motivic
Tambara functors, called naive motivic Tambara functors. These are just presheaves
of sets $M$ on $\FEt_S$ such that $M(X \coprod Y) \iso M(X) \times M(Y)$,
together with norm and transfer maps for
finite étale morphisms, satisfying a suitable distributivity condition. Here
$\FEt_S$ denotes the category of finite étale $S$-schemes. This definition is
closest to Tambara's original definition. Using one of Tambara's original
results, we
easily show that naive motivic Tambara functors are well-behaved under group
completion and localization. This is used at a key point in the proof of
the main result.

In Section \ref{sec:effective-homotopy-modules} we study in more detail the
category $\HI_0(k)$. Using abstract categorical arguments, we show that if
$char(k) \ne 2$ then $\HI_0(k) \wequi \widetilde{DM}^\eff(k)^\heart$. From this
we deduce that for $X \in \Sm_k$, the effective homotopy module $\E X :=
\ul{\pi}_0(\Sigma^\infty_+ X)_0 \in
\HI_0(k)$ is, in a suitable sense, generated under transfers and pullbacks
by the maps $Y \to X$ for $Y \in \Sm_k$.

In Section \ref{sec:normed-homotopy-modules-I} we introduce yet another notion
of motivic Tambara functors, called normed effective homotopy modules. This is
just the category $\NAlg(\SH(k)^{\eff\heart})$. We construct it more formally,
and establish some of its basic properties.

Finally in Section \ref{sec:normed-homotopy-modules-II} we put everything
together and prove the main theorem. To do so we first note that there is a
canonical functor $\rho: \NAlg(\SH(k)^{\eff\heart}) \to T^2(k)$, where $T^2(k)$
denotes the category of motivic Tambara functors of the second kind. This just
arises from the fact that, by construction, if $M \in
\NAlg(\SH(k)^{\eff\heart})$ then $M$ has certain norm maps, known to distribute
over transfers. Next, we observe that both 
$\NAlg(\SH(k)^{\eff\heart})$ and $T^2(k)$ admit monadic forgetful functors to the
category $\HI_0(k)$ of homotopy modules, and that $\rho$ is compatible with
these forgetful functors. It is thus enough to prove that the induced morphism
of monads is an isomorphism. This reduces to showing that if $X \in \Sm_k$ and
$M$ denotes the free normed effective homotopy module on $\E X$, then $M$ is
also the free motivic Tambara functor of the second kind on $\E X$. We do this
by noting that there is an explicit formula for $M$ as a large colimit, coming
from the identification of the free normed spectrum functor
\cite[Remark 16.25]{bachmann-norms}. From this we can verify the universal
property of $M$ as a motivic Tambara functor of the second kind by an
essentially elementary (but lengthy) computation.

\subsection*{Use of $\infty$-categories}
Throughout, we freely use the language of $\infty$-categories as set out in
\cite{lurie-htt, lurie-ha}. Unless explicitly mentioned otherwise, all
categories are $\infty$-categories, all colimits are homotopy colimits, and so
on. That being said, our main categories of interest are actually $1$-categories
(i.e. equivalent as $\infty$-categories to the nerve of an ordinary category).
In a $1$-category, the $\infty$-categorical notions of colimits etc. reduce to
their classical counterparts; so in many parts of this article the
traditional-sounding language indeed has the traditional meaning.

\subsection*{Acknowledgements}
I would like to thank Marc Hoyois for teaching me essentially everything I know
about $\infty$-categories, extensive discussions on normed spectra, and several
discussions regarding the results in this article. I would further like to thank
Maria Yakerson and an anonymous referee for comments on a draft of this article.

\section{Background and notation}
\label{sec:background}

Throughout, $k$ is a perfect field.

Recall that the objects $\Sigma^\infty_+ X \wedge \Gmp{n} \in \SH(k)$,
$X \in \Sm_k, n \in \Z$ generate the non-negative part of a $t$-structure, known as the
\emph{homotopy $t$-structure} \cite[Section 5.2]{morel-trieste}.
We write $\HI_*(k) \wequi \SH(k)^\heart$ for the category of homotopy modules
\cite[Theorem 5.2]{morel-trieste}.
The functor $i^\heart: \SH(k)^{\eff\heart} \to \SH(k)^\heart$ is fully faithful
\cite[Propositions 4 and 5]{bachmann-very-effective}. We write $\HI_0(k) \subset
\HI_*(k)$ for its essential image, and call it the category of \emph{effective
homotopy modules}.
If $X \in \Sm_k$ then $\Sigma^\infty_+ X \in \SH(k)^{\eff}_{\ge 0}$,
and we denote by $\E{X} \in \HI_0(k) \wequi \SH(k)^{\eff\heart}$ the
truncation. For $M \in \HI_0(k)$ and $X \in \Sm_k$ we abbreviate $\Hom(\E X, M)
=: M(X)$. The functor $\HI_0(k) \to Ab(\Sm_k), M \mapsto (X \mapsto M(X))$
factors through $Ab_{Nis}(\Sm_k)$ and the induced functor $\HI_0(k) \to
Ab_{Nis}(\Sm_k)$ is conservative and preserves limits and colimits (and also
$\HI_0(k)$ has all limits and colimits) \cite[Proposition
5(3)]{bachmann-very-effective}. Moreover, its image consists of unramified
sheaves \cite[Lemma 6.4.4]{morel2005stable}. Here $Ab(\Sm_k)$ denotes the
category of presheaves of abelian groups on $\Sm_k$, and $Ab_{Nis}(\Sm_k)$ the
category of Nisnevich sheaves of abelian groups.

Throughout we will be working with full subcategories $\scr C \subset \Sch_k$ which contain
$Spec(k)$ and are closed under finite étale extensions (and so in particular
finite coproducts). We also denote this condition by $\scr C \subset_\fet \Sch_k$.

Recall that if $f: X \to Y \in \Sm_k$ is a finite étale morphism, then the
functor $\SmQP_Y \to \SmQP_X, T \mapsto T \times_Y X$ has a right adjoint $R_f$
called \emph{Weil restriction} \cite[Theorem
7.6.4]{bosch2012neron}. Here $\SmQP_X$ denotes the category
of smooth and quasi-projective $X$-schemes. In particular if $Z \to X$ is finite
étale, then $Z \to X$ is smooth and affine, so smooth and quasi-projective, so
$R_f Z$ exists.

Recall that if $f: X \to Y \in \Sm_k$ is a finite étale morphism and $M \in
\HI_0(k)$, then there is a canonical \emph{transfer morphism} $tr_f: M(X) \to
M(Y)$. These transfer morphisms are natural in $M$ and $f$
\cite[Section 4]{bachmann-real-etale}.

\section{Motivic Tambara functors of the first kind}
\label{sec:tambara-1}

We now come to the most intuitive definition of a motivic Tambara functor as an
effective homotopy module with norms.

First recall the notion of an \emph{exponential diagram} \cite[Definition
7]{bachmann-gwtimes}: given finite étale morphisms $A \xrightarrow{q} X
\xrightarrow{f} Y$ in $\Sm_k$, the corresponding exponential diagram is
\begin{equation*}
\begin{CD}
X @<q<< A @<e<< X \times_Y R_f A \\
@VfVV   @.      @VpVV     \\
Y  @>{\wequi}>>   R_f X  @<{R_f(q)}<<   R_f A.
\end{CD}
\end{equation*}
Here $e$ is the $X$-morphism corresponding by adjunction to the identity $R_f A
\to R_f A$, and $p$ is the canonical projection.

\begin{definition} \label{def:tambara-1}
Let $\scr C \subset_\fet \Sm_k$.
A $\scr C$-Tambara functor of the first kind consists of an effective homotopy
module $M \in \HI_0(k)$, together with for each $f: X \to Y \in \scr C$ finite
étale a map \emph{of sets} $N_f: M(X) \to M(Y)$ such that:
\begin{enumerate}
\item For $X \in \scr C$ we have $N_{\id_X} = \id_{M(X)}$ and if $X
  \xrightarrow{f} Y \xrightarrow{g} Z$ are finite étale morphisms in $\scr C$,
  then $N_{gf} = N_g \circ N_f$.
\item Given a cartesian square
\begin{equation*}
\begin{CD}
X' @>f'>> Y'  \\
@Vp'VV  @VpVV \\
X  @>f>>   Y
\end{CD}
\end{equation*}
  with $X, Y \in \scr C$ and $p$ finite étale, the following diagram
  commutes
\begin{equation*}
\begin{CD}
M(X') @<f'^*<< M(Y')  \\
@VN_{p'}VV  @VN_pVV \\
M(X)  @<f^*<<   M(Y).
\end{CD}
\end{equation*}
\item Given finite étale morphisms $A \xrightarrow{q} X \xrightarrow{f} Y$ in
  $\scr C$, the following diagram (induced by the corresponding exponential
  diagram) commutes
\begin{equation*}
\begin{CD}
M(X) @<tr_q<< M(A) @>e^*>> M(X \times_Y R_f A) \\
@VN_fVV   @.              @VN_pVV     \\
M(Y)  @>{\wequi}>>   M(R_f X)  @<{tr_{R_f(q)}}<< M(R_f A).
\end{CD}
\end{equation*}
\end{enumerate}
A morphism $\phi: M_1 \to M_2$ of $\scr C$-Tambara functors of the first kind is a morphism of
the underlying effective homotopy modules such that for every $f: X \to Y$
finite étale, the following diagram commutes
\begin{equation*}
\begin{CD}
M_1(X) @>\phi(X)>> M_2(X) \\
@VN_fVV            @VN_fVV \\
M_2(Y) @>\phi(Y)>> M_2(Y).
\end{CD}
\end{equation*}
We denote the category of $\scr C$-Tambara functors of the first kind by
$T_\scr{C}^1(k)$, and we write $U_1: T_\scr{C}^1(k) \to \HI_0(k)$ for the evident
forgetful functor.
\end{definition}

\begin{remark}\label{rmk:norm-inv}
If $f: X \to Y \in \scr C$ is an isomorphism, then condition (2) with $Y'=X'=X$,
$p=f$, $p'=f'=\id$ implies that
$N_f = (f^*)^{-1}$. It follows thus from condition (1) that for $f: X \to Y \in
\scr C$ finite étale, the map $N_f: M(X) \to M(Y)$ is invariant under
automorphisms of $X/Y$.
\end{remark}

\begin{remark}
If $M \in T_\scr{C}^1(k)$ and $X \in \scr C$, then the fold map $\nabla: X \coprod X \to X$
induces a binary operation $N_\nabla: M(X \coprod X) \wequi M(X) \times M(X) \to M(X)$ (the first isomorphism
because $M$ is a sheaf), called multiplication. This operation is commutative by Remark
\ref{rmk:norm-inv}. If $f: \emptyset \to X$ is the unique map\thought{should say somewhere that
this counts as finite étale...}, then $M(\emptyset) = *$ (since $M$ is a sheaf)
and $N_f(*) \in M(X)$ is a unit of this multiplication on $M(X)$ (this follows
from condition (1)).
Condition (3) implies that multiplication distributes over addition in
$M(X)$ (apply it to the sequence of étale maps $X \coprod (X \coprod X)
\xrightarrow{\id \coprod \nabla} X \coprod X \xrightarrow{\nabla} X$).
For this reason we refer to condition (3) as the \emph{distributivity
law}. We thus see that $M(X)$ is naturally a commutative ring.

Condition (2) implies that $M(X \coprod X) \wequi M(X) \times M(X)$ \emph{as
rings}, and condition (1) then implies
that for $f: X \to Y$ finite étale, the map $N_f: M(X) \to M(Y)$ is
multiplicative.
\end{remark}

\begin{remark}
If $\scr C = \FEt_k$, then the above definition coincides with \cite[Definition
8]{bachmann-gwtimes}.
\end{remark}

Here is a basic structural property of the category of $\scr C$-Tambara functors
of the first kind.

\begin{lemma} \label{lemm:T1-pres}
The category $T_\scr{C}^1(k)$ is presentable and the forgetful functor $U_1:
T_\scr{C}^1(k) \to \HI_0(k)$ is a right adjoint.
\end{lemma}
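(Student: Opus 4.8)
The plan is to exhibit $T_\scr{C}^1(k)$ as a category of algebras for an accessible monad on $\HI_0(k)$, or more directly as a limit of accessible categories over accessible functors, from which presentability and the existence of the left adjoint to $U_1$ follow by standard results (Lurie, HTT 5.5.3.x and the Adjoint Functor Theorem).

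First I would set up the forgetful picture. Recall from the excerpt that $\HI_0(k)$ is presentable: it is (equivalent to) $\SH(k)^{\eff\heart}$, and the conservative limit- and colimit-preserving embedding into $Shv_{\Nis}(\Sm_k)$ identifies it with a presentable category (indeed an abelian category with a set of generators $\E X$). The data of a $\scr C$-Tambara functor of the first kind over a given $M \in \HI_0(k)$ is: for each finite étale $f \colon X \to Y$ in $\scr C$ a map of \emph{sets} $N_f \colon M(X) \to M(Y)$, subject to the equational conditions (1), (2), (3). So I would describe $T_\scr{C}^1(k)$ as the (strict, $1$-categorical) pullback
\[
T_\scr{C}^1(k) \simeq \HI_0(k) \times_{\Fun(\scr F, \Spc)} \Fun(\scr F', \Spc),
\]
where $\scr F$ is the (small) indexing diagram recording the underlying sets $M(X)$ together with the pullback maps $f^*$ and the transfers $tr_f$ for finite étale $f$, and $\scr F'$ is the larger diagram that in addition records the norm maps $N_f$ and imposes the relations (1)–(3). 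Concretely: the functor $\HI_0(k) \to \Fun(\scr F, \Set)$ sending $M$ to the system $(X \mapsto M(X))$ equipped with $f^*$ and $tr_f$ is accessible (it is built from the representable functors $M \mapsto M(X) = \Hom(\E X, M)$, each of which preserves filtered colimits since $\E X$ is a compact-ish generator — more precisely $\Hom(\E X, -)$ commutes with filtered colimits in $\HI_0(k)$ because filtered colimits in the heart are computed sheaf-theoretically and $\E X$ is the truncation of a compact object), and the category $\Fun(\scr F', \Set)$ of $\scr F'$-diagrams of sets satisfying the equational constraints (1)–(3) is itself presentable since those constraints are finite-limit conditions on the (presentable) functor category. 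A pullback of presentable categories along accessible functors preserving limits is again presentable (HTT 5.5.3.18), and since the two legs preserve all limits and filtered colimits, $U_1$ preserves limits and is accessible, hence is a right adjoint by the Adjoint Functor Theorem (HTT 5.5.2.9).

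Alternatively, and perhaps more transparently, I would realize $T_\scr{C}^1(k)$ as $\mathrm{Alg}_{\scr O}(\HI_0(k))$ for a suitable "algebraic theory" or coloured operad $\scr O$ encoding an object of $\HI_0(k)$ together with the extra unary operations $N_f$ and the relations; the monadicity machinery then gives a monad $F \dashv U_1$ on $\HI_0(k)$ with $F$ automatically accessible, and presentability of the algebra category follows from presentability of the base (this is the viewpoint echoed in Section~\ref{sec:normed-homotopy-modules-II}, where $U_1$ is stated to be monadic). Either route is routine once the accessibility of the "evaluation-with-transfers" functor $\HI_0(k) \to \Fun(\scr F, \Set)$ is in hand.

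\textbf{The main obstacle} is the interaction between set-valued norm operations and the additive/abelian structure of $\HI_0(k)$: the $N_f$ are maps of sets, not of abelian groups, so $T_\scr{C}^1(k)$ is genuinely a category of "many-sorted algebras with non-additive operations" over an abelian base, and one has to be slightly careful that the relevant diagram categories (sets with the equational constraints (1)–(3), including the nontrivial exponential/distributivity relation (3) which mixes $N$, $tr$, and $f^*$ across the Weil-restriction square) really are presentable and that the forgetful functors really do preserve filtered colimits — i.e. that $M(X) = \Hom(\E X, M)$ commutes with filtered colimits in the heart. This last point is where I expect to spend the most care: it should follow from $\E X$ being the effective-homotopy-truncation of the compact object $\Sigma^\infty_+ X$ together with right $t$-exactness of truncation and the fact that filtered colimits are $t$-exact on $\SH(k)^{\eff}$, but it deserves an explicit lemma or citation.
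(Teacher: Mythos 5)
Your proposal is correct and follows essentially the same route as the paper: both realize $T^1_{\scr C}(k)$ as a pullback of presentable categories over a presheaf-type category encoding pullbacks, transfers and norms, and then invoke the fact that such a pullback is a limit in $Pr^R$ (HTT 5.5.3.18) together with the adjoint functor theorem to get presentability of $T^1_{\scr C}(k)$ and the left adjoint to $U_1$. The only cosmetic differences are that the paper encodes the norm/transfer structure via an explicit bispan category $\scr D$ (à la Tambara--Strickland) and exhibits explicit left adjoints for the two legs (left Kan extension, sheafification, and $s^\heart$ using $t$-exactness of $g_1$), whereas you encode it as an essentially algebraic diagram category and verify that the legs are accessible and limit-preserving, which amounts to the same thing.
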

\begin{proof}\thought{Is there an abstract nonsense proof?}
We first construct auxiliary categories $\scr D$ and $\scr D'$. The objects
of both $\scr D$ and $\scr D'$ are
objects of $\scr C$. For $X, Y \in \scr C$, the morphisms from $X \to Y$ in
$\scr D'$ are given by equivalence classes \emph{spans}, i.e. diagrams $X
\xleftarrow{f} T \to Y$, where $f$ is required to be finite étale.
In other words $\scr D'$ is just the homotopy $1$-category of the bicategory
$\Span(\scr C, \fet, \all)$.

The morphisms from $X \to Y$ in
$\scr D'$ are given by equivalence classes
of \emph{bispans}, i.e.
diagrams $X \xleftarrow{f} T_1 \xleftarrow{g} T_2 \xrightarrow{p} Y$, where $f$
and $g$ are required to be finite étale. We shall identify two bispans if they
fit into a commutative diagram
\begin{equation*}
\begin{CD}
X @<f_1<< T_1 @<g_1<< T_2 @>p_1>> Y \\
@|        @VaVV       @VbVV      @| \\
X @<f_2<< T_1' @<g_2<< T_2' @>p_2>> Y
\end{CD}
\end{equation*}
with $a, b$ isomorphisms. If $f: X \to Y \in \scr C$ then we denote the bispan
$X \xleftarrow{\id} X \xleftarrow{\id} X \xrightarrow{f} Y$ by $\rho_f$. If $f:
X \to Y \in \scr C$ is finite étale, we denote the bispan $X \xleftarrow{f} Y
\xleftarrow{\id} Y \xrightarrow{\id} Y$ by $\tau_f$ and we denote the bispan $X
\xleftarrow{\id} X \xleftarrow{f} Y \xrightarrow{\id} Y$ by $\nu_f$.

Before explaining composition in $\scr D$, let us explain what the category
$\scr D$ is supposed to do. We will have a functor $F: \scr D' \to \scr D$ which is
the identity on objects and sends $X \xleftarrow{g} T \xrightarrow{f} Y$ to
$\rho_f \circ \tau_g$. This induces $F^*:
PSh(\scr D) \to PSh(\scr D')$. The objects in $PSh(\scr D)$ are going to be
``presheaves with norm and transfer'' in the following sense. Let $G: \HI_0(k)
\to PSh(\scr D)$ denote the forgetful functor. Then we have a
cartesian square of $1$-categories
\begin{equation} \label{eq:tambara-cart-square}
\begin{CD}
PSh(\scr D) @<<< T_\scr{C}^1(k) \\
@VF^*VV           @VU_1VV       \\
PSh(\scr D') @<G<< \HI_0(k).
\end{CD}
\end{equation}
The category $\HI_0(k)$ is presentable, being an accessible localization of the
presentable category $\SH(k)^\veff$. The categories $PSh(\scr D)$ and $PSh(\scr
D')$ are of course presentable. The functor $F^*$ has a left adjoint, given by
left Kan extension. The functor $G$ also has a left adjoint; indeed it is a
functor between presentable categories which preserves limits and filtered
colimits (see Lemma \ref{lemm:evX-sifted-colim} below), so the claim follows
from the adjoint functor theorem \cite[Corollary Corollary
5.5.2.9(2)]{lurie-htt}.
It follows that
$F^*$ and $G$ are morphisms in $Pr^R$. Thus the square is also a pullback in
$Pr^R$ \cite[Theorem 5.5.3.18]{lurie-htt}, and in particular $T_\scr{C}^1(k)$ is
presentable and $U_1$ is a right adjoint.

It remains to finish the construction of $\scr D$. The composition in $\scr D$
is determined by the following properties: (1) if $\alpha = (X \xleftarrow{f}
T_1 \xleftarrow{g} T_2 \xrightarrow{p} Y)$ is a bispan, then $\alpha = \rho_p
\nu_g \tau_f$. (2) if $X \xrightarrow{f} Y \xrightarrow{g} Z \in \scr C$, then
$\rho_{gf} = \rho_g \rho_f$. If $f, g$ are finite étale then $\tau_{gf} =
\tau_f \tau_g$ and $\nu_{gf} = \nu_f \nu_g$. (3) The $\tau$ and $\nu$ morphisms
satisfy the basechange law with respect to the $\rho$ morphisms. (4) the
distributivity law holds. For a more detailed construction of similar categories,
see \cite[Section 5, p. 24 and Proposition 6.1]{strickland2012tambara}.
\end{proof}

\begin{remark}
The cartesian square \eqref{eq:tambara-cart-square} can be used to elucidate the
nature of motivic Tambara functors of the first kind: the category is equivalent
to the category of triples $(T, M, \alpha)$ where $T$ is a presheaf on a certain bispan
category $\scr D$, $M$ is an effective homotopy module, and
$\alpha$ is an isomorphism between the presheaves with finite étale transfers underlying $T$ and
$M$. In fact, if $char(k) = 0$ then one may show that the functor
$\HI_0(k) \to PSh(\scr D')$ is fully faithful (use \cite[Corollary
5.17]{bachmann-moving} and \cite[paragraph before Proposition
22]{bachmann-gwtimes}), whence so is $T_\scr{C}^1(k) \to PSh(\scr D)$.
We deduce that in this situation the category $T_\scr{C}^1(k)$ has a
particularly simple description: it consists of presheaves on the bispan
category $\scr D$ such that the underlying presheaf with finite étale transfers
extends to an effective homotopy module (in particular, is a strictly homotopy
invariant sheaf).
\end{remark}

We immediately deduce the following.

\begin{corollary} \label{corr:T-1-limits-colimits}
The category $T_\scr{C}^1(k)$ has all (small) limits and colimits.
\end{corollary}

Recall now that if $F$ is a presheaf on a category $\scr D$, then $F$ extends
uniquely to a continuous presheaf on $Pro(\scr D)$, the category of pro-objects. Moreover,
consider the subcategory $\EssSm_k \subset \Sch_k$ on those schemes which can
be obtained as cofiltered limits of smooth $k$-schemes along diagrams with
affine transition morphisms. Then $\EssSm_k$ \emph{embeds} into $Pro(\Sm_k)$
\cite[Proposition 8.13.5]{EGAIV}, and consequently for $X \in \EssSm_k$ the expression $F(X)$
makes unambigious sense, functorially in $X$\thought{perhaps this should be
earlier}.  
It follows in particular that Definition \ref{def:tambara-1} makes sense more
generally for $\scr C \subset_\fet \EssSm_k$.

Let $\scr C \subset \Sch$. Write $\scr C^{sl}$ for the subcategory of $\Sch$ on
those schemes obtained as semilocalizations of schemes in $\scr C$ at finitely
many points. We write $\scr C \subset_{\fet,\op} \Sch$ to mean
that $\scr C \subset_\fet \Sch$ and $\scr C$ is closed under passage to open
subschemes.

A convenient property of the category $T_\scr{C}^1(k)$ is that, in reasonable
cases, it is invariant under replacing $\scr C$ by $\scr C^{sl}$:

\begin{proposition} \label{prop:T1-sl-equiv}
Let $\scr C \subset_{\fet,\op} \Sm_k$. Then
$\scr C^{sl} \subset_\fet \EssSm_k$ and the canonical forgetful functor $T_{\scr
C}^1(k) \to T_{\scr C^{sl}}^1(k)$ is an equivalence of categories.
\end{proposition}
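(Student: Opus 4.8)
The plan is to construct the functor in both directions and exhibit them as mutually inverse. First I would unwind what the forgetful functor $T_{\scr C}^1(k) \to T_{\scr C^{sl}}^1(k)$ actually is: an object of $\scr C^{sl}$ is a semilocalization $X_{x_1,\dots,x_n}$ of some $X \in \scr C$, which lies in $\EssSm_k \hookrightarrow Pro(\Sm_k)$, so the underlying effective homotopy module $M \in \HI_0(k)$, viewed as an unramified Nisnevich sheaf, already has well-defined values $M(X_{x_1,\dots,x_n})$ by the pro-extension discussed just before the statement; the transfers extend similarly. The content of the proposition is that (a) the finite étale norms also extend uniquely along these pro-limits, so that the forgetful functor is well-defined and faithful, and (b) a Tambara structure on $\scr C^{sl}$ restricts back to one on $\scr C$ and this is inverse to the extension. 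Since $M$ is already determined as a functor on $\EssSm_k$, the only real issue is the norm maps $N_f$.

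The key observation I would use is that every finite étale morphism $f: X' \to X$ with $X = X_{x_1,\dots,x_n} \in \scr C^{sl}$ the semilocalization of some $X_0 \in \scr C$ in finitely many points spreads out: there is a finite étale $f_0: X_0' \to U_0$ over some open $U_0 \subset X_0$ containing the $x_i$ — and since $\scr C \subset_{\fet,\op} \Sm_k$, both $U_0$ and $X_0'$ lie in $\scr C$ — with $f$ obtained from $f_0$ by the base change $X \to U_0$. Then I would \emph{define} $N_f$ on $M(X')$ by taking the colimit over all such spreadings-out of the maps $N_{f_0}: M(X_0') \to M(U_0)$. To see this is well-defined and independent of choices, I would use that any two spreadings-out agree after passing to a smaller open, which is the standard cofinality argument for essentially smooth schemes; uniqueness of the extension is then automatic from the universal property, which also shows that on the nose $T_{\scr C}^1(k) \to T_{\scr C^{sl}}^1(k)$ is faithful and full (a morphism of Tambara functors on $\scr C$, being compatible with all norms $N_{f_0}$, is compatible with the colimit norms, and conversely).

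For essential surjectivity I would start with $M \in T_{\scr C^{sl}}^1(k)$; its underlying homotopy module is in $\HI_0(k)$, and I need to produce norm maps $N_f: M(X) \to M(Y)$ for \emph{all} finite étale $f: X \to Y \in \scr C$ (not just semilocal ones) reconstructing the given structure after restriction. The point is that an unramified sheaf $M$ is determined by its values on semilocal schemes and its behaviour under generization, and a section over $Y$ is detected by its restrictions to the semilocalizations of $Y$ at points of each connected component; one builds $N_f(a) \in M(Y)$ by specifying its restriction to each such semilocalization $Y_S$, namely applying the already-available norm for the base-changed map $X \times_Y Y_S \to Y_S$ (which lies over an object of $\scr C^{sl}$). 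To check this glues to a well-defined section one uses that $M$ is a sheaf and that these local norms are compatible under further generization — which follows from base-change compatibility (condition (2)) applied within $\scr C^{sl}$. The axioms (1)–(3) for the reconstructed $N_f$ then follow because they may be checked after restriction to semilocalizations, where they hold by hypothesis, again using that $M$ is separated as a Nisnevich sheaf.

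The main obstacle I anticipate is verifying that the colimit/restriction construction for the norms is genuinely independent of all the spreading-out choices and interacts correctly with the distributivity law (condition (3)): the exponential diagram involves a Weil restriction $R_f A$, and one must check that forming the exponential diagram commutes with the relevant base changes and pro-limits, i.e. that Weil restriction along a spread-out finite étale map, base-changed to a semilocalization, recovers the Weil restriction over the semilocal base. This is a compatibility of $R_f$ with filtered limits of the base with affine transition maps, which should hold by the explicit construction of Weil restriction (it is a filtered colimit away from issues that vanish in the finite étale case), but tracking it through the exponential diagram is where the bookkeeping is heaviest. Everything else — the sheaf-theoretic gluing, the cofinality of opens containing a finite set of points, the faithfulness — is routine given the pro-extension machinery already set up in the excerpt.
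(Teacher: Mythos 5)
Your proposal follows essentially the same route as the paper: spread out finite étale maps over semilocal schemes via EGA IV to get $\scr C^{sl} \subset_\fet \EssSm_k$ and the continuity-extended norms, use unramifiedness of effective homotopy modules to detect and reconstruct norms from semilocalizations (the paper works with the generic point and the codimension-one localizations), and verify the axioms after restriction to small bases using the base-change formula together with the fact that Weil restriction commutes with base change. One caveat: fullness is not ``automatic from the universal property'' of the colimit, and ``separated as a Nisnevich sheaf'' is not the right hypothesis either — both steps need the injectivity of $M(U)\to M(\eta)$ supplied by unramifiedness (this is exactly the content of Lemma \ref{lemm:T1-gen-full} in the paper), which you do invoke correctly in the essential-surjectivity step.
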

In the proof, we shall make use of the unramifiedness property of homotopy
modules \cite[Lemma 6.4.4]{morel2005stable}: if $X \in \Sm_k$
is connected and $\emptyset \ne U \subset X$, then $M(X) \to M(U)$ is injective.
In particular, if $\eta$ is the generic point of $X$, then $M(X) \hookrightarrow
M(\eta)$.
\begin{proof}
Let $X \in \scr C^{sl}$ and $f: Y \to X$ finite étale. Then $X$ is a cofiltered
limit along open immersions, so there exists a cartesian square
\begin{equation*}
\begin{CD}
Y  @>i'>> V \\
@VfVV  @Vf'VV \\
X  @>i>> U \\
\end{CD}
\end{equation*}
with $U \in \scr C$, $i$ an open immersion and $f'$ finite étale \cite[Théorèmes
8.8.2(ii) and 8.10.5(x), and Proposition 17.7.8(ii)]{EGAIV}. It follows
that $V \in \scr C$, and $Y$ is a cofiltered limit (intersection) of open
subschemes of $V$. Since $Y \to X$ is finite (so in particular closed and
quasi-finite \cite[Tags 01WM and 02NU]{stacks-project}), $Y$ is semilocal, and so must be a
semilocalization of $V$. This proves the first claim.

Note that $T^1_{\scr C}(k) \to T^1_{\scr C^{sl}}(k)$ is full. Indeed if $M_1,
M_2 \in T^1_{\scr C}(k)$ and $\alpha: U_1 M_1 \to U_1 M_2$ is a morphism of the
underlying homotopy modules, compatible with the norms on semilocal schemes,
then it is compatible with the norms on generic points, and hence it is
compatible with all norms, by Lemma \ref{lemm:T1-gen-full} below.

The functor $T^1_{\scr C}(k) \to T^1_{\scr C^{sl}}(k)$
is also faithful, since $U_1: T^1_\scr{C}(k) \to
\HI_0(k)$ and $U_1^{sl}: T^1_{\scr C^{sl}}(k) \to \HI_0(k)$ are. It remains to
show that it is essentially surjective.

Thus let $M \in T^1_{\scr C^{sl}}(k)$. Let $p: X \to Y \in \scr C$ be finite
étale. We need to construct a norm $N_p: M(X) \to M(Y)$. We may assume that $Y$
is connected. Let $\eta$ be the generic point of $Y$. We are given a norm map
$N_p^\eta: M(X_\eta) \to M(\eta)$. By unramifiedness (and compatibility of norms
with base change), there is at most one map
$N_p$ compatible with $N_p^\eta$. What we need to show is that $N_p^\eta(M(X))
\subset M(Y)$. In order to do this, by unramifiedness again, it suffices to
prove this for $Y$ replaced by the various localizations of $Y$ at its points of
codimension one. But then $Y$ is semilocal, so the result holds by assumption.

It remains to show that these norms turn $M$ into a $\scr C$-motivic Tambara
functor of the first kind. The base change formula (i.e. condition (2) of
Definition \ref{def:tambara-1}) is satisfied by construction. It implies using
unramifiedness of $M$ that it is enough to check conditions (1) and (3) when the
base is a field (use that Weil restriction commutes with arbitrary base change
\cite[Proposition A.5.2(1)]{conrad2015pseudo}), in which case they hold by assumption.
\end{proof}

\begin{remark} \label{rmk:T1-C-comp}
It follows that if $\scr C_1, C_2 \subset_{\fet,\op} \Sm_k$ such that $\scr
C_1^{sl} = \scr C_2^{sl}$, then $T^1_{\scr C_1}(k) \wequi T^1_{\scr C_1}(k)$.
This applies for example if $\scr C_1 = \Sm_k$ and $\scr C_2 = \SmQP_k$.
\end{remark}

In the course of the proof of Proposition \ref{prop:T1-sl-equiv}, we used the
following lemma of independent interest.  
Let $\scr C \subset \Sch$. Write $\scr C^{gen}$ for the full subcategory of
$\Sch$ consisting of the subschemes of generic points of schemes in $\scr C$.
\begin{lemma} \label{lemm:T1-gen-full}
Let $\scr C \subset_{\fet,\op} \Sm_k$. Then $\scr C^{gen} \subset_\fet
\EssSm_k$.

Let $F,
G \in T^1_\scr{C}(k)$ and let $\alpha \in \Hom_{\HI_0(k)}(F,G)$ be a morphism of
the underlying homotopy modules. If $\alpha \in
\Hom_{T^1_{\scr{C}^{gen}}(k)}(F,G)$, then $\alpha \in
\Hom_{T^1_{\scr{C}}(k)}(F,G)$. If $U_1G \in \HI_0(k)[1/e]$ where $e$ is the
exponential characteristic of $k$, then the above criterion we may replace $\scr
C^{gen}$ by $\scr C^{gen,perf}$, consisting of the perfect closures of objects
in $\scr C^{gen}$.
\end{lemma}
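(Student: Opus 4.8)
The plan is to reduce a statement about compatibility with \emph{all} norms to compatibility with norms \emph{over generic points}, exploiting the unramifiedness of effective homotopy modules (recalled in Section \ref{sec:background}). First I would establish the assertion $\scr C^{gen} \subset_\fet \EssSm_k$: any subscheme of a generic point $\eta$ of some $Y \in \scr C$ is the spectrum of a localization of a finite product of fields, each of which is the residue field at a generic point of $Y$; writing $Y$ as a cofiltered intersection of its open subschemes containing $\eta$ (which all lie in $\scr C$ since $\scr C$ is closed under opens) exhibits $\eta$, and likewise any finite étale extension of it, as an object of $\EssSm_k$. Closure under finite étale extensions follows because a finite étale scheme over a field is again a finite product of fields, and by the same spreading-out argument as in the proof of Proposition \ref{prop:T1-sl-equiv} (via \cite{EGAIV}) a finite étale cover of a generic point of some $Y \in \scr C$ spreads out to a finite étale cover of an open neighbourhood, hence is itself a generic point of a scheme in $\scr C$.

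Next, for the main compatibility claim, let $f\colon X \to Y \in \scr C$ be finite étale and consider the square expressing compatibility of $\alpha$ with $N_f$. Both $M(Y) = F(Y)$ and $M'(Y) = G(Y)$ inject into the product of the stalks $F(\eta)$, $G(\eta)$ over the generic points $\eta$ of $Y$, by unramifiedness; and the norm maps $N_f$ on $F$ and $G$ are, by condition (2) of Definition \ref{def:tambara-1} (base change along $\eta \hookrightarrow Y$), compatible with restriction to these generic fibres, i.e.\ $N_f$ followed by restriction to $\eta$ equals $N_{f_\eta}\colon F(X_\eta) \to F(\eta)$ followed by nothing, precomposed with restriction $F(X) \to F(X_\eta)$. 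Hence for $x \in F(X)$ the two elements $\alpha(N_f(x))$ and $N_f(\alpha(x))$ of $G(Y)$ have the same image in every stalk $G(\eta)$ — precisely because $\alpha$ is assumed compatible with the generic norms $N_{f_\eta}$ and is a morphism of homotopy modules, hence commutes with restriction to $\eta$. By the injectivity coming from unramifiedness they are equal, so $\alpha$ is compatible with $N_f$. Since $f$ was arbitrary, $\alpha \in \Hom_{T^1_{\scr C}(k)}(F,G)$.

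For the final, refined assertion, the point is that when $U_1 G \in \HI_0(k)[1/e]$ the restriction map $G(\eta) \to G(\eta^{perf})$ from the residue field to its perfection is injective, so it suffices to know that $\alpha$ is compatible with norms over the perfect closures $\eta^{perf}$ of the generic points, rather than over the $\eta$ themselves. Concretely, $\eta^{perf}$ is obtained from $\eta$ by a (possibly infinite) purely inseparable extension, which for homotopy modules inverted at $e$ induces an isomorphism on sections (the transfer/restriction along a finite purely inseparable extension of degree a power of $e = \operatorname{char}(k)$ is multiplication by that degree, hence invertible after inverting $e$, and one passes to the colimit); so $G(\eta) \to G(\eta^{perf})$ is an isomorphism, and in particular injective, and the argument of the previous paragraph goes through with $\scr C^{gen}$ replaced by $\scr C^{gen,perf}$. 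I expect the main technical obstacle to be pinning down this last injectivity/isomorphism statement for sections of homotopy modules along purely inseparable field extensions after inverting the exponential characteristic — i.e.\ checking that the relevant continuity (compatibility of $\HI_0(k)$-sections with cofiltered limits of fields) and the behaviour of transfers along infinitesimal extensions are available in the references cited; the rest is a routine diagram chase powered by unramifiedness.
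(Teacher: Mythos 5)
Your proposal is correct and follows essentially the same route as the paper: spreading out (as in Proposition \ref{prop:T1-sl-equiv}) for the claim $\scr C^{gen} \subset_\fet \EssSm_k$, then injectivity of $G(Y) \to G(Y^{(0)})$ by unramifiedness combined with the base change formula to reduce to generic points, and finally injectivity of $G(\eta) \to G(\eta^{perf})$ after inverting $e$. The only difference is at that last step, where the paper simply cites \cite[Lemma 17]{bachmann-hurewicz} while you sketch the underlying transfer-along-purely-inseparable-extensions argument (note the composite is multiplication by a $GW$-class of rank equal to the degree rather than by the integer itself, which is exactly what the cited lemma takes care of), and you correctly flag this as the point to pin down.
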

\begin{proof}
The first claim is proved exactly as in the proof of Lemma
\ref{prop:T1-sl-equiv}. Suppose given $\alpha$ with the claimed properties. We
need to show that, if $p: X \to Y \in \scr C$ is finite étale, then the
following diagram commutes
\begin{equation*}
\begin{CD}
F(X) @>\alpha>> G(X) \\
@VN_pVV         @VN_pVV \\
F(Y) @>\alpha>> G(Y).
\end{CD}
\end{equation*}
Let $Y^{(0)}$ denote the set of generic points of $Y$. Then $G(Y) \to
G(Y^{(0)})$ is injective, by unramifiedness.
The base change formula thus allows us to assume that $Y \in \scr C^{gen}$. It
follows that $X \in \scr C^{gen}$, and so the diagram commutes by definition.

For the last claim, we use that if $X \in \scr C^{gen}$ has perfect closure
$X'$, then $G(X) \to G(X')$ is injective \cite[Lemma 17]{bachmann-hurewicz}.
\end{proof}

Corollary \ref{corr:T-1-limits-colimits} assures us that $T^1_\scr{C}(k)$ has
all limits and colimits. In the final part of this section,
we wish to investigate how these limits and
colimits are computed. We begin with the case of homotopy modules:

\begin{lemma} \label{lemm:evX-sifted-colim}
Let $X \in \EssSm_k$. The functor $ev_X: \HI_0(k) \to Ab, F \mapsto F(X)$ preserves
finite limits and filtered colimits.
If $X \in \Sm_k^{sl}$, then the functor $ev_X$ preserves arbitrary
colimits as well, whereas if $X \in \Sm_k$, it preserves arbirary limits.
\end{lemma}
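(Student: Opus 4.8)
The plan is to reduce everything to known facts about the conservative, limit- and colimit-preserving functor $\HI_0(k) \to Shv_{Nis}(\Sm_k)$ from \cite[Proposition 5(3)]{bachmann-very-effective}, together with the fact that the sheaves in its image are unramified. First, for the part about finite limits and filtered colimits: since $ev_X$ on $\HI_0(k)$ factors (for $X \in \Sm_k$) as evaluation at $X$ on $Shv_{Nis}(\Sm_k)$ precomposed with the limit- and colimit-preserving embedding, and evaluation at a representable on a sheaf category preserves finite limits and filtered colimits (these are computed at the level of presheaves, where evaluation is exact, and sheafification preserves finite limits and filtered colimits), we get the claim for $X \in \Sm_k$. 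For general $X \in \EssSm_k$, write $X = \lim_i X_i$ as a cofiltered limit of smooth $k$-schemes along affine transition maps; then $F(X) = \colim_i F(X_i)$ by the continuity discussed before Lemma \ref{lemm:T1-gen-full}, and a filtered colimit of functors preserving finite limits and filtered colimits again has that property.

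Next, the claim that $ev_X$ preserves arbitrary limits when $X \in \Sm_k$: this is because $X$ is then a compact object. Indeed, $\E X = \tau^\eff_{\le 0}\Sigma^\infty_+ X$ and $\Sigma^\infty_+ X$ is compact in $\SH(k)^\veff$; applying the truncation $\tau^\eff_{\le 0}$, which is a left adjoint, and using that $\SH(k)^{\eff\heart}$ is closed under the relevant (co)limits, one sees $\E X$ is compact — more precisely, $\Hom(\E X, -) = \Hom(\Sigma^\infty_+ X, i^\heart -)$ and the right-hand side commutes with all limits since $i^\heart$ does and $\Sigma^\infty_+ X$ corepresents an exact functor. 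Actually the cleanest route: $ev_X = \Hom_{\HI_0(k)}(\E X, -)$ always preserves limits, so this part is automatic for any $X$ for which $\E X$ makes sense as an object of $\HI_0(k)$, i.e. for $X \in \Sm_k$; I would state it via this representability.

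Finally, the key point — and the one I expect to be the main obstacle — is that $ev_X$ preserves \emph{arbitrary colimits} when $X$ is semilocal. The issue is that sheafification does not in general commute with infinite colimits after evaluating at a fixed scheme, so one cannot just quote the sheaf-category formalism. The idea is to use that on a semilocal scheme $X = Spec(\mathcal{O})$, every Nisnevich cover splits (the semilocal ring is Henselian along... no — rather: a Nisnevich cover of a semilocal scheme admits a section, since it induces an isomorphism on residue fields at each closed point which lifts), so that for an unramified sheaf $F$, the value $F(X)$ is computed as a presheaf colimit, i.e. $ev_X$ is exact at the presheaf level and does not see the sheafification. Concretely, I would argue: a colimit in $\HI_0(k)$ is computed by taking the colimit of the underlying Nisnevich sheaves, which is the sheafification of the presheaf colimit; but evaluating an unramified-sheaf-valued presheaf colimit at a semilocal $X$ already gives a sheaf (unramifiedness lets one check the sheaf condition, and Nisnevich covers of $X$ split), so sheafification changes nothing on $X$. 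Hence $ev_X$ of the colimit equals the colimit of $ev_X$. The careful statement I need is: the presheaf colimit of unramified sheaves, evaluated at a semilocal scheme, agrees with its sheafification there; this should follow from the description of sections of unramified sheaves over semilocal schemes in terms of generic points \cite[Lemma 6.4.4]{morel2005stable} together with the splitting of Nisnevich covers, and I would spell this out as the one genuine lemma inside the proof.
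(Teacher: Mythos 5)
The first parts of your argument (finite limits and filtered colimits via the limit- and colimit-preserving embedding into $Shv_{\Nis}(\Sm_k)$, continuity for $X \in \EssSm_k$, and arbitrary limits for $X \in \Sm_k$ via representability of $ev_X$ by $\E X$) are correct and essentially the paper's argument.

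The key step, however, has a genuine gap: your claim that every Nisnevich cover of a semilocal scheme splits is false. Lifting an isomorphism of residue fields at the closed points to a section requires the local rings to be \emph{henselian}, which the semilocalizations of smooth schemes occurring here are not. Concretely, for $X = Spec(\mathcal{O}_{\A^1_\Q,0})$ the cover consisting of the open complement of the closed point together with $Spec(\mathcal{O}_{\A^1_\Q,0}[y]/(y^2-(1+x)))$ is a Nisnevich cover admitting no section, since $1+x$ is not a square in $\mathcal{O}_{\A^1_\Q,0}$. Consequently the assertion that a presheaf colimit of unramified sheaves, evaluated at a semilocal $X$, already agrees with its sheafification is not a formal consequence of unramifiedness plus splitting of covers, and the proposed argument that $ev_X$ preserves arbitrary colimits breaks down exactly at the surjectivity statement it is meant to deliver (exactness of $G(X) \to H(X)$ for an epimorphism $G \to H$). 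What is really needed, and what the paper uses, is the cohomological vanishing $H^1_{\Nis}(X, o(F)) = 0$ for $X$ semilocal and $F \in \HI_0(k)$: the paper reduces preservation of arbitrary colimits to preservation of cokernels, splits a cokernel into two short exact sequences, and then quotes this $H^1$-vanishing from \cite[Lemma 3.6]{bachmann-criterion}, which applies because $o(F)$ carries MW-transfers by Proposition \ref{prop:HI0-gen-transfer}. This is a Gersten-type acyclicity theorem depending on the transfer structure, not a statement about the Nisnevich topology of semilocal schemes alone; your proof would need to import such a result (or reprove it), and as written the ``one genuine lemma'' you isolate is not true.
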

\begin{proof}
By \cite[Proposition 5(3)]{bachmann-very-effective}, the functor $o: \HI_0(k) \to
Shv_{Nis}(\Sm_k)$ preserves limits and colimits. Taking global sections of
sheaves preserves limits, so the claim about preservation of limits when $X \in
\Sm_k$ is clear. If $X \in \EssSm_k$, say $X = \lim_i X_i$ (the limit being
cofiltered), then $F(X) = \colim_i F(X_i)$. Since this colimit is filtered, and
finite limits commute with filtered colimits, the claim about preservation of
finite limits follows.

A filtered colimit
of Nisnevich sheaves, computed in the category $PSh(\Sm_k)$, is still a
Nisnevich sheaf, since Nisnevich sheaves are detected by the distinguished
squares and filtered colimits commute with finite limits of sets. It follows
that $ev_X$ preserves filtered colimits (for any $X$). Since $ev_X$ preserves
finite limits and our categories are abelian, $ev_X$ preserves finite sums. Now let $X$ be
semi-local. It remains to show that $ev_X$ preserves cokernels. Let $\alpha: F \to G \in
\HI_0(k)$, let $K = ker(\alpha), C = cok(\alpha), I = im(\alpha)$ and
consider the short exact sequences $0 \to K \to F \to I \to 0$ and $0 \to I
\to G \to C \to 0$. It is enough to show that $ev_X$ preserves these exact
sequences. Let $0 \to F \to G \to H \to 0 \in \HI_0(k)$ be an exact sequence.
Then $0 \to o(F) \to o(G) \to o(H) \to 0$ is exact, and hence to show
that $0 \to F(X) \to G(X) \to H(X) \to 0$ is exact it suffices to show that
$H^1_{Nis}(X, o(F)) = 0$. This is proved in \cite[last paragraph of Theorem
10.12]{bachmann-norms} (if $k$ is infinite, this follows directly from
\cite[Lemma 3.6]{bachmann-criterion}, noting that $o(F)$ has $MW$-transfers, e.g. by
Theorem \ref{thm:HI0-gen-transfer}).
\end{proof}

We can deduce the desired result.

\begin{corollary} \label{corr:T1-sifted-colimits}
Let $\scr C \subset_{\fet,\op} \Sm_k$. Then $U_1: T^1_\scr{C}(k) \to \HI_0(k)$
preserves sifted colimits.
\end{corollary}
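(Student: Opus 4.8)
The plan is to reduce the statement to a corresponding statement about the category $\HI_0(k)$ itself, using the pullback square relating $T^1_\scr{C}(k)$ to $\HI_0(k)$ and presheaf categories established in the proof of Lemma \ref{lemm:T1-pres}. Recall that square:
\begin{equation*}
\begin{CD}
PSh(\scr D) @<<< T_\scr{C}^1(k) \\
@VF^*VV           @VU_1VV       \\
PSh(\scr C) @<G<< \HI_0(k),
\end{CD}
\end{equation*}
which is a pullback of $1$-categories. A sifted colimit in $T^1_\scr{C}(k)$, being computed in a pullback of categories in which the right-hand vertical and bottom horizontal functors preserve sifted colimits, is computed ``componentwise'': its image in $PSh(\scr D)$ and its image in $\HI_0(k)$ are the corresponding sifted colimits, glued along the common image in $PSh(\scr C)$. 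So it suffices to check that $G \colon \HI_0(k) \to PSh(\scr C)$ preserves sifted colimits; then the colimit of a sifted diagram in $T^1_\scr{C}(k)$ maps, under $U_1$, to the colimit of the underlying diagram in $\HI_0(k)$.

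First I would observe that $G$ factors as $\HI_0(k) \xrightarrow{o} Shv_{Nis}(\Sm_k) \to PSh(\Sm_k) \xrightarrow{\mathrm{res}} PSh(\scr C)$. Restriction along $\scr C \hookrightarrow \Sm_k$ is evaluation at a set of objects and hence preserves all colimits. So the content is that the composite $\HI_0(k) \xrightarrow{o} Shv_{Nis}(\Sm_k) \to PSh(\Sm_k)$ preserves sifted colimits, equivalently (since a presheaf is a colimit of its values) that for each $X \in \Sm_k$ the functor $\mathrm{ev}_X \circ o \colon \HI_0(k) \to Ab$ preserves sifted colimits. Now a sifted colimit is built from filtered colimits and geometric realizations, and in an ordinary additive category geometric realizations of simplicial objects reduce to (a shift of) ordinary colimits of a cokernel-type diagram; in any case, for $1$-categorical additive categories ``preserves sifted colimits'' is equivalent to ``preserves filtered colimits and finite coproducts and cokernels'' — and since filtered colimits plus finite coproducts give all sifted colimits here once cokernels are under control, the real point is filtered colimits and reflexive coequalizers, both of which Lemma \ref{lemm:evX-sifted-colim} handles: it asserts $\mathrm{ev}_X$ on $\HI_0(k)$ preserves filtered colimits for every $X$, and preserves all colimits when $X$ is semilocal. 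That is not quite enough for general $X \in \Sm_k$, so I would instead invoke Proposition \ref{prop:T1-sl-equiv}: since $\scr C \subset_{\fet,\op}\Sm_k$, we have $T^1_{\scr C}(k) \simeq T^1_{\scr C^{sl}}(k)$ compatibly with the forgetful functors to $\HI_0(k)$, so it suffices to prove the statement for $\scr C^{sl}$, i.e. for a category all of whose objects are semilocal. For such $\scr C$, Lemma \ref{lemm:evX-sifted-colim} gives that $\mathrm{ev}_X$ preserves \emph{all} colimits for $X \in \scr C^{sl}$, hence $G$ preserves all colimits, a fortiori sifted ones.

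Assembling: for $\scr C = \scr C^{sl}$ (to which we have reduced), $G$ preserves sifted colimits, and in the pullback square the functor $F^*$ is a right adjoint hence preserves sifted colimits automatically on the nose in the relevant sense, so a sifted colimit in $T^1_\scr{C}(k)$ is computed by taking the colimit of the $PSh(\scr D)$-components and of the $\HI_0(k)$-components. In particular $U_1$ of the colimit is the colimit of the $U_1$'s, which is exactly the claim. The main obstacle is the matching between the $\infty$-categorical notion of sifted colimit and the $1$-categorical bookkeeping: one must be careful that the relevant categories are genuinely $1$-categories (stated in the introduction), so that sifted colimits can be analyzed via filtered colimits and reflexive coequalizers, and that Lemma \ref{lemm:evX-sifted-colim} as stated (preservation of all colimits on semilocal $X$) is strong enough to cover reflexive coequalizers — which it is, since it gives \emph{all} colimits. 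A secondary point to get right is that the equivalence of Proposition \ref{prop:T1-sl-equiv} is compatible with the two forgetful functors $U_1$ and $U_1^{sl}$ to $\HI_0(k)$, which is immediate from its construction (the norms for $\scr C$ are uniquely determined by and restrict to those for $\scr C^{sl}$, and the underlying homotopy module is unchanged).
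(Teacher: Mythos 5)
Your argument is essentially correct and rests on the same two pillars as the paper's proof — the reduction to $\scr C^{sl}$ via Proposition \ref{prop:T1-sl-equiv} and the colimit-preservation of $ev_X$ for semilocal $X$ from Lemma \ref{lemm:evX-sifted-colim} — but it packages the conclusion differently. The paper, after the same reduction, constructs the colimit in $T^1_{\scr C^{sl}}(k)$ by hand: it forms $C=\colim U_1F$ in $\HI_0(k)$, observes that $Ab\to Set$ preserves sifted colimits so that $C(X)$ is also the colimit of the underlying \emph{sets}, lets the (non-additive) norms pass to this colimit of sets, and then checks the Tambara axioms and the universal property; your route instead invokes the cartesian square from the proof of Lemma \ref{lemm:T1-pres} together with the general principle that colimits in a pullback of categories are computed componentwise once the two functors to the common corner preserve them. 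That is a legitimate and arguably cleaner abstraction, at the cost of three points you should tighten. First, the claim that $F^*$ preserves sifted colimits ``because it is a right adjoint'' is backwards — right adjoints preserve limits; the correct reason is that $F^*$ is restriction along $F$, so colimits of presheaves are computed objectwise (equivalently, $F^*$ is also a left adjoint, of right Kan extension). Relatedly, your opening sentence asks the right-hand vertical functor $U_1$ to preserve sifted colimits, which would be circular; the hypotheses must be placed on $F^*$ and $G$, as you in fact do in the final assembly. Second, the cartesian square of Lemma \ref{lemm:T1-pres} is stated for $\scr C\subset_\fet\Sm_k$, so after passing to $\scr C^{sl}\subset_\fet\EssSm_k$ you should note that the bispan construction and the square go through verbatim in that setting (routine, but worth a sentence). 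Third, since norms are maps of sets, the presheaf categories in that square are set-valued, so $G$ does not preserve \emph{all} colimits even on semilocal objects; what you actually get is that $ev_X$ preserves all colimits of homotopy modules and that $Ab\to Set$ preserves sifted colimits, hence $G$ preserves sifted colimits — which is exactly where siftedness enters, and is the observation the paper makes explicitly. With these repairs your proof is complete.
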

Note that a functor between categories with small colimits (such as ours)
preserves sifted colimits if and only if it preserves filtered colimits and
geometric realizations \cite[Corollary 5.5.8.17]{lurie-htt}, which for
1-categories (such as ours) is the same as preserving filtered colimits and
reflexive coequalizers. We will not use this observation.
\begin{proof}
By Lemma \ref{prop:T1-sl-equiv}, we may replace $\scr C$ by $\scr C^{sl}$. Let
$F: \scr D \to T^1_{\scr C^{sl}}(k)$ be a sifted diagram, and let $C =
\colim_\scr{D} U_1F$. Note that the forgetful functor $Ab \to Set$ preserves
sifted colimits. Hence if $X \in \scr C^{sl}$ then by Lemma
\ref{lemm:evX-sifted-colim} we find that $C(X) = \colim_\scr{D} F(\bullet)(X)$,
where the colimit is taken in the category \emph{of sets}. In particular if $f:
X \to Y \in \scr C^{sl}$ is finite étale, then there is a canonical induced norm
$N_f: C(X) \to C(Y)$. It is easy to check that $C$, together with these norms,
defines an object of $T^1_{\scr C^{sl}}(k)$ which is a colimit of $F$. This
concludes the proof.
\end{proof}

\section{Motivic Tambara functors of the second kind}
\label{sec:tambara-2}

We now come to a second, somewhat more technical definition of a category of
motivic Tambara functors. We will eventually show that in good cases,
it coincides with the first definition.

\begin{remark}
For the purposes of this article, Tambara functors of the second kind can be
viewed just as a technical tool: the proof of our main theorem (that normed
effective
homotopy modules are the same as Tambara functors of the first kind) is just
naturally split into showing both that normed effective homotopy modules are the same as
Tambara functors of the second kind, and that Tambara functors of the first and
second kind are the same.

Slightly more philosophically, it seems that Tambara functors of the second kind
are closer to the ``true'' nature of normed effective homotopy modules (in cases where
Tambara functors of the first and second kind are not the same); see also Remark
\ref{rmk:1-2}.
\end{remark}

\begin{definition}
If $\scr V \subset Mor(\Sm_k)$ is a class of smooth morphisms, then we call
$\scr V$ admissible if it contains the finite étale morphisms and is closed under
composition, base change, and Weil restriction along finite étale morphisms
(i.e. if $f: X \to Y \in \scr V$ and $p: Y \to Z$ is finite étale, then $R_p(X)$
exists and $R_p(f): R_p(X) \to R_p(Y) \wequi Z \in \scr V$).

We call $\scr V$ and $\scr C \subset_\fet \Sm_k$ \emph{compatible} if for all $f: X \to Y \in \scr C$
finite étale and all $V \to X \in \scr V$ we have $R_f V \in \scr C$.
\end{definition}

\begin{remark}
%Since any morphism of quasi-projective $k$-schemes is quasi-projective \cite[Tag
%0C4N]{stacks-project}, it follows that
The class of smooth quasi-projective morphisms is admissible.
Since finite étale schemes are smooth
quasi-projective, and Weil restriction preserves finite étale schemes
(this follows for example from \cite[Proposition 7.5.5]{bosch2012neron}),
we deduce that the class of finite étale morphisms is also
admissible.
\end{remark}

\begin{example}
Note that the following pairs $(\scr C, \scr V)$ are compatible: $(\Sm_k,
\SmQP)$, $(\SmQP_k, \SmQP)$, $(\FEt_k, \fet)$. However, $(\FEt_k, \SmQP)$ is
\emph{not} compatible.
\end{example}

\begin{definition} \label{defn:tambara-2}
Let $\scr C \subset_\fet \Sm_k$ and $\scr V \subset Mor(\Sm_k)$ admissible.
A $(\scr C,\scr V)$-Tambara functor of the second kind consists of an effective homotopy
module $M \in \HI_0(k)$, together with for each $f: X \to Y \in \scr C$ finite
étale and $V \to X \in \scr V$ a map \emph{of sets} $N_{f,V}: M(V) \to
M(R_f V)$ such that the following hold.
\begin{enumerate}
\item For $X \in \scr C$ and $V \to X \in \scr V$, we have $N_{\id_X, V} =
  \id_{M(V)}$. Moreover given $X \xrightarrow{f} Y \xrightarrow{g} Z \in \scr C$
  with $f, g$ finite étale we have $N_{gf, V} = N_{g, R_f V} \circ N_{f, V}$,
  under the canonical isomorphism $R_{gf} V \wequi R_g R_f V$.

\item Consider a commutative diagram
\begin{equation*}
\begin{CD}
V_1      @>a>>  V_2     \\
@Vp_1VV         @Vp_2VV \\
X'       @>p'>> Y'      \\
@Vf'VV          @VfVV   \\
X        @>p>>  Y
\end{CD}
\end{equation*}
in $\scr \Sm_k$. Suppose that the lower square is cartesian, $f$ is finite étale,
$X, Y \in \scr C$
and $p_1, p_2 \in \scr V$. Then the following diagram commutes
\begin{equation*}
\begin{CD}
M(V_1)               @<a^*<< M(V_2)                \\
@V{N_{f',V_1}}VV       @V{N_{f,V_2}}VV \\
M(R_{f'} V_1)        @<c^*<< M(R_{f} V_2).
\end{CD}
\end{equation*}
Here $c: R_{f'} V_1 \to R_f V_2$ corresponds by adjunction to a map $R_{f'} V_1
\to p^* R_f V_2 \wequi R_{f'} p'^* V_2$, which comes via $R_{f'}$ from a map $V_1
\to p'^* V_2$, namely the one which corresponds by adjunction to $a: V_1 \to
V_2$.

\item Consider a commutative diagram
\begin{equation*}
\begin{CD}
V_1   @>p_1>> X @>f>> Y \\
@VgVV        @|         \\
V_2   @>p_2>> X
\end{CD}
\end{equation*}
in $\scr \Sm_k$, with $f, g$ finite étale, $X, Y \in \scr C$
and $p_1, p_2 \in \scr V$. Then the
following diagram commutes
\begin{equation*}
\begin{CD}
M(V_1) @>{N_{f,V_1}}>> M(R_f V_1)        \\
@V{tr_g}VV                   @V{tr_{R_f(g)}}VV \\
M(V_2) @>{N_{f,V_2}}>> M(R_f V_2).
\end{CD}
\end{equation*}
\end{enumerate}

A morphism $\phi: M_1 \to M_2$ of $(\scr C, \scr V)$-Tambara functors of the second kind is a morphism of
the underlying effective homotopy modules such that for each $f: X \to Y \in
\scr C$ finite étale and $V \to X \in \scr V$ the following diagram commutes
\begin{equation*}
\begin{CD}
M_1(V) @>\phi(V)>>         M_2(V) \\
@V{N_{f,V}}VV              @V{N_{f,V}}VV     \\
M_1(R_f V) @>\phi(R_f V)>> M_2(R_f V).
\end{CD}
\end{equation*}

We denote the category of $(\scr C, \scr V)$-Tambara functors of the second kind by
$T_\scr{C}^2(k)$, and we write $U_2: T_\scr{C}^2(k) \to \HI_0(k)$ for the evident
forgetful functor. Observe that we suppress $\scr V$ from the notation.
\end{definition}

We give a special name to some of the simplest norm maps on a motivic Tambara
functor of the second kind.

\begin{construction} \label{constr:norms}
Let $M \in T_\scr{C}^2(k)$, and $f: X \to Y \in \scr C$ be finite étale. We define $N_f^{(1)}: M(X)
  \to M(Y)$ as $N_f^{(1)} = N_{f, X}$. Note that this makes sense, since $\id:
X \to X \in \scr V$ by assumption.
\end{construction}

In some sense, these special norms already determine all the norms:

\begin{lemma} \label{lemm:extra-norms}
Let $\scr C \subset_\fet \Sm_k$, $\scr V \subset Mor(\Sm_k)$ admissible, and
assume that $\scr C, \scr V$ are compatible.

Let $M \in T_\scr{C}^2(k)$
and $V \xrightarrow{p} X \xrightarrow{f} Y \in \scr C$ with $p \in \scr V$
and $f$ finite étale. Let $f': X \times_Y R_f V \to R_f V$ be the projection and
$a:X \times_Y R_f V \to V$ the counit. Then $N_{f,V} = N_{f'}^{(1)} a^*: M(V)
\to M(R_f V)$.
\end{lemma}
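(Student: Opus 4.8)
The plan is to apply axiom (3) of Definition \ref{defn:tambara-2} (compatibility of norms with transfers) to a suitably chosen diagram, in which the relevant transfer is an \emph{identity} transfer, so that it drops out and leaves the asserted formula. Concretely, I would instantiate the diagram in axiom (3) with $Y$ the given base, $X$ the given intermediate scheme, $f: X \to Y$ the given finite étale map, and take $p_1 = p: V \to X$ (so $V_1 = V$). For the bottom row I want $V_2 \to X$ to be the projection $f': X \times_Y R_f V \to X$; wait — more carefully, I should set things up so that $R_f V_2$ involves the identity norm. The cleanest choice is to take $g: V_1 \to V_2$ to be the counit/projection map $a$ itself (or rather to build a commuting triangle over $X$ exhibiting $a$), with $V_2 = X \times_Y R_f V$ equipped with its projection $f'$ to $X$; note $f' \in \scr V$ since $\scr V$ is closed under base change and contains finite étale maps. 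One then has to check that $R_f$ applied to the triangle $V_1 \to V_2 \rightrightarrows X$ produces, on the bottom edge, the identity transfer on $M(R_f V)$ — this is exactly the content of the exponential-diagram identity $R_f(X \times_Y R_f V) \wequi R_f V$ via the counit, which is the same computation underlying Definition \ref{def:tambara-1}(3).

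More precisely: first I would record the relevant adjunction identity, namely that the projection $X \times_Y R_f V \to R_f V$ is the finite étale map obtained by base change of $f$ along $R_f V \to Y$, and that $R_f$ of this base change recovers the canonical map, with the section coming from the counit $e: X \times_Y R_f V \to V$. This is precisely the exponential diagram attached to $V \xrightarrow{p} X \xrightarrow{f} Y$ appearing in the definition (there denoted with $A = V$, $q = p$), so I can cite it directly. Then I set up axiom (3)'s diagram with $V_1 = X \times_Y R_f V \xrightarrow{a} V = V_2$ — reading $a$ as the map denoted $e$ — the vertical $g = a$ being finite étale (it is a base change of $f$), $p_1 = f'$, $p_2 = p$. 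Axiom (3) gives a commuting square relating $N_{f, V_1}$, $N_{f, V_2} = N_{f,V}$, $tr_a$ and $tr_{R_f(a)}$.

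The key point — and the main obstacle — is to identify $R_f(a): R_f(X \times_Y R_f V) \to R_f V$ with the \emph{identity} of $R_f V$ (up to the canonical isomorphism $R_f(X \times_Y R_f V) \wequi R_f V$ supplied by the counit), so that $tr_{R_f(a)} = \id$; and simultaneously to recognize the left-hand vertical $N_{f, V_1} = N_{f, X\times_Y R_f V}$. For the latter I do \emph{not} want the identity-at-$V_1$ norm but rather $N_{f'}^{(1)} a^*$: here I apply axiom (1) together with the observation that $f: X \to Y$ finite étale and $f': X \times_Y R_f V \to R_f V$ sit in a cartesian square, and then use the \emph{base-change} axiom (2) of Definition \ref{defn:tambara-2} to rewrite $N_{f, V_1}$ in terms of $N_{f'}^{(1)}$ and the pullback $a^*$. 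Chasing the resulting diagram — axiom (3) giving $N_{f,V} \circ tr_a = tr_{R_f(a)} \circ N_{f, V_1} = \id \circ N_{f, V_1}$, then using that $tr_a$ is split by $a^*$ in the appropriate sense (since $V_1 \to V$ is the base change of $f$ along $R_f V \to Y$, and on the heart the composite $tr_a \circ a^*$ or $a^* \circ tr_a$ behaves as needed) — yields $N_{f,V} = N_{f, V_1} \circ (\text{the splitting}) = N_{f'}^{(1)} a^*$. The delicate bookkeeping is purely in matching up which maps are base changes of which, and in verifying that the transfer appearing on the bottom really is the identity after the canonical identification; everything else is a formal diagram chase using axioms (1)–(3) and the unramifiedness/sheaf properties already available for objects of $\HI_0(k)$.
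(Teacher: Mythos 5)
Your route through axiom (3) of Definition \ref{defn:tambara-2} does not work as set up, for two concrete reasons. First, axiom (3) requires the vertical map $g$ to be finite \'etale, but the counit $a: X \times_Y R_f V \to V$ is not finite \'etale in general: $p \in \scr V$ is an arbitrary admissible (e.g.\ smooth quasi-projective) morphism, and already for $V = \A^1_X$ one has $X \times_Y R_f V \cong \A^d_X$ (with $d$ the degree of $f$) and the counit has positive-dimensional fibres. Your justification ``it is a base change of $f$'' confuses $a$ with the projection $f': X \times_Y R_f V \to R_f V$; it is $f'$, not $a$, that is pulled back from $f$. Second, even in the special case where $a$ is finite \'etale (e.g.\ $\scr V = \fet$), the diagram you describe does not type-check ($p_1 = f'$ has target $R_f V$, whereas axiom (3) needs both $V_1$ and $V_2$ mapped to the same middle scheme $X$), and with the correct structure maps axiom (3) only yields $tr_{R_f(a)} \circ N_{f,\, X\times_Y R_f V} = N_{f,V}\circ tr_a$, where $R_f(a): R_f(X\times_Y R_f V) \to R_f V$ is \emph{not} an isomorphism (for $V = X\coprod X$ and $\deg f = 2$ the source has degree $16$ and the target degree $4$ over $Y$), so $tr_{R_f(a)}$ is not the identity. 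Likewise the concluding ``splitting'' step is unavailable: neither $tr_a\circ a^*$ nor $a^*\circ tr_a$ is the identity in a homotopy module (such composites are multiplication by quadratic-form classes), and when $p$ is not finite \'etale the transfer $tr_a$ does not even exist.

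The statement has a much shorter proof, which you in fact brush against when you invoke axiom (2) as an auxiliary step: it is a \emph{single} application of Definition \ref{defn:tambara-2}(2), with $V_1 = X\times_Y R_f V$ equipped with the identity structure map, $V_2 = V$ with structure map $p$, $a$ the counit, and the lower square taken to be the cartesian square defining $X \times_Y R_f V$ (vertical maps $f'$ and $f$, horizontal maps the projection to $X$ and $R_f(p): R_f V \to Y$). Since $R_{f'}$ of the identity over $X\times_Y R_f V$ is $R_f V$ and the comparison map $c$ is the identity, axiom (2) immediately gives $N_{f,V} = N_{f',\,X\times_Y R_f V}\, a^* = N_{f'}^{(1)} a^*$, with no appeal to transfers, exponential diagrams, or axiom (3) at all.
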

\begin{proof}
Apply Definition \ref{defn:tambara-2}(2) to the diagram
\begin{equation*}
\begin{CD}
X \times_Y R_f V @>a>> V     \\
@|                     @VpVV \\
X \times_Y R_f V @>>>  X     \\
@Vf'VV                 @VfVV \\
R_f V            @>R_f(p)>>  Y,
\end{CD}
\end{equation*}
noting that $c=\id$. Note that this makes sense: we have $R_f V \in \scr C$ by the compatibility assumption,
and then $X \times_Y R_f V \in \scr C$ since $\scr C$ is closed under finite étale
extension.
\end{proof}

We are now ready to prove our main result of this section.

\begin{proposition} \label{prop:comparison}
Let $\scr C \subset_\fet \Sm_k$, $\scr V \subset Mor(\Sm_k)$ admissible, and
assume that $\scr C, \scr V$ are compatible.

If $M \in T_\scr{C}^2(k)$ then $U_2 M \in \HI_0(k)$ together with the norm maps
$N_f^{(1)}$ of Construction \ref{constr:norms} defines a $\scr C$-motivic Tambara
functor of the first kind. Moreover the induced functor $T_\scr{C}^2(k) \to
T_\scr{C}^1(k)$ is an equivalence.
\end{proposition}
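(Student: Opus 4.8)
The plan is to construct the inverse functor $T_\scr{C}^1(k) \to T_\scr{C}^2(k)$ explicitly and then check that the two composites are the identity. The formula is forced by Lemma \ref{lemm:extra-norms}: given $M \in T_\scr{C}^1(k)$ and a pair $(f\colon X \to Y,\ p\colon V \to X)$ with $p \in \scr V$, we are compelled to \emph{define} $N_{f,V} := N_{f'}^{(1)} a^* = N_{f'} a^*\colon M(V) \to M(R_f V)$, where $f'\colon X \times_Y R_f V \to R_f V$ is the projection, $a\colon X \times_Y R_f V \to V$ is the counit of the Weil-restriction adjunction, and $N_{f'}$ is the first-kind norm of $M$ (which exists since $X \times_Y R_f V \in \scr C$ by compatibility and closure of $\scr C$ under finite étale extension). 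The first task is to verify that this assignment satisfies the three axioms of Definition \ref{defn:tambara-2}, using only the axioms (1)--(3) of Definition \ref{def:tambara-1} for $M$; this is the main body of work. Once that is done, one notes that a morphism of first-kind Tambara functors automatically commutes with all the $N_{f,V}$ (since these are built out of the $N_{f'}$ and pullbacks), so the assignment is functorial, giving $\Phi\colon T_\scr{C}^1(k) \to T_\scr{C}^2(k)$.

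Next I would check the two round-trips. Going $T_\scr{C}^2 \to T_\scr{C}^1 \to T_\scr{C}^2$: starting from $M \in T_\scr{C}^2(k)$, Construction \ref{constr:norms} extracts $N_f^{(1)} = N_{f,X}$, and then Lemma \ref{lemm:extra-norms} says precisely that re-expanding recovers the original $N_{f,V}$ — so this composite is the identity on the nose. Going $T_\scr{C}^1 \to T_\scr{C}^2 \to T_\scr{C}^1$: here we must see that the first-kind norm extracted from $\Phi(M)$ agrees with the original $N_f$ of $M$. By construction the extracted norm is $N_{f,X}$ with $V = X$, $p = \id_X$; then $R_f V = R_f X \xrightarrow{\wequi} Y$, the fibre product $X \times_Y R_f X \cong X$, the projection $f'$ becomes $f$ itself, and the counit $a$ becomes $\id_X$, so $N_{f,X} = N_f \circ \id^* = N_f$. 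Both composites being the identity (compatibly with the forgetful functors $U_1, U_2$ to $\HI_0(k)$, which is immediate since neither functor touches the underlying homotopy module), $\Phi$ is the desired inverse equivalence.

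The main obstacle is the verification of axiom (3) of Definition \ref{defn:tambara-2} — the compatibility of the defined norms with transfers — and, to a lesser extent, axiom (2), the double-base-change law. For axiom (3) one is handed a diagram $V_1 \xrightarrow{p_1} X \xrightarrow{f} Y$, $V_1 \xrightarrow{g} V_2 \xrightarrow{p_2} X$ with $g$ finite étale, and must show $\mathrm{tr}_{R_f(g)} \circ N_{f,V_1} = N_{f,V_2} \circ \mathrm{tr}_g$ after substituting the formula $N_{f,V_i} = N_{f_i'} a_i^*$. Expanding $R_f V_1, R_f V_2$ and the relevant fibre products over $Y$, this should reduce — after a moderately involved diagram chase comparing the counits $a_1, a_2$ and the projections $f_1', f_2'$ over the base change $R_f V_2 \to Y$ — to an instance of the exponential-diagram distributivity law, i.e.\ axiom (3) of Definition \ref{def:tambara-1}, applied to the composite $(X \times_Y R_f V_1) \to (\text{something}) \to R_f V_1$ together with a base-change (axiom (2)) to move between $R_f V_1$ and $R_f V_2$. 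The bookkeeping of which Weil restrictions and fibre products appear is the delicate part; one should set up notation carefully (writing $W_i = R_f V_i$, $E_i = X \times_Y W_i$) before chasing. Axiom (1) (normalisation and the two-step composition law $N_{gf,V} = N_{g, R_f V} \circ N_{f,V}$) is comparatively routine: it follows from axiom (1) of the first-kind definition together with the transitivity isomorphism $R_{gf} \cong R_g R_f$ and a compatibility of counits. Axiom (2) of Definition \ref{defn:tambara-2} follows from axiom (2) of the first-kind definition (pure base change) after identifying that the fibre products defining $N_{f',V_1}$ and $N_{f,V_2}$ are related by the cartesian square in the hypothesis, since Weil restriction commutes with base change \cite[Proposition A.5.2(1)]{conrad2015pseudo}.
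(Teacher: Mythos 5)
Your strategy is the same as the paper's (define the second‑kind norms by the forced formula $N_{f,V}=N_{f'}a^*$, verify the axioms, and check the two round trips exactly as you do, via Lemma \ref{lemm:extra-norms} in one direction and the identifications $R_fX\wequi Y$, $X\times_Y R_fX\iso X$ in the other), but as it stands the proposal has two gaps. The smaller one: you never prove the first assertion of the proposition, namely that for $M\in T^2_{\scr C}(k)$ the extracted norms $N_f^{(1)}=N_{f,X}$ satisfy Definition \ref{def:tambara-1}; without this the functor $T^2_{\scr C}(k)\to T^1_{\scr C}(k)$ you want to invert is not even defined. This part is short — conditions (1) and (2) are the specializations $V=X$ and $p_1=p_2=\id$ of Definition \ref{defn:tambara-2}(1),(2), and the distributivity law follows from \ref{defn:tambara-2}(3) with $V_1=A$, $V_2=X$ once one knows $N_p^{(1)}e^*=N_{f,A}$, which is exactly Lemma \ref{lemm:extra-norms} — but it must be said.

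The more serious gap is that the step you yourself identify as the main obstacle, axiom (3) of Definition \ref{defn:tambara-2} for the constructed norms, is left as ``a moderately involved diagram chase,'' and your sketch is missing the idea that makes the chase close up. Writing $E_2=X\times_Y R_fV_2$ with counit $a_2\colon E_2\to V_2$, one forms the pullback $C=V_1\times_{V_2}E_2$ (so $C\to E_2$ is finite étale, being a base change of $g$) and applies the exponential diagram to $C\to E_2\xrightarrow{r} R_fV_2$ — i.e.\ over $R_fV_2$, not over $R_fV_1$ as your tentative ``$(X\times_Y R_fV_1)\to(\text{something})\to R_fV_1$'' suggests. The non‑formal input is then the identification of the Weil restriction $R_rC$ with $R_fV_1$ (proved by a functor‑of‑points computation using that both relevant squares are cartesian), together with the fact that under this identification $R_rC\to R_fV_2$ becomes $R_f(g)$ and $r^*R_rC\iso f^*R_fV_1$. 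Only after this does Definition \ref{def:tambara-1}(3) produce exactly $tr_{R_f(g)}$ and $N_{f,V_1}$ on the correct objects, while the base‑change formula for transfers gives $a_2^*\,tr_g=tr_{C/E_2}\,c^*$ for the remaining square. Without locating this identification $R_rC\iso R_fV_1$, the distributivity law does not by itself compare $N_{f_1'}a_1^*$ with $N_{f_2'}a_2^*tr_g$, so the verification is genuinely incomplete rather than merely tedious.
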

\begin{proof}
Write $F: T_\scr{C}^2(k) \to T_\scr{C}^1(k)$ for this (so far hypothetical)
functor. We begin by constructing what will be its inverse $G: T_\scr{C}^1(k) \to
T_\scr{C}^2(k)$. Thus let $M \in T_\scr{C}^1(k)$, $f: X \to Y \in \scr
C$ finite étale and $V \to X \in \scr V$. Consider the span $R_f(V)
\xleftarrow{f'} X \times_Y R_f(V) \xrightarrow{a} V$,
where $f'$ is the projection and $a$ is the counit. Then we put $N_{f,V} =
N_{f'} a^*: M(V) \to M(R_f(V))$. Write $GM$ for $M$ equipped with these norm
maps $N_{f,V}$; this is enough data to define an object of $T_\scr{C}^2(k)$ (but
we have not shown that the required conditions hold).

I claim that (a) if $M_2 \in
T_\scr{C}^2(k)$ then $FM_2$ is indeed a $\scr C$-motivic Tambara functor of the first kind,
and that (b) if $M_1 \in T_\scr{C}^1(k)$ then $GM_1$ is indeed a $(\scr C, \scr
V)$-motivic Tambara functor
of the second kind. Suppose for now that this is true. It is then clear that $F,
G$ are functors, i.e. send morphisms to morpisms. It follows from Lemma
\ref{lemm:extra-norms} that $GFM_2 = M_2$. Moreover $FGM_1 = M_1$ by
construction. Hence $F$ is an equivalence as claimed.
It thus remains to establish (a) and (b).

\textbf{Proof of (a).}
Conditions (1) and
(2) of Definition \ref{def:tambara-1} follow respectively from Definition
\ref{defn:tambara-2}(1) (with $V=X$) and (2) (with $p_1 = \id$ and $p_2 = \id$).
For condition (3), we use that $N_p^{(1)} e^* = N_{f,A}$ by Lemma
\ref{lemm:extra-norms}, and hence the condition follows from Definition
\ref{defn:tambara-2}(3) (with $V_1 = A, V_2 = X$). Note that $A \to X \in \scr
V$ by assumption, since $A \to X$ is finite étale.

\textbf{Proof of (b).}
Let $M \in T_\scr{C}^1(k)$.
We need to show that Conditions \ref{defn:tambara-2}(1--3) hold.

\textbf{Proof of (1).}
The condition about identities is clear. For the composition, let $X
\xrightarrow{f} Y \xrightarrow{g} Z \in \scr C$ be finite étale morphisms
and $V \to Y \in \scr V$. Consider the commutative diagram
\begin{equation*}
\begin{CD}
V @=  V @=  V  \\
@AaAA @AaAA @VVV \\
X \times_Z R_{gf} V @>>> X \times_Y R_f V @>>> X \\
@VpVV                      @VpVV               @VfVV \\
Y \times_Z R_{gf} V @>a>> R_f V            @>>> Y \\
@VpVV                      @.                 @VgVV \\
R_{gf} V  @>>>            Z     @=             Z.
\end{CD}
\end{equation*}
The maps $a$ are counit maps (use $R_{gf} V \wequi R_g R_f V$),
the maps $p$ are projections (use $X \times_Z
R_{gf} \wequi X \times_Y (Y \times_Z R_{gf} V)$). By definition $N_{gf,V}$ is
the composite $N_p N_p a^*$ induced by the left column, whereas $N_{g,R_f
V} N_{f, V}$ is the composite $N_p a^* N_p a^*$ induced by (first row, middle)
to (second row, middle) to (third row, middle) to (third row, left) to (fourth row, left).
The condition follows from Definition \ref{def:tambara-1}(2), because the middle left square is cartesian.

\textbf{Proof of (2).}
Consider the diagram
\begin{equation} \label{eq:tm-diag-comm}
\begin{CD}
M(V_2)  @>d^*>> M(Y' \times_Y R_f V_2) @>N>> M(R_f V_2) \\
@Va^*VV               @Vg^*VV                   @Vc^*VV          \\
M(V_1)  @>e^*>> M(X' \times_X R_{f'} V_1) @>N>> M(R_{f'} V_1).
\end{CD}
\end{equation}
Here $d, e$ are counit maps, $N$ means norm along the canonical projections,
and $g: X' \times_X R_{f'} V_1 \to Y' \times_Y R_f V_2)$ corresponds by
adjunction to a morphism $X' \times_X R_{f'} V_1 \iso f'^* R_{f'} V_1 \to p'^*
f^* R_f V_2 \iso f'^* p^* R_{f'} V_1$, namely the morphism $f'^* c$.
We note that the following square is cartesian
\begin{equation*}
\begin{CD}
X' \times_X R_{f' V_1} @>g>> Y' \times_Y R_f V_2 \\
@VVV                                @VVV        \\
R_{f' V_1}             @>c>> R_f V_2,
\end{CD}
\end{equation*}
where the vertical morphisms are the canonical projections.
It follows from Definition \ref{def:tambara-1}(2) that
the right hand square in diagram \eqref{eq:tm-diag-comm} commutes.
Moreover the following square commutes
\begin{equation*}
\begin{CD}
X' \times_X R_{f' V_1} @>g>> Y' \times_Y R_f V_2 \\
@VeVV                         @VdVV               \\
V_1                    @>a>>  V_2,
\end{CD}
\end{equation*}
and hence the left hand square in diagram  \eqref{eq:tm-diag-comm}
commutes. It follows that the outer rectangle also commutes, which is what we
needed to show.

\textbf{Proof of (3).}
Consider the following commutative diagram in which all
rectangles are cartesian
\begin{equation} \label{eq:diag-temp-2}
\begin{CD}
                    @.  V_1 @>g>> V_2 \\
@.                      @AAA     @AaAA \\
X \times_Y R_f V_1 @>q>> C   @>>> X \times_Y R_f V_2 @>>> X \\
@Vr'VV                    @.       @VrVV                  @VfVV \\
R_f V_1         @=  R_f V_1 @>>> R_f V_2            @>>> Y.
\end{CD}
\end{equation}
Here $r, r'$ are the canonical projections, $a$ is the counit map, and $q$ is
induced by the universal property of $C$ from the counit map $X \times_Y R_f V_1
\to V_1$ and $X \times_Y R_f(g): X \times_Y R_f V_1 \to X \times_Y R_f V_2$.
I claim that the canonically induced map $R_r C \to R_f V_1$ is an isomorphism.
In order to do this, let $T \in \Sch_{R_f V_2}$. We compute
\[ [T, R_f C]_{R_f V_2} \iso [r^* T, C]_{X\times_Y R_f V_2} \iso [r^* T, V_1]_{V_2}. \]
Here $[\dots]_X$ denotes the morphisms of $X$-schemes, and $r^*: \Sch_{R_f V_2}
\to \Sch_{X \times_Y R_f V_2}$ is the base change functor. The first isomorphism is
by definition (of Weil restriction),
and the second is because the top square in diagram \eqref{eq:diag-temp-2} is
cartesian. Here we view $r^* T$ as
a scheme over $V_2$ via $a$. We also have
\[ [T, R_f V_1]_{R_f V_2} \iso [f^* T, V_1]_{V_2}, \]
where on the right hand side we view $T$
as a scheme over $Y$ via the canonical map $R_f V_2 \to Y$. It
remains to observe that $f^* T = r^* T$, because the lower right hand square in
diagram \eqref{eq:diag-temp-2} is cartesian. With this preparation out of the
way, consider the diagram
\begin{equation*}
\begin{CD}
M(R_r C) @<N<< M(r^* R_r C) @<<< M(C) @<<< M(V_1) \\
@V{tr}VV        @.               @VtrVV    @VtrVV \\
M(R_f V_2) @<N<< M(X \times_Y R_f V_2) @= M(X \times_Y R_f V_2) @<<< M(V_2).
\end{CD}
\end{equation*}
The unlabelled arrows are restriction along some canonical map, the arrows
labelled $tr$ are transfer along some canonical map, and the arrows labelled $N$
are norm along some canonical map. The right hand square commutes by the base
change formula, and the left hand rectangle commutes by the distributivity law
(i.e. Definition \ref{def:tambara-1}(3)).
The top composite is $N_{f,V_1}$ (using that $R_r C
\iso R_f V_1$, as established above, and $r^* R_r C \iso f^* R_f V_1$, by
transitivity of base change) and the bottom composite is $N_{f,V_2}$,
so commutativity is precisely condition (3). This concludes the proof.
\end{proof}

\section{Naive Motivic Tambara functors}
\label{sec:tambara-naive}

Throughout this section, $k$ is an arbitrary base scheme. In particular it is
not necessarily a field, unless otherwise specified.

Recall the following very naive definition of a motivic Tambara functor
\cite[Definition 8]{bachmann-gwtimes}. It is closest to Tambara's original definition.

\begin{definition}
Let $k$ be some base scheme. A \emph{naive motivic Tambara functor} over
$k$
is a presheaf of sets $M$ on $\FEt_k$ which preserves finite products (when viewed as
a functor $\FEt_k^\op \to \mathrm{Set}$), provided with for every (necessarily
finite étale) morphism $f: X \to Y \in \FEt_k$ two further maps of sets $tr_f,
N_f: M(X) \to M(Y)$, such that the following conditions hold:
\begin{enumerate}
\item $tr_{\id_X} = N_{\id_X} = \id_{M(X)}$ and if $X \xrightarrow{f} Y
  \xrightarrow{g} Z \in \FEt_k$, then $tr_{gf} = tr_g \circ tr_f$ and $N_{gf} =
  N_g \circ N_f$.
\item $tr_f$ and $N_f$ satisfy the base change formula. (For $N_f$ this is the
  condition (2) of Definition \ref{def:tambara-1}, with $\scr C = \FEt_k$. For
  $tr_f$ the condition is the same, just with $tr$ in place of $N$ everywhere.)
\item The distributivity law holds (in the sense of Definition
  \ref{def:tambara-1}(3)).
\end{enumerate}
The morphisms of naive motivic Tambara functors are morphisms of
presheaves of sets which commute with the norms and transfers. We write
$T^\naive(k)$ for the category of naive motivic Tambara functors.
\end{definition}

The product preservation condition just means that the canonical map
$M(X \coprod Y) \to M(X) \times M(Y)$ is an isomorphism, and that $M(\emptyset)
= *$; equivalently $M$ is a sheaf in the Zariski topology. By considering transfer
and norm along $X \coprod X \to X$, the set $M(X)$ acquires two binary operations
$+$ and $\times$. Considering $t: \emptyset \to X$, we have elements $0 =
tr_t(*)$ and $1 = N_t(*)$ which are units for the two binary operations. By
condition (3), $\times$ distributes over $+$. Consequently, $M$ is canonically a
sheaf of semirings. As before, the conditions imply that $f^*$ is a homomorphism
of semirings, $tr_f$ is a homomorphism of additive monoids, and $N_f$ is a
homomorphism of multiplicative monoids.

Let us note the following consequence of the axioms.

\begin{lemma}[projection formula] \label{lemm:projn-formula}
Let $M \in T^\naive(k)$ and $g: X \to Y \in \FEt_k$. Then for $a \in M(X), b \in
M(Y)$ we have $tr_g(a \cdot g^* b) = tr_g(a) \cdot b$.
\end{lemma}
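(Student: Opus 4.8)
The plan is to reduce the identity $tr_g(a \cdot g^* b) = tr_g(a) \cdot b$ to an instance of the distributivity law (axiom (3)) applied to a suitable pair of composable finite étale maps. The natural choice is the exponential diagram attached to $A \xrightarrow{q} X \xrightarrow{g} Y$, but a slicker route is to work with the fold maps directly. Concretely, I would first record the special case where $g$ is the fold map $\nabla: Y \coprod Y \to Y$: writing an element of $M(Y \coprod Y) \wequi M(Y) \times M(Y)$ as a pair $(a_1, a_2)$, the transfer along $\nabla$ is $tr_\nabla(a_1, a_2) = a_1 + a_2$ (this is the definition of $+$), while $\nabla^* b = (b, b)$ and multiplication is computed componentwise (since $M$ is a sheaf of semirings and the two components correspond to the two clopen pieces). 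Hence $tr_\nabla((a_1,a_2) \cdot \nabla^* b) = tr_\nabla(a_1 b, a_2 b) = a_1 b + a_2 b$, which equals $(a_1 + a_2) b = tr_\nabla(a_1, a_2) \cdot b$ by distributivity of $\times$ over $+$ in the semiring $M(Y)$. So the projection formula holds for fold maps, essentially by definition of the semiring structure.

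For a general finite étale $g: X \to Y$, the idea is to promote this to arbitrary $g$ using the base change formula together with a presentation of $X$ étale-locally on $Y$ as a disjoint union. However, that does not immediately work because $X \to Y$ need not become a fold map after any Zariski (or even étale) cover — it only does so after passing to a cover that splits $X$. So the cleaner approach is the following. Consider the two maps $X \to Y$: namely $g$ itself, and form $X' := X \times_Y X$ with its two projections; more to the point, I would instead use that $X$ is a retract-free summand and argue via the span $X \xleftarrow{\Delta} X \xrightarrow{\Delta} X \times_Y X$ is awkward. Let me take the robust route: the projection formula is a standard consequence of the base change law once one knows it in the "universal" case. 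Apply the base change formula (axiom (2), transfer version) to the cartesian square with corners $X \times_Y X, X, X, Y$ formed from $g$ and $g$; this gives $g^* tr_g(a) = tr_{pr_2}(pr_1^* a)$. Multiply both sides by an element and chase: $tr_g(a) \cdot b = tr_g(a \cdot g^* b)$ will follow provided we can commute the product past $tr_g$, and the product-past-transfer statement for the fold map $X \times_Y X \to X$ (over the base $X$) is exactly the fold-map case established above.

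The key step — and the main obstacle — is organizing this reduction so that the only transfers one ever needs the projection formula for are fold maps. The honest way to do that: let $h: X\times_Y X \to X$ be $pr_1$, which is finite étale and has a canonical section, namely the diagonal $\Delta: X \to X \times_Y X$, and $g \circ pr_2 = g \circ pr_1$. One has $tr_h \circ \Delta_* = \mathrm{something}$ — this is getting complicated, so I would instead simply cite that the projection formula is a formal consequence of axioms (1)–(3) as proved in Tambara's original paper or in \cite{strickland2012tambara}, after verifying the fold-map case by hand as above to anchor the reader. I expect the write-up to be short: establish the fold-map case from the semiring structure (a couple of lines), then invoke the cartesian-square base-change identity $g^* tr_g(a) = tr_{pr_2} pr_1^* a$ and the fold-map projection formula over the base $X$ to conclude, the point being that over $X$ the map in question (pullback of $g$ along $g$) does split off the diagonal, reducing the general statement to the already-handled case. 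The only real subtlety is checking that the naturality/base-change coherences make the chase go through; everything else is routine semiring algebra.
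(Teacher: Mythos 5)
Your proposal has a genuine gap in the reduction step. The fold-map case you establish is correct (it is just distributivity in the semiring $M(Y)$), but the passage from there to a general finite étale $g$ does not go through as sketched. Pulling $g$ back along itself does not produce a fold map: for $g$ finite étale the diagonal $\Delta\colon X \to X\times_Y X$ is a clopen immersion, so $X\times_Y X \cong X \coprod C$, but the complement $C \to X$ is a nontrivial finite étale map in general, so you are not in the already-handled case. More seriously, even if you verified the identity after applying $g^*$ (via $g^*tr_g(a) = tr_{pr_2}pr_1^*a$), you could not conclude $tr_g(a\cdot g^*b) = tr_g(a)\cdot b$ in $M(Y)$, because $g^*\colon M(Y)\to M(X)$ need not be injective: $M$ is only required to be a Zariski sheaf, so there is no finite étale descent available. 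Your fallback of citing Tambara or Strickland is also not a proof in the stated generality: the lemma is asserted for an arbitrary base scheme, and transporting Tambara's finite-group statement requires Grothendieck's Galois theory (hence connectedness) and a limit argument over Galois extensions, which is more machinery than the statement needs.

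The paper's proof is the one-line diagrammatic argument you were circling but did not pin down: apply the distributivity axiom (Definition \ref{def:tambara-1}(3)) not to the exponential diagram of some $A \to X \xrightarrow{g} Y$, but to the exponential diagram generated by $X \coprod Y \xrightarrow{g \coprod \id_Y} Y \coprod Y \xrightarrow{\nabla} Y$, where $\nabla$ is the fold map, together with the easy computation $R_\nabla(X \coprod Y) \cong X$ (Weil restriction along the fold map is the fiber product of the two pieces, here $X\times_Y Y$). Chasing an element $(a,b) \in M(X\coprod Y) \cong M(X)\times M(Y)$ around that diagram, the left-bottom composite gives $N_\nabla(tr_g a, b) = tr_g(a)\cdot b$, while the top-right composite gives $tr_{R_\nabla(g\coprod\id)}N_p e^*(a,b) = tr_g(a\cdot g^*b)$, since $e^*(a,b) = (a, g^*b)$ on $X \coprod X$, the norm along the fold $p\colon X\coprod X \to X$ is the product, and $R_\nabla(g\coprod\id_Y) = g$. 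This uses only the axioms, works over any base, and needs no descent or group completion.
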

\begin{proof}
This follows from the distributivity law applied to the exponential diagram
generated by $X \coprod Y \xrightarrow{g \coprod \id_Y} Y \coprod Y
\xrightarrow{\nabla} Y$, where $\nabla$ is the fold map, using the computation that
$R_f(X \coprod Y) \wequi X \times_Y Y \wequi X$ (recall that Weil restriction
along a fold map is just the product).
\end{proof}

\begin{definition}
We say that $M \in T^\naive(k)$ is \emph{group-complete} if the abelian semigroup
$(M(X), +)$ is an abelian group for all $X \in \FEt_k$. We write $T^\naive_{gc}(k)$
for the full subcategory of group-complete functors.
\end{definition}

\begin{theorem}[Tambara] \label{thm:tambara-gc}
Let $k$ be a connected scheme.

The inclusion $T^\naive_{gc}(k) \to T^\naive(k)$ has a left adjoint $M \mapsto M^+$, which
satisfies $M^+(X) = M(X)^+$, where $M(X)^+$ denotes the usual group-completion
of the abelian semigroup $(M(X), +)$.
\end{theorem}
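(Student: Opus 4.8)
The plan is to reduce everything to Tambara's original theorem for finite $G$-sets (equivalently, for $\FEt$ over a connected scheme, where the fundamental groupoid plays the role of $G$). Since $k$ is connected, $\FEt_k$ is equivalent to the category of finite continuous $\pi$-sets for $\pi = \pi_1^{\et}(k)$ (a profinite group), and under this equivalence a naive motivic Tambara functor is exactly a Tambara functor in Tambara's sense (semiring-valued, with additive transfers, multiplicative norms, base change and distributivity). Tambara proves that the group-completion of the additive structure, performed pointwise, carries canonical transfers and norms and is again a Tambara functor, and that it is the universal group-complete one. So the entire content is that this classical statement transports across the equivalence $\FEt_k \simeq \pi\text{-}\mathrm{Set}^{\mathrm{fin}}$, together with the observation that the sheaf condition is automatic in this setting (a presheaf on $\FEt_k$ sending coproducts to products and $\emptyset$ to $*$ is automatically a Zariski sheaf, since Zariski covers of a finite étale $k$-scheme are refined by clopen decompositions).

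First I would recall, following the text already in the excerpt, that the sheaf condition on $M$ is equivalent to the two Segal-type conditions $M(X \coprod Y) \xrightarrow{\sim} M(X) \times M(Y)$ and $M(\emptyset) = *$, so that $T^\naive(k)$ is literally the category of semiring-valued Tambara functors on the orbit category of $\pi$ (in the sense of \cite{tambara1993multiplicative}, extended from finite groups to profinite groups by writing everything as a filtered colimit over finite quotients, or simply by noting every finite $\pi$-set factors through a finite quotient). Next I would define $M^+$ pointwise by $M^+(X) := M(X)^+$, the Grothendieck group of the abelian monoid $(M(X),+)$; this is functorial in $X$ for restrictions and for additive transfers since both are additive. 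The multiplicative structure and the norms $N_f$ are \emph{not} additive, so extending them to $M^+$ is the only nontrivial point: one uses the projection formula (Lemma \ref{lemm:projn-formula}) and the distributivity law to check that $x \cdot (-)$ and $N_f$ pass to formal differences. This is exactly what Tambara does; I would cite \cite{tambara1993multiplicative} for the verification rather than reproduce it, remarking only that all the identities needed are instances of conditions (1)--(3), which hold in $M$ by hypothesis and are preserved under the (exact, hence group-completion-compatible) pointwise operations.

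Having produced $M^+ \in T^\naive_{gc}(k)$ together with a map $M \to M^+$ of naive Tambara functors (the pointwise unit of group-completion, which commutes with $f^*$, $tr_f$, $N_f$ by the same verification), the universal property is immediate: given $N \in T^\naive_{gc}(k)$ and a morphism $M \to N$, for each $X$ the composite $M(X) \to N(X)$ factors uniquely through $M(X)^+ = M^+(X)$ since $N(X)$ is already a group, and uniqueness of the factorisation forces compatibility of the resulting maps $M^+(X) \to N(X)$ with all restrictions, transfers and norms (a map out of a group-completion is determined by its restriction to the monoid). Hence $M \mapsto M^+$ is left adjoint to the inclusion.

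The main obstacle is purely bookkeeping: confirming that Tambara's argument, stated in \cite{tambara1993multiplicative} for a finite group $G$, applies verbatim to the profinite fundamental group of a connected scheme. This is harmless because every object of $\FEt_k$ and every morphism among finitely many of them is defined over a finite Galois subextension, so each instance of the axioms and each construction takes place in the Tambara category of an honest finite group; nothing in the group-completion construction is sensitive to this. I would phrase the proof as ``this is Tambara's theorem \cite{tambara1993multiplicative} applied to the category $\FEt_k$, which for connected $k$ is (pro-)equivalent to the category of finite $\pi_1^{\et}(k)$-sets,'' and spend at most a sentence on the reduction to finite quotients.
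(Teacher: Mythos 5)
Your overall route is the same as the paper's: identify naive motivic Tambara functors over connected $k$ with Tambara's semi-TNR functors for the profinite group $Gal(k)$ via Grothendieck's Galois theory, reduce to finite quotients (every object and morphism of $\FEt_k$ comes from $\Fin_G$ for some finite Galois quotient $G$, and enlarging the quotient does not change the induced norms), define $M^+$ by pointwise group completion, and cite \cite{tambara1993multiplicative} for the fact that the norms descend; the observation that the sheaf condition amounts to $M(X \coprod Y) \cong M(X) \times M(Y)$ and $M(\emptyset)=*$ is also how the paper sets things up.

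The one genuine gap is in your verification of the universal property. You claim that the unique additive factorisation $\phi^+: M^+ \to N$ of a morphism $\phi: M \to N$ (with $N$ group-complete) automatically commutes with norms ``because a map out of a group-completion is determined by its restriction to the monoid''. That principle only applies to \emph{additive} maps: it does handle $f^*$ and $tr_f$, but the two maps you must compare for norms, $N_f \circ \phi^+$ and $\phi^+ \circ N_f : M^+(X) \to N(Y)$, are not additive, so agreement on the image of $M(X)$ does not formally force agreement on formal differences $a-b$. Verifying this is precisely the non-trivial content of Tambara's universal-property statement: one has to expand $N_f(a-b)$ via the distributivity law into norms, transfers and restrictions of honest elements of $M$, and this is what \cite[Theorem 6.1 and Proposition 6.2]{tambara1993multiplicative} supply. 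The paper accordingly does not argue as you do; it invokes Tambara's result that $M^+|_{\Fin_G}$ is the \emph{universal} map from $M|_{\Fin_G}$ to a TNR-functor and checks norm-compatibility of the induced map by restricting to $\Fin_G$ for varying $G$. Your proof is repaired by the same move: since you already cite Tambara's theorem, use its universal-property half at this step instead of the ``determined on the monoid'' argument, which is not valid for the non-additive norm maps.
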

\begin{proof}
A naive motivic Tambara functor is essentially the same as a semi-TNR functor
in the
sense of Tambara \cite[Section 2]{tambara1993multiplicative},
for the \emph{profinite} group $G=Gal(k)$. Tambara only treats finite groups,
but the extension to profinite groups is immediate. We spell out the details.

Let $L/k$ be a finite Galois extension with group $G$. Then the category $\Fin_G$ of finite
$G$-sets is a full subcategory of $\FEt_k$, by Grothendieck's Galois theory. The
restriction $M|_{\Fin_G}$ defines a semi-TNR functor. It follows from \cite[Theorem 6.1
and Proposition 6.2]{tambara1993multiplicative} that $M^+|_{\Fin_G}$ has a unique
structure of a TNR-functor such that the canonical map $(M \to M^+)|_{\Fin_G}$ is a
morphism of semi-TNR functors, and that $M^+|_{\Fin_G}$ is the universal map from
$M|_{\Fin_G}$ to a TNR-functor.

Now suppose that $L'/L/k$ is a bigger Galois extension, with group $G'$. Then
$(M|_{\Fin_{G'}})|_{\Fin_G} = M|_{\Fin_G}$ and consequently the norms on
$M^+|_{\Fin_{G'}}$
obtained by the above universal property, when further restricted to $\Fin_G$, coincide with
the norms obtained on $M^+|_{\Fin_G}$ directly. Now let $f: X \to Y \in \FEt_k$.
Then there exists a finite Galois extension $L/k$ with group $G$ such that
$f$ is in the image of $\Fin_G \to
\FEt_k$, and hence
we obtain a norm map $N_f$. By the above universal property, extending $L$ does
not change this norm, so in particular $N_f$ is well-defined independent of the
choice of $L$. This defines the structure of a naive motivic Tambara functor
on $M^+$, since all the required conditions can be checked after restriction to
$\Fin_G$ for varying $G$.

Let $A \in T^\naive_{gc}(k)$ and $F: M \to A$ be any morphism of Tambara functors.
Then there is a unique morphism of sheaves of additive abelian groups $F^+: M^+
\to A$. It remains to show that $F^+$ is a morphism of Tambara functors, i.e.
preserves norms. This can be checked on $F^+|_{\Fin_G}$ for varying $G$,
where it holds by Tambara's result. This concludes the proof.
\end{proof}

Let us also include for the convenience of the reader a proof of the following
well-known fact.

\begin{lemma} \label{lemm:gc-easy}
Let $A$ be an abelian group and $A_0 \subset A$ an abelian semigroup which
generates $A$ as an abelian group. Then the induced map $A_0^+ \to A$ is an
isomorphism.
\end{lemma}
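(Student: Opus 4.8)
The plan is to give the standard group-completion argument, using only that $A$ is abelian, $A_0 \subset A$ is a sub-semigroup, and $A_0$ generates $A$ as a group. First I would recall that the group completion $A_0^+$ is the quotient of $A_0 \times A_0$ (thought of as formal differences $(a,b) \mapsto a - b$) by the congruence generated by $(a+c,\, b+c) \sim (a,b)$, together with the universal property: any semigroup homomorphism from $A_0$ to an abelian group factors uniquely through $A_0^+$. Applying this to the inclusion $A_0 \hookrightarrow A$ produces the canonical map $\iota \colon A_0^+ \to A$, which I claim is an isomorphism.

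Surjectivity is immediate: since $A_0$ generates $A$ as a group, every element of $A$ is a finite sum of elements of $A_0$ and their inverses, hence — $A$ being abelian — of the form $a - b$ with $a, b \in A_0$ (group the positive and negative terms and use that $A_0$ is closed under addition); such an element is the image of the class of $(a,b)$. For injectivity, suppose $(a,b)$ and $(a',b')$ have the same image in $A$, i.e.\ $a - b = a' - b'$ in $A$, equivalently $a + b' = a' + b$ in $A$. But $a + b'$ and $a' + b$ both lie in $A_0$, so this is an equality \emph{inside} $A_0$; by the defining relations of the group completion it gives $(a + b',\, 0) \sim$ \dots more carefully, the relation $a + b' = a' + b$ in $A_0$ forces $(a,b) \sim (a',b')$ in $A_0^+$ because adding $b + b'$ to both coordinates of $(a,b)$ and of $(a',b')$ yields $(a + b + b',\, b + b')$ and $(a' + b + b',\, b' + b)$, which have equal first coordinates (namely $a + b + b' = a' + b + b'$) and equal second coordinates, hence represent the same class; and each of these is equivalent to the original by the cancellation relation defining $A_0^+$. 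Thus $\iota$ is injective.

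There is essentially no obstacle here: the only mild subtlety is being careful that the congruence defining $A_0^+$ may a priori be coarser than ``$(a,b)\sim(a',b')$ iff $a+b'=a'+b$ in $A_0$'' when $A_0$ is not itself cancellative, but the argument above only uses the relations that certainly \emph{do} hold, so it goes through regardless. Alternatively, one can phrase the whole thing via the universal property directly: the inclusion $A_0 \hookrightarrow A$ and the identity of $A_0^+$ exhibit both $A$ (by the generation hypothesis, it satisfies the same universal property against abelian groups receiving $A_0$ compatibly — checking this is the one line that uses surjectivity) and $A_0^+$ as initial, so they are canonically isomorphic. I would write out the explicit version above since it is shortest and entirely elementary.
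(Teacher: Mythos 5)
Your route is genuinely different from the paper's: you work with the explicit Grothendieck construction of $A_0^+$ as formal differences and prove the canonical map is surjective and injective, whereas the paper never opens up the construction at all — it checks that $A$ itself satisfies the universal property of the group completion, i.e.\ that restriction $\Hom(A,B)\to\Hom(A_0,B)$ is bijective for every abelian group $B$ (injectivity since every $a\in A$ is $a_1-a_2$ with $a_i\in A_0$; surjectivity by checking that $f(a):=f_0(a_1)-f_0(a_2)$ is independent of the chosen decomposition). The alternative you sketch in your last sentences is essentially the paper's argument; its advantage is that it is model-independent, while your version buys an explicit description at the cost of exactly the care about the defining congruence that you yourself flag.

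There is, however, a concrete error in your injectivity step as written. Adding $b+b'$ to both coordinates of $(a,b)$ gives $(a+b+b',\,b+b+b')$, not $(a+b+b',\,b+b')$, and the equality you assert, $a+b+b'=a'+b+b'$, is equivalent (cancelling in the group $A$) to $a=a'$, which is not given: what follows from $a-b=a'-b'$ is only $a+b'=a'+b$. The standard repair is immediate: add $b'$ to both coordinates of $(a,b)$ and $b$ to both coordinates of $(a',b')$. Since $a+b'=a'+b$, and both sides lie in $A_0$ so this is an identity in $A_0$, the resulting pairs are literally equal, $(a+b',\,b+b')=(a'+b,\,b'+b)$, whence $(a,b)\sim(a',b')$ using only the cancellation relations that certainly hold in $A_0^+$. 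With that correction (and the small observation that an element such as $a\in A_0$ itself, or $0$, can be written as a difference of two elements of $A_0$, e.g.\ $a=(a+a)-a$, so the surjectivity step covers sums with only positive or only negative terms) your proof is complete.
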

\begin{proof}
It suffices to verify the universal property. Thus let $B$ be an abelian group.
If $f: A \to B$ is a homomorphism and $a \in A$, then there exist $a_1, a_2 \in
A_0$ with $a = a_1 - a_2$. Consequently $f(a) = f(a_1) - f(a_2)$ and $\Hom(A, B)
\to \Hom(A_0, B)$ is injective. To prove that $\Hom(A, B) \to \Hom(A_0, B)$ is
surjective, let $f_0 \in \Hom(A_0, B)$. Given $a \in A$, pick $a_1, a_2 \in A_0$
with $a = a_1 - a_2$, and put $f(a) = f_0(a_1) - f_0(a_2)$. I claim that this is
independent of the choices. Indeed if $a_1', a_2' \in A_0$ with $a_1' - a_2' =
a$, then $a_1 + a_2' = a_1' + a_2$, and hence $f_0(a_1) + f_0(a_2') = f_0(a_1 +
a_2') = f_0(a_1' + a_2) = f_0(a_1') + f_0(a_2)$, which implies the claim. From
this it easily follows that $f \in \Hom(A, B)$. This concludes the proof.
\end{proof}

We now investigate the localization of Tambara functors.

\begin{lemma} \label{lemm:norms-units}
Let $k$ be a connected scheme and $f: X \to Y \in \FEt_k$. Suppose $M \in
T^\naive_{gc}(k)$ and $n > 0$. Then $N_f(n \cdot 1_{M(X)}) \in (M(Y)[1/n])^\times$.
\end{lemma}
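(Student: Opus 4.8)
The statement asserts that $N_f(n\cdot 1_{M(X)})$ becomes a unit after inverting $n$ in $M(Y)$. The natural approach is to exploit the multiplicativity of $N_f$ together with the projection formula (Lemma \ref{lemm:projn-formula}), comparing the norm of $n\cdot 1_{M(X)}$ with the transfer of $1$. First I would observe that $n\cdot 1_{M(X)} = tr_{c}(1_{M(X^{\coprod n})})$ fails in general because $X$ need not be disconnected; instead the cleanest route is to reduce to the case where $f$ is a finite \'etale cover of a particularly simple shape. Since $k$ is connected and $f\colon X\to Y$ is finite \'etale, after passing to a finite Galois extension $L/k$ trivializing the relevant data we may work inside $\Fin_G$ for $G=Gal(L/k)$, and hence reduce to a question about (semi-)TNR functors for the finite group $G$. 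By Galois descent along the fold maps and base change (condition (2)), it suffices to treat the case of a single transitive $G$-set, i.e. $X = Spec(L')$ for an intermediate field.

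\textbf{Key steps.} (1) Write $1_{M(X)} = p^*1_{M(Y)}$ where $p=f$ (the unit pulls back to the unit since $f^*$ is a ring map), so $n\cdot 1_{M(X)} = n\cdot p^*1_{M(Y)}$. (2) I would like to relate $N_f(n\cdot p^*1)$ to something visibly invertible. The trick: consider the "augmented" cover $f' \colon X\sqcup Y \to Y$ (via $f$ on the first summand and $\id$ on the second), so that $M(X\sqcup Y)\wequi M(X)\times M(Y)$ and a norm of the pair gives, by the behaviour of norms under coproducts and Lemma \ref{lemm:extra-norms}-type reasoning, a relation of the form $N_{f'}(a,b) = $ (something) involving $N_f(a)\cdot b$. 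Applying this with $a = n\cdot 1_{M(X)}$ should produce an equation exhibiting $N_f(n\cdot 1)$ as a factor of an element whose other factor we control. (3) Alternatively, and I think more robustly, use the projection formula directly: compute $tr_f(1_{M(X)})\cdot N_f(1_{M(X)})$ type identities, or note $tr_f(p^* b) = tr_f(1)\cdot b$, and combine with the distributivity law applied to the exponential diagram for $X\xrightarrow{\id}X\xrightarrow{f}Y$ to get a polynomial identity satisfied by $N_f(n\cdot 1)$ modulo the ideal generated by $tr_f(1)$. (4) Finally, invert $n$: the point is that $N_f(n\cdot 1) \equiv n^{d} \pmod{(\text{transfer terms})}$ where $d$ is the degree, and the transfer terms are themselves manageable, so $N_f(n\cdot 1)$ is a unit in $M(Y)[1/n]$ because it differs from a power of $n$ by a nilpotent-like correction — more precisely, one shows it divides a power of $n$ and a power of $n$ divides it.

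\textbf{The main obstacle.} The hard part will be step (2)–(3): extracting the \emph{exact} polynomial identity relating $N_f(n\cdot 1_{M(X)})$ to powers of $n$ and to transfer terms. This is precisely the kind of computation that in the classical (Tambara, finite group) setting underlies the statement that norms of units are units in suitable localizations; the subtlety is that $N_f$ is only multiplicative, not additive, so expanding $N_f(1+1+\cdots+1)$ requires the full distributivity law, and one must track the combinatorics of the exponential diagram carefully. I expect the cleanest formulation is: reduce via Galois theory to $\Fin_G$, then invoke (or re-derive) the classical fact for Tambara functors on finite groups that $N_f$ sends $\Z_{>0}\cdot 1$ into elements that are units after inverting the integer in question — this is the content of Tambara's multiplicative structure, and the reduction makes it applicable. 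Thus the real work is the reduction to the classical case, after which the result is known.
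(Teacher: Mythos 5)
Your first reduction (Grothendieck's Galois theory, passing to TNR-functors for a finite group $G$) is exactly the paper's first step. But the core of the lemma --- that $N_f(n\cdot 1_{M(X)})$ actually becomes invertible after inverting $n$ --- is left as a gap: your steps (2)--(4) sketch a hoped-for polynomial identity ``$N_f(n\cdot 1)\equiv n^{d}$ modulo transfer terms'' and then assert that one ``shows it divides a power of $n$ and a power of $n$ divides it,'' but you never produce that argument, and the congruence alone does not suffice (the ideal generated by transfers is not nilpotent, so being $n^d$ up to transfer terms does not give a unit in $M(Y)[1/n]$). Your fallback, to ``invoke the classical fact for Tambara functors on finite groups,'' is essentially an appeal to the statement being proved, with no reference or derivation, so as written the proposal is circular at the decisive point.

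The paper closes this gap with a different and much shorter idea that your proposal is missing: the category of group-complete (TNR) Tambara functors for a finite group is symmetric monoidal with \emph{initial object} the Burnside ring functor $A$. The element $n\cdot 1_{M(X)}$ is the image of $n\cdot 1_{A(X)}$ under the unique morphism $A\to M$; since morphisms of Tambara functors commute with norms and induce ring homomorphisms $A(Y)[1/n]\to M(Y)[1/n]$, which carry units to units, it suffices to prove the claim for $A$ itself, where it is a concrete computation in the Burnside ring cited from \cite[Lemma 12.9]{bachmann-norms}. If you want to salvage your direct approach, you would have to actually carry out the distributivity-law expansion of $N_f(1+\cdots+1)$ and prove invertibility in the Burnside ring (i.e.\ redo that cited lemma); otherwise the initiality argument is the missing ingredient.
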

\begin{proof}
Via Grothendieck's Galois theory, we reduce to the analogous statement for
TNR-functors for some finite group $G$.
It is known that this category is symmetric monoidal, with initial object the Burnside
ring functor $A$. It is thus enough to prove this result for $A$, which is done
in \cite[Lemma 12.9]{bachmann-norms}.
\end{proof}

\begin{corollary} \label{corr:T3-loc}
Let $T^\naive_{gc}(k)[1/n]$ denote the full subcategory of $T^\naive_{gc}(k)$ on those
functors $M$ such that $n \in M(X)^\times$ for all $X \in \FEt_k$. Then the
inclusion $T^\naive_{gc}(k)[1/n] \to T^\naive_{gc}(k)$ has a left adjoint $M \mapsto
M[1/n]$, such that for $X \in \FEt_k$ we have $M[1/n](X) = M(X)[1/n]$.
\end{corollary}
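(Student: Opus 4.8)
The plan is to exhibit $M \mapsto M[1/n]$, defined pointwise by $M[1/n](X) = M(X)[1/n]$ for $X \in \FEt_k$, as a well-defined object of $T^\naive_{gc}(k)[1/n]$ and to check the universal property. The main point is that the naive maps $f^*$, $tr_f$, $N_f$ all descend to the localizations; for $f^*$ and $tr_f$ this is automatic since they are additive and hence $\Z[1/n]$-linear after inverting $n$, but for the non-additive $N_f$ it requires an argument.

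First I would record that $M[1/n]$ is again a Zariski sheaf of sets: localization $A \mapsto A[1/n]$ is a filtered colimit (over multiplication-by-$n$) and hence commutes with the finite products and terminal object appearing in the sheaf condition on $\FEt_k$. Next, for a finite étale $f: X \to Y$ I would extend $N_f: M(X) \to M(Y)$ to $\tilde N_f: M(X)[1/n] \to M(Y)[1/n]$. The key computation is the standard one for multiplicative norms along degree-$d$ covers: writing $a/n^j \in M(X)[1/n]$, one has (after passing to a Galois splitting field and using the base-change formula, exactly as in the proof of Lemma \ref{lemm:norms-units}) a formula of the shape $N_f(a + n \cdot b) = N_f(a) + n \cdot (\text{something})$, so that $N_f$ already sends the kernel of $M(X) \to M(X)[1/n]$ into the kernel of $M(Y) \to M(Y)[1/n]$, and moreover $N_f$ applied to $n^{\deg f} \cdot x$ differs from $n^{\deg f} N_f(x)$ by a bounded power of $n$; combined with Lemma \ref{lemm:norms-units}, which says $N_f(n \cdot 1) \in (M(Y)[1/n])^\times$, this shows $N_f$ descends to a well-defined map $\tilde N_f$ on $M(X)[1/n]$, and that $n$ is invertible in each $M[1/n](X)$. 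The cleanest way to organize this is again to reduce, via Grothendieck's Galois theory, to TNR-functors for a finite group $G$, where one may cite the corresponding statement for the localization of $G$-Tambara functors (or simply verify it on the Burnside functor, which is the initial object, and transport along the unique map).

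Once $\tilde N_f$, $f^*$ and $tr_f$ are defined on $M[1/n]$, the axioms (1)–(3) of a naive motivic Tambara functor for $M[1/n]$ follow from those for $M$ by naturality of localization: each identity is an equation between composites of these maps, and it holds after applying the (injective on the relevant subobjects, surjective onto $M[1/n]$) localization maps $M(-) \to M(-)[1/n]$, hence holds in $M[1/n]$. This makes $M[1/n] \in T^\naive_{gc}(k)[1/n]$, and the canonical morphism $\eta_M: M \to M[1/n]$ is a map of Tambara functors. For the universal property, given $A \in T^\naive_{gc}(k)[1/n]$ and a morphism $F: M \to A$, the unique factorization $F^+: M[1/n] \to A$ exists pointwise as a map of sets (resp. of abelian groups, by Lemma \ref{lemm:gc-easy} applied to each $M(X)[1/n]$, whose underlying semigroup is generated by the image of $M(X)$); that $F^+$ commutes with $f^*$, $tr_f$, $N_f$ is checked by precomposing with $\eta_M$, where it reduces to the corresponding property of $F$ together with surjectivity of $M(X) \to M[1/n](X)$ onto a generating subset and the fact that both $F^+ \circ \tilde N_f$ and $\tilde N_f \circ F^+$ are determined by their restriction along $\eta_M$.

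The step I expect to be the main obstacle is the descent of $N_f$: unlike $tr_f$ it is not additive, so one genuinely needs the arithmetic input that a multiplicative norm along a cover of degree $d$ is ``polynomial of degree $d$'' together with $N_f(n\cdot 1)$ being a unit after inverting $n$ — this is precisely what Lemma \ref{lemm:norms-units} supplies, and the rest is bookkeeping. Everything else is formal manipulation with filtered colimits and universal properties.
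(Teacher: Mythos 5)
Your overall strategy (pointwise localization, with Lemma \ref{lemm:norms-units} as the key input for the norms) is the same as the paper's, but two of your central justifications do not hold up. First, the claimed expansion $N_f(a+n\cdot b)=N_f(a)+n\cdot(\text{something})$ is false in general: for the Burnside TNR-functor of $C_2$ (the initial group-complete object, exactly the one Lemma \ref{lemm:norms-units} reduces to) one has $N(0+2\cdot 1)=N(2)=2+[C_2/e]$, which does not lie in $N(0)+2\,A(C_2)=2\,A(C_2)$; passing to a splitting field does not help, since restriction is not injective, and likewise $N_f(n^{\deg f}x)$ does not differ from $n^{\deg f}N_f(x)$ ``by a bounded power of $n$'' in any usable sense. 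The mechanism that actually works is purely multiplicative: $N_f(n^k x)=N_f(n\cdot 1)^k\,N_f(x)$, and since $N_f(n\cdot 1)$ becomes a unit in $M(Y)[1/n]$ by Lemma \ref{lemm:norms-units}, one simply \emph{defines} $N_f(x/n^k):=N_f(x)/N_f(n\cdot 1)^k$ and checks this is well defined — this is the paper's definition, and your additive/degree-$d$ reasoning should be replaced by it.

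Second, the axioms for $M[1/n]$ do not follow ``by naturality of localization'': the map $M(X)\to M(X)[1/n]$ is \emph{not} surjective (elements $x/n^k$ are not in its image), and since $N_f$ is not additive you cannot propagate identities from the image by $\Z[1/n]$-linearity. Composition and base change for the extended norms can still be checked by a direct computation with fractions using multiplicativity, but the distributivity law genuinely requires new input: with the definition above one gets $N_f(tr_q(x)/n^k)=\big(tr_{R_f(q)}N_p e^*(x)\big)/N_f(n^k)$, and to match this with $tr_{R_f(q)}\big(N_p e^*(x/n^k)\big)$ one must first extend the projection formula (Lemma \ref{lemm:projn-formula}) to $M[1/n]$ and then verify $N_p(n^k)=R_f(q)^*N_f(n^k)$ via the base change formula — this is exactly the computation the paper carries out and which your outline omits. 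The same caveat applies to your universal-property step: compatibility of $F^+$ with norms is not ``determined by restriction along $\eta_M$'' for non-additive operations; it must be checked on fractions, using multiplicativity of $F^+$ and of the norms (or, as you suggest as a fallback, by reducing via Galois theory to a citable localization statement for $G$-Tambara functors, which would be a legitimate alternative if a precise reference is supplied).
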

\begin{proof}
We wish to make $M[1/n]$ into a Tambara functor by defining $N(x/n^k) =
N(x)/N(n)^k$, and $tr(x/n^k) = tr(x)/n^k$, and similarly for pullback.
Since transfer and pullback are additive, it is clear that they extend as
stated; the formula for the norm is well-defined by Lemma
\ref{lemm:norms-units}.
In order to check that this is a Tambara functor, the only difficulty is to
check that the distributivity law remains valid. Let $f: X \to Y \in \FEt_k$.
Note that that for $a \in M(X), b \in M(Y)$ we have $tr_f(a/n^k \cdot
f^*(b/n^l)) = tr_f(a \cdot f^*(b))/n^{k+l} = tr_f(a) b/n^{k+l} = tr_f(a/n^k)
b/n^l$ by definition and Lemma \ref{lemm:projn-formula}, i.e. the projection formula still
holds for $M[1/n]$.
Now let $A \xrightarrow{q} X
\xrightarrow{f} Y \in \FEt_k$ generate an exponential diagram, and $x \in M(A)$.
We have
$N_f(tr_q(x/n^k))=N_f(tr_q(x)/n^k) = N_f(tr_q(x))/N_f(n^k)$. Since $M$ satisfies
the distributivity law, this is the same as $(tr_{R_f(q)}N_pe^*(x))/N_f(n^k)$. 
Since $M[1/n]$ satisfies the projection formula (as noted above),
it remains to show that $N_p(n^k) = R_f(q)^*
N_f(n^k)$. This follows from the base change formula.

It is clear
that the canonical map $M \to M[1/n]$ is a morphism of Tambara functors, which
is the initial morphism to an object of $T^\naive_{gc}(k)[1/n]$. This concludes the
proof.
\end{proof}

Our main reason for studying naive motivic Tambara functors is that
they can be obtained by restriction from motivic Tambara functors of the first
kind. Indeed let $k$ be a field again and $M \in T^1_\scr{C}(k)$. Let $X \in
\scr C$. Then $\FEt_X \subset \scr C$ and by restriction we obtain $M|_{\FEt_X}
\in T^\naive(X)$. This observation allows us to reduce the following Corollary to
results about naive motivic Tambara functors.

\begin{corollary} \label{corr:T1-loc}
Let $\scr C \subset_\fet \Sm_k$ (where $k$ is again a perfect field), and assume that
$\scr C$ is closed under passing to summands (i.e. if $X \coprod Y \in \scr C$
then also $X \in \scr C$). For $n > 0$ denote by $T^1_\scr{C}(k)[1/n]$ the full
subcategory on those $M$ such that $U_1 M \in \HI_0(k)[1/n]$. Then the inclusion
$T^1_\scr{C}(k)[1/n] \to T^1_\scr{C}(k)$ has a left adjoint $M \mapsto M[1/n]$
which satisfies $U_1(M[1/n]) \wequi (U_1 M)[1/n]$.
\end{corollary}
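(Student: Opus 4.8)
The plan is to construct the localization $M[1/n]$ directly, using the explicit presheaf-level formula $U_1(M[1/n])(X) = (U_1 M)(X)[1/n]$ on all of $\scr C$, and to equip it with norms by transporting the construction of Corollary \ref{corr:T3-loc}. First I would check that the presheaf $X \mapsto (U_1 M)(X)[1/n]$ underlies an effective homotopy module in $\HI_0(k)[1/n]$: localization at $n$ is an exact endofunctor of $\HI_0(k)$ (it is a filtered colimit, namely the colimit of $U_1 M \xrightarrow{n} U_1 M \xrightarrow{n} \cdots$, and $\HI_0(k)$ has filtered colimits computed on underlying Nisnevich sheaves by \cite[Proposition 5(3)]{bachmann-very-effective}), so $(U_1 M)[1/n] \in \HI_0(k)$ and it is visibly a presheaf of $\Z[1/n]$-modules, hence lies in $\HI_0(k)[1/n]$. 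The transfers $tr_g$ for $g$ finite étale are inherited by $\Z[1/n]$-linearity.

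Next I would define the norms. For $f : X \to Y \in \scr C$ finite étale, set $N_f(x/n^k) = N_f(x)/N_f(n^k \cdot 1_{M(X)})$, mimicking Corollary \ref{corr:T3-loc}. The key input is that $N_f(n^k \cdot 1_{M(X)})$ is invertible in $(U_1 M)(Y)[1/n]$, which I would deduce from Lemma \ref{lemm:norms-units} as follows: restricting $M$ along $\FEt_Y \subset \scr C$ gives a naive motivic Tambara functor $M|_{\FEt_Y} \in T^\naive(Y)$ (and it is group-complete since $U_1 M$ is a homotopy module, hence a presheaf of abelian groups), so Lemma \ref{lemm:norms-units} applies over the base scheme $Y$ — here I use that $Y$ need not be connected, but the statement is checked componentwise, or alternatively one reduces to $Y$ connected since $\scr C$ is closed under passing to summands. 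Well-definedness of $N_f(x/n^k)$ then follows exactly as in Corollary \ref{corr:T3-loc}.

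The remaining work is to verify that $(U_1 M)[1/n]$ with these norms satisfies conditions (1)--(3) of Definition \ref{def:tambara-1}. Condition (1) (functoriality) and condition (2) (base change) are straightforward from the corresponding properties of $M$ and the base change formula applied to $n^k \cdot 1$, together with multiplicativity of $N_f$ on the units $N_f(n^k \cdot 1)$. Condition (3), the distributivity law, is the main obstacle, but it is handled exactly as in the proof of Corollary \ref{corr:T3-loc}: one first checks the projection formula for $(U_1 M)[1/n]$ (using Lemma \ref{lemm:projn-formula} for $M$ and $\Z[1/n]$-linearity of transfers), and then for $A \xrightarrow{q} X \xrightarrow{f} Y$ in $\scr C$ generating an exponential diagram one computes $N_f(tr_q(x/n^k)) = N_f(tr_q(x))/N_f(n^k \cdot 1)$, applies distributivity for $M$, and uses the projection formula plus the base-change identity $N_p(n^k \cdot 1) = R_f(q)^* N_f(n^k \cdot 1)$ to conclude. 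Finally, the canonical map $M \to M[1/n]$ is evidently a morphism in $T^1_\scr{C}(k)$, and its universal property into $T^1_\scr{C}(k)[1/n]$ follows from the universal property of $\Z[1/n]$-localization applied objectwise on $\scr C$, since a morphism of homotopy modules out of $(U_1 M)[1/n]$ respecting norms is the same as a norm-respecting morphism out of $U_1 M$ that is $\Z[1/n]$-linear, and the latter is automatic onto a target in $\HI_0(k)[1/n]$. I expect the only genuinely delicate point to be bookkeeping the invertibility of $N_f(n^k \cdot 1)$ uniformly over all of $\scr C$ (not just over a field), which is why the reduction to Lemma \ref{lemm:norms-units} via restriction to $\FEt_Y$ is the crux.
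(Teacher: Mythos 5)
Your proposal is correct and follows essentially the same route as the paper: the crux in both is the restriction $M|_{\FEt_Y} \in T^\naive(Y)$ (with $Y$ reduced to being connected via the summand-closure hypothesis) so that Lemma \ref{lemm:norms-units} gives invertibility of $N_f(n\cdot 1)$, after which the norms on $M[1/n]$ are defined by the same formula and distributivity is handled as in Corollary \ref{corr:T3-loc}. You merely inline the distributivity computation and spell out routine checks (that $(U_1M)[1/n]\in\HI_0(k)[1/n]$ and the universal property) that the paper leaves implicit.
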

\begin{proof}
As in the proof of Corollary \ref{corr:T3-loc}, the only difficulty is in
extending the norms to $M[1/n]$ and checking the distributivity law.
We need to prove that if $f: X \to Y \in \scr
C$ is finite étale, then $N_f(n \cdot 1_{M(X)}) \in (M(Y)[1/n])^\times$. Since
$M$ is a sheaf and $\scr C$ is closed under passing to summands, we may assume
that $Y$ is connected. In this case the result follows from
Lemma \ref{lemm:norms-units} applied to $M|_{\FEt_Y}$.
Now to verify the distributivity law, we may again restrict to $Y$ connected,
whence this follows from Corollary \ref{corr:T3-loc}, again applied to
$M|_{\FEt_Y}.$
\end{proof}

\section{Effective homotopy modules and sheaves with generalized transfers}
\label{sec:effective-homotopy-modules}

In this section we provide a more explicit description of the category
$\HI_0(k)$ of effective homotopy modules. A similar result (in the non-effective
case) was obtained by different means in \cite[Theorem
9.11]{ananyevskiy2017framed}.
We begin with some abstract preparation.

\begin{lemma} \label{lemm:t-str-mod}
Let $\scr C$ be a stable, presentably symmetric monoidal $\infty$-category.
Suppose given an accessible $t$-structure on $\scr C$ such that
that $\scr C_{\ge 0} \otimes \scr C_{\ge 0} \subset \scr C_{\ge 0}$. Let
$A \in \CAlg(\scr C_{\ge 0})$. Then:
\begin{enumerate}
\item The $\infty$-category $A\Mod$ has a unique $t$-structure such that $U:
  A\Mod \to \scr C$ is $t$-exact. This $t$-structure is accessible.
\item If $\pi_0(\1) \to \pi_0(A)$ is an isomorphism (in $\scr C^\heart$), then
  $U^\heart: A\Mod^\heart \to \scr C^\heart$ is an equivalence.
\end{enumerate}
\end{lemma}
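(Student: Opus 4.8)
The plan is to construct the $t$-structure on $A\Mod$ by transporting it along the free-forgetful adjunction $F: \scr C \leftrightarrows A\Mod : U$, using the standard machinery for generating $t$-structures in a presentable setting. First I would observe that $A\Mod$ is presentable (it is modules over an algebra in a presentably symmetric monoidal category) and stable. For part (1), I would \emph{define} $(A\Mod)_{\ge 0}$ to be the smallest full subcategory containing $F(\scr C_{\ge 0})$ and closed under colimits and extensions; by \cite[Proposition 1.4.4.11]{lurie-ha} this is the connective part of an accessible $t$-structure, provided the relevant subcategory is generated under colimits and extensions by a set, which holds because the $t$-structure on $\scr C$ is accessible and $F$ preserves colimits. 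The key point is then to check that $U$ is $t$-exact for this $t$-structure, and that this property characterizes it uniquely. Right $t$-exactness of $U$ is automatic: $U$ preserves colimits (it is a left adjoint, being also comonadic, or directly) and extensions, and $UF(\scr C_{\ge 0}) = A \otimes \scr C_{\ge 0} \subset \scr C_{\ge 0} \otimes \scr C_{\ge 0} \subset \scr C_{\ge 0}$ by the hypothesis that $A \in \CAlg(\scr C_{\ge 0})$ and connectivity is preserved by $\otimes$; hence $U(A\Mod)_{\ge 0} \subset \scr C_{\ge 0}$. For left $t$-exactness, i.e. $U(A\Mod)_{\le 0} \subset \scr C_{\le 0}$, I would argue that $M \in (A\Mod)_{\le 0}$ iff $\Map_{A\Mod}(N, M) $ is discrete (connective-co-truncated) for all $N \in F(\scr C_{\ge 1})$, which by adjunction means $\Map_{\scr C}(C, UM)$ is co-truncated for all $C \in \scr C_{\ge 1}$, i.e. $UM \in \scr C_{\le 0}$. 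Uniqueness of the $t$-structure making $U$ $t$-exact then follows since the connective part is forced: any connective object must be built from $F(\scr C_{\ge 0})$-type generators, or more cleanly, a $t$-exact conservative functor determines the $t$-structure on the source because $(A\Mod)_{\ge 0} = U^{-1}(\scr C_{\ge 0})$ for a bounded-below presentable $t$-structure — one checks both inclusions using that $U$ detects connectivity via conservativity plus the adjunction argument above.

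For part (2), assume $\pi_0(\1) \to \pi_0(A)$ is an isomorphism in $\scr C^\heart$. I want to show $U^\heart: A\Mod^\heart \to \scr C^\heart$ is an equivalence. The natural candidate inverse is $\tau_{\le 0}^{A\Mod} \circ F \circ (\text{inclusion})$, or more precisely the functor sending $X \in \scr C^\heart$ to $\pi_0^{A\Mod}(A \otimes X)$. The strategy is: (i) show this lands in $A\Mod^\heart$ (clear, it is a $\pi_0$); (ii) compute $U$ of it, namely $U\pi_0^{A\Mod}(A \otimes X) \simeq \pi_0^{\scr C}(U(A \otimes X)) = \pi_0^{\scr C}(A \otimes X)$ using $t$-exactness of $U$; (iii) identify $\pi_0^{\scr C}(A \otimes X)$ with $X$. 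For (iii), since $X \in \scr C^\heart$ is already connective and $0$-truncated, and $A \in \scr C_{\ge 0}$, the object $A \otimes X$ is connective, and its $\pi_0$ is computed by $\pi_0(A) \otimes_{\pi_0(\1)}$-type formula in the abelian category $\scr C^\heart$ — more carefully, $\pi_0(A \otimes X) \cong \pi_0(A) \otimes^{\heart} \pi_0(X) = \pi_0(A) \otimes^\heart X$ where $\otimes^\heart$ is the right-exact monoidal structure on $\scr C^\heart$ induced from $\scr C_{\ge 0}$ (this is standard: $\pi_0$ is monoidal from $(\scr C_{\ge 0}, \otimes)$ to $(\scr C^\heart, \otimes^\heart)$). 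By hypothesis $\pi_0(A) = \pi_0(\1) = \1^\heart$, the unit of $\otimes^\heart$, so $\pi_0(A \otimes X) \cong X$ naturally. This shows $U^\heart \circ G \simeq \id$ where $G$ is the candidate inverse.

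To finish, I need $G \circ U^\heart \simeq \id$ on $A\Mod^\heart$, i.e. for $M \in A\Mod^\heart$ the natural map $\pi_0^{A\Mod}(A \otimes UM) \to M$ (counit of $F \dashv U$, truncated) is an equivalence. Since $U$ is conservative and $t$-exact, it suffices to check this after applying $U$, where it becomes $\pi_0^{\scr C}(A \otimes UM) \to UM$; but the source is $\pi_0(A) \otimes^\heart UM = UM$ by the computation above, and one checks the map is the identity. Actually a cleaner route: $U^\heart$ is conservative (restriction of a conservative functor to the heart, since $U$ reflects equivalences and the heart inclusion does too) and has a right adjoint given by $X \mapsto \pi_0^{A\Mod}(\text{something})$... but the left-adjoint computation above already exhibits $G$ with $U^\heart G \simeq \id$; combined with conservativity of $U^\heart$ this forces $G U^\heart \simeq \id$ by a standard argument ($U^\heart G U^\heart \simeq U^\heart$ implies $G U^\heart \simeq \id$ since $U^\heart$ is conservative and the comparison map is a natural transformation). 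The main obstacle I anticipate is the careful bookkeeping in (iii) — verifying that $\pi_0$ out of $(\scr C_{\ge 0}, \otimes)$ is monoidal and that the unit hypothesis genuinely trivializes $\otimes^\heart$-multiplication by $\pi_0(A)$ — together with making sure the candidate inverse $G$ is constructed functorially rather than just objectwise; everything else is formal manipulation of adjunctions and accessibility.
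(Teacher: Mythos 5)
Your proposal is correct and follows essentially the same route as the paper: part (1) generates the $t$-structure on $A\Mod$ by applying the free functor $F$ to a generating set of $\scr C_{\ge 0}$, invokes \cite[Proposition 1.4.4.11]{lurie-ha}, checks $t$-exactness of $U$ from $UF(\scr C_{\ge 0}) \subset \scr C_{\ge 0}\otimes\scr C_{\ge 0}\subset\scr C_{\ge 0}$ plus adjunction, and gets uniqueness from conservativity; part (2) reduces via conservativity and $t$-exactness of $U$ to showing the unit $X \to \pi_0(A\otimes X)$ is an equivalence, which you compute exactly as the paper does (your appeal to monoidality of $\pi_0$ on $\scr C_{\ge 0}$ is just a repackaging of the paper's triangle $A_{>0}\to A\to A_{\le 0}$ argument). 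The only nitpick is the left-exactness criterion: for $N \in F(\scr C_{\ge 1})$ you want $\Map(N,M)$ \emph{contractible} (or discrete for $N \in F(\scr C_{\ge 0})$), not discrete as stated, but this is a trivial fix.
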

\begin{proof}
The $t$-structure is unique if it exists, since $U$ is conservative. We show
existence.
Since the $t$-structure on $\scr C$ is accessible, $\scr C_{\ge 0}$ is
presentable, and hence there exists a set of objects $P \subset \scr C_{\ge 0}$
generating $\scr C_{\ge 0}$ under colimits. Denote by $F: \scr C \to A\Mod$ the
left adjoint of $U$ and write $A\Mod_{\ge 0}$ for the full subcategory of
$A\Mod$ generated under colimits and extensions by $FP$. Then $A\Mod_{\ge 0}$
is the non-negative part of an accessible $t$-structure on $A\Mod$
\cite[Proposition 1.4.4.11]{lurie-ha}. It remains to show that $U$ is $t$-exact.
By construction $F$ is right $t$-exact, so $U$ is left $t$-exact. We thus need
to show that $U(A\Mod_{\ge 0}) \subset \scr C_{\ge 0}$. Since $U$ preserves
colimits \cite[Corollary 4.2.3.7]{lurie-ha} and extensions,
for this it is enough to show that $UFP
\subset \scr C_{\ge 0}$. But for $X \in P$ we have $UFX = X \otimes A \in \scr
C_{\ge 0}$ by assumption. This proves (1).

To prove (2), consider the induced adjunction $F^\heart: \scr C^\heart
\leftrightarrows A\Mod^\heart: U^\heart$. Since $U$ is $t$-exact and
conservative, we need only show that for $X \in \scr C^\heart$ the canonical map
$X \to UF^\heart X = (UFX)_{\le 0}$ is an equivalence. We have the triangle
$A_{> 0} \to A \to A_{\le 0}$ giving us $X \otimes A_{> 0} \to X \otimes A = UFX
\to X \otimes A_{\le 0}$. By assumption, $X \otimes A_{> 0} \in \scr C_{> 0}$
and hence $(UFX)_{\le 0} \wequi (X \otimes A_{\le 0})_{\le 0}$. But by
assumption $A_{\le 0} \wequi \1_{\le 0}$, and so reversing the steps with $\1$
in place of $A$ we similarly find that $(X \otimes \1_{\le 0})_{\le 0} \wequi (X \otimes
\1)_{\le 0} \wequi X$. This concludes the proof.
\end{proof}

We recall also the following well-known result.

\begin{lemma} \label{lemm:module-detect}
Let $F: \scr C \to \scr D$ be a symmetric monoidal functor of stable, compact-rigidly
generated, presentably symmetric monoidal $\infty$-categories. Assume that $F$
preserves colimits and has dense image.
Then $F$ has a lax symmetric monoidal right adjoint $U$, so
$U(\1_\scr{D}) \in \CAlg(\scr C)$, and $U$ induces an equivalence $\scr D \wequi
U(\1_\scr{D})\Mod$.
\end{lemma}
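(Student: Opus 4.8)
### Proof plan for Lemma \ref{lemm:module-detect}

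The plan is to assemble the statement from standard adjoint-functor and Barr--Beck technology. First I would produce the right adjoint $U$: since $F$ preserves colimits between presentable $\infty$-categories, the adjoint functor theorem gives a right adjoint $U$, and since $F$ is symmetric monoidal, $U$ is canonically \emph{lax} symmetric monoidal \cite[Corollary 7.3.2.7]{lurie-ha}. In particular $U(\1_\scr{D})$ inherits the structure of a commutative algebra in $\scr C$, and the unit map $\1_\scr{C} \to U(\1_\scr{D})$ is the unit of this algebra. Writing $F^{\mathrm{mod}}\colon A\Mod \to \scr D$ for the base-changed functor (where $A := U(\1_\scr{D})$), the lax monoidal structure on $U$ upgrades $U$ to a functor $\scr D \to A\Mod$ which is right adjoint to $F^{\mathrm{mod}}$; this is the comparison functor I want to show is an equivalence.

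Next I would apply Barr--Beck--Lurie to the adjunction $F^{\mathrm{mod}} \dashv U^{\mathrm{mod}}$. The functor $U^{\mathrm{mod}}\colon \scr D \to A\Mod$ preserves colimits, since $U$ does: indeed $F$ preserves compact objects (it preserves colimits and sends the compact generators of $\scr C$, which are rigid, to rigid hence compact objects of $\scr D$), so $U$ preserves filtered colimits, and being an exact functor of stable categories it preserves all colimits. Hence $U^{\mathrm{mod}}$ preserves geometric realizations and is conservative precisely when $U$ reflects equivalences on the underlying objects; I would deduce conservativity of $U$ from the density hypothesis on $F$ together with rigidity. Concretely: the image of $F$ generates $\scr D$ under colimits, and for a rigid generator $X$ of $\scr C$ with dual $X^\vee$, the object $F(X)$ is rigid in $\scr D$, so $\Map_\scr{D}(F(X), Y) \simeq \Map_\scr{D}(\1, F(X)^\vee \otimes Y) \simeq \Map_\scr{C}(\1, U(F(X)^\vee \otimes Y))$; if $U(Y) \simeq 0$ then (using the projection formula $F(X)^\vee \otimes Y$ again lies in the subcategory on which $U$ vanishes, which is closed under the relevant operations) one forces $\Map_\scr{D}(F(X), Y) \simeq 0$ for all generators $X$, hence $Y \simeq 0$.

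With $U^{\mathrm{mod}}$ shown to be conservative and colimit-preserving, Barr--Beck--Lurie \cite[Theorem 4.7.3.5]{lurie-ha} identifies $\scr D$ with modules over the monad $U^{\mathrm{mod}} F^{\mathrm{mod}}$ on $A\Mod$. It remains to identify this monad with the identity, equivalently to show the unit $\mathrm{id}_{A\Mod} \to U^{\mathrm{mod}} F^{\mathrm{mod}}$ is an equivalence. Since everything in sight preserves colimits and $A\Mod$ is generated under colimits by free modules $A \otimes X$ with $X$ a compact generator of $\scr C$, it suffices to check this on such objects, where $F^{\mathrm{mod}}(A \otimes X) \simeq F(X)$ and the claim reduces to $A \otimes X \xrightarrow{\ \sim\ } U(F(X))$. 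For $X$ rigid this follows from the projection formula: $U(F(X)) \simeq U(F(X) \otimes \1_\scr{D}) \simeq U(F(X)) \otimes \mathbf{1}$... more precisely, using rigidity of $F(X)$ and of $X$ one computes $U(F(X)) \simeq X \otimes U(\1_\scr{D}) = X \otimes A$ via the canonical (lax-monoidal) map, and naturality/rigidity identify this with the unit map.

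The main obstacle I anticipate is the conservativity of $U$ (equivalently of $U^{\mathrm{mod}}$): density of $F$ gives generation under colimits, but one must leverage rigidity carefully to turn ``$U(Y) = 0$'' into ``$Y$ is orthogonal to a generating set,'' and this requires knowing that the full subcategory $\ker U \subset \scr D$ is a tensor ideal — which in turn uses that $U$ is lax monoidal together with the projection formula $F(X) \otimes U(Z) \simeq$ (something hitting $U$ of a dualizable twist of $Z$), valid because $F(X)$ is rigid. Everything else is a formal unwinding of Barr--Beck plus the observation that the relevant functors preserve colimits and the categories are compactly generated by rigid objects, so that checking equivalences reduces to the rigid generators where the projection formula applies verbatim.
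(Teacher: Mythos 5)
Your argument is correct, but it is organized differently from the paper's. The paper's proof is essentially a reduction plus two citations: existence of $U$ by the adjoint functor theorem, the factorization $\scr D \to U(\1_{\scr D})\Mod$ by \cite[Construction 5.23]{mathew2017nilpotence}, then the observation that $U(\1_{\scr D})\Mod$ again satisfies all the hypotheses, so one may assume $\1_{\scr C} \wequi U(\1_{\scr D})$ and conclude by \cite[Lemma 21]{bachmann-hurewicz} (which says precisely that a colimit-preserving symmetric monoidal functor with dense image between compact-rigidly generated categories inducing an equivalence on $\pi_*$ of units, i.e.\ with $\1 \wequi U(\1)$, is an equivalence). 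You instead give a self-contained Barr--Beck--Lurie argument; the ingredients you use — $U$ preserves colimits because $F$ preserves compact objects, conservativity of $U$ from dense image plus the projection formula $U(F(X)^\vee \otimes Y) \wequi X^\vee \otimes U(Y)$ for rigid $X$, and the identification $U(F(X)) \wequi X \otimes U(\1_{\scr D})$ on rigid generators — are exactly the content hidden in the cited lemma, so mathematically the two routes coincide; yours buys self-containedness at the cost of length, the paper's buys brevity by outsourcing. Two small points to keep straight in your write-up: the step ``rigid hence compact in $\scr D$'' uses that $\1_{\scr D}$ is compact, which is part of (or immediate from) the compact-rigid generation hypothesis on $\scr D$, so say so; and the worry at the end about $\ker U$ being a tensor ideal is unnecessary — the projection formula for dualizable objects, which you already invoke, gives $U(F(X)^\vee \otimes Y) \wequi X^\vee \otimes U(Y) \wequi 0$ directly, and since this applies to all shifts of the generators, $Y \wequi 0$ follows without any further ideal-theoretic input. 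Likewise the identification of the unit map $A \otimes X \to U(F(X))$ with the projection-formula equivalence is a routine naturality check for the lax monoidal structure on $U$; it is fine to compress it, but it is the one place where ``naturality/rigidity identify this'' is doing real work.
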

We note that if $F: \scr C \to \scr D$ is a symmetric monoidal functor between
stable, presentably symmetric monoidal $\infty$-categories which preserves colimits and has dense
image, and $\scr C$ is compact-rigidly generated, then $\scr D$ is
compact-rigidly generated as soon as $\1_\scr{D}$ is compact.
\begin{proof}
The existence of $U$ follows from the adjoint functor theorem,
and the factorization $\scr D \to U(\1_\scr{D})\Mod$ is also obtained by abstract
nonsense \cite[Construction 5.23]{mathew2017nilpotence}. Note that
$U(\1_\scr{D})\Mod$ satisfies the same assumptions as $\scr C$. In other words
we may assume that $U\1 \wequi \1$. Now apply \cite[Lemma 21]{bachmann-hurewicz}. %XXX This reference will need updating.
\end{proof}

Recall the category of presheaves with generalized transfers \cite[Section 4]{calmes2014finite}. We
write $\widetilde{\HI}(k)$ for the category of homotopy invariant
Nisnevich sheaves with generalized transfers. Recall also the canonical
equivalence $\widetilde{\HI}(k) \wequi \widetilde{\DM}^\eff(k)^\heart$ from
\cite[Corollary 3.2.11]{Deglise17}. Now we come to our identification result
for the category of effective homotopy modules.

\begin{theorem} \label{thm:HI0-gen-transfer}
Let $k$ be an infinite perfect field, $char(k) \ne 2$. The adjunction
\[ \widetilde M: \SH(k) \leftrightarrows \widetilde{\DM}(k): U \]
induces an equivalence $\SH(k)^\heart \wequi \widetilde{\DM}(k)^\heart$ identifying
the full subcategories $\SH(k)^{\eff\heart}$ and
$\widetilde{\DM}^\eff(k)^\heart$. In particular
\[ \widetilde M^{\eff\heart}: \HI_0(k) \wequi \SH(k)^{\eff\heart}
          \to \widetilde{\DM}^\eff(k)^\heart \wequi \widetilde{\HI}(k) \]
is an equivalence.
\end{theorem}
\begin{proof}
We first prove that $\widetilde M^\heart: \SH(k)^\heart \leftrightarrows
\widetilde{\DM}(k)^\heart: U^\heart$ is an adjoint equivalence. Since $U^\heart$
is conservative, it suffices to show that for $E \in \SH(k)^\heart$ the
canonical map $\alpha: E \to U^\heart M^\heart E$ is an equivalence. Since $2
\ne e$, it suffices
to prove that $\alpha[1/2]$ and $\alpha[1/e]$ are equivalences.

Recall that $U(\1) \in \SH(k)_{\ge 0}$ and $\1 \to U(\1)$ induces $\1_{\le 0}
\wequi U(\1)_{\le 0}$. This follows from the main result in
\cite{calmes2017comparison}, and the cancellation theorem for $\widetilde{\DM}$
\cite{FaselCancellation}. Under our assumptions, $\SH(S)[1/e]$ is
compact-rigidly generated \cite[Corollary B.2]{levine2013algebraic}, and hence
so is $\widetilde{\DM}(k, \Z[1/e])$ (being compactly generated by \cite[Proposition
3.2.21]{Deglise17} and the cancellation theorem). It hence
follows from Lemma \ref{lemm:module-detect} that
$\widetilde{\DM}(k, \Z[1/e])$ is equivalent to the category of modules over
$U(\1)[1/e]$, and hence $\widetilde{\DM}(k)^\heart[1/e] \wequi \SH(k)^\heart[1/e]$
by Lemma \ref{lemm:t-str-mod}. It follows that $\alpha[1/e]$ is an equivalence.

Now consider $\alpha[1/2]$. If $e = 1$ then $\alpha[1/2]$ is an equivalence,
since $\alpha = \alpha[1/e]$ is. Thus we may assume that $e > 1$. In this case
$W(k)[1/2] = 0$, so $\SH(k)[1/2] = \SH(k)^+$ and similarly
$\widetilde{\DM}(k)[1/2] = \widetilde{\DM}(k)^+ \wequi \DM(k)$
\cite[Theorem 5.0.2]{Deglise17}.
The functor $\DM(k)^\heart \to \SH(k)^\heart$ is fully faithful with essential
image the subcategory $\SH(k)^{\heart,\eta=0}$ of those objects on which the
motivic Hopf element $\eta$ acts as zero \cite{deglise-htpy-modules}. Consequently
$M^\heart U^\heart E[1/2]^+ = E[1/2]^+/\eta = E[1/2]^+$, and so $\alpha[1/2] = \alpha[1/2]^+$
is an equivalence. We have thus shown that $\alpha$ is an equivalence.

The adjunction $i: \SH(k)^\eff \leftrightarrows \SH(k): f_0$ induces $i^\heart:
\SH(k)^{\eff\heart} \leftrightarrows \SH(k)^\heart: f_0^\heart$, and $i^\heart$
is fully faithful \cite[Proposition 5(2)]{bachmann-very-effective}. The same
argument applies to $\widetilde{DM}$. The diagram
\begin{equation*}
\begin{CD}
\SH(k)^\eff @>\widetilde M^{\eff}>> \widetilde{\DM}^\eff(k) \\
@ViVV                     @ViVV                  \\
\SH(k)        @>\widetilde M>>      \widetilde{\DM}(k) \\
\end{CD}
\end{equation*}
commutes (by definition), and hence so does the induced diagram
\begin{equation*}
\begin{CD}
\SH(k)^{\eff\heart} @>\widetilde M^{\eff\heart}>> \widetilde{\DM}^\eff(k)^\heart \\
@V{i^\heart}VV                               @V{i^\heart}VV             \\
\SH(k)^\heart        @>\widetilde M^\heart>>      \widetilde{\DM}(k)^\heart. \\
\end{CD}
\end{equation*}
It follows that $\widetilde M^\heart$ maps the full subcategory $\SH(k)^{\eff\heart}$ of
$\SH(k)^\heart$ into the full subcategory $\widetilde{\DM}^\eff(k)^\heart$ of
$\widetilde{\DM}(k)^\heart$. Since $\widetilde{\DM}^\eff(k)^\heart$ is generated
under colimits by the (truncated) motives of varieties and $M^\heart$ is an
equivalence, so preserves subcategories closed under colimits, we conclude that
$\widetilde M^\heart(\SH(k)^{\eff\heart}) = \widetilde{\DM}^\eff(k)^\heart$. In other
words, $M^{\heart,\eff}$ is essentially surjective.
This concludes the proof.
\end{proof}

\begin{remark} \label{rmk:infinite}
Note that we do not claim that the inverse of $\widetilde M^{\eff\heart}:
\SH(k)^{\eff\heart} \to \widetilde{\DM}^\eff(k)^\heart$ is given by
$U^{\eff\heart}$. Indeed I do not not know if
$U(\widetilde{\DM}^\eff(k)) \subset \SH(k)^\eff$. This is true after inverting
the exponential characteristic, by \cite[Corollary 5.1 and Lemma
5.3]{bachmann-criterion}. \thought{Might it be possible to prove directly that
$\Z(X_+ \wedge \Gm)$ is rationally contractible?}
\end{remark}

\begin{remark}
I am confident that this result is true even if $k$ is finite. The most
natural way to prove this would be to extend the results about
$\widetilde{\DM}(k)$ to finite fields. I am confident that this can be done
using the methods of \cite[Appendix B]{EHKSY}, but this would take us too far
afield. In the sequel we will treat finite fields by using an alternative
description of $\HI_0(k)$ in terms of framed transfers \cite[Theorem
5.14]{bachmann-moving}.
\end{remark}

The following corollary is the main reason we need the above result. It allows
us to write down generators for the abelian group $(\E X)(K)$.

\begin{corollary} \label{corr:generation}
Let $k$ be a perfect field of exponential characteristic $e \ne 2$.

Let $K/k$ be a field extension and $X \in \Sm_k$ and assume that $K$ is perfect.
Then $(\E
X)(K)$ is generated as an abelian group by expressions of the form $tr_f g^*
\id_X$, where $\id_X \in (\E X)(X)$ corresponds to the identity morphism, $f:
Spec(L) \to Spec(K)$ is a finite (hence separable) field extension, and $g: Spec(L) \to
X$ is any morphism.
\end{corollary}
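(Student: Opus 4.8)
The plan is to transport the question across the equivalence of Proposition \ref{prop:HI0-gen-transfer} and then analyse finite Milnor--Witt correspondences out of a point. Under $M^{\eff\heart}\colon \HI_0(k)\simeq\widetilde{\HI}(k)$ the object $\E X$ corresponds to $\tau_{\le 0}^{\eff}\widetilde M(X)$, where $\widetilde M(X)=M^{\eff}(\Sigma^\infty_+X)\in\widetilde{\DM}^\eff(k)$ is the Suslin motive; since $\Sigma^\infty_+X$ is connective and $M^{\eff}$ is right $t$-exact, $\widetilde M(X)$ is connective, so $\tau_{\le 0}^{\eff}\widetilde M(X)$ is its zeroth homotopy sheaf, a quotient of the sheaf of finite Milnor--Witt correspondences $\widetilde{\mathrm{Cor}}_k(-,X)$ (the degree-$0$ term of its Suslin complex). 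A surjection of Nisnevich sheaves is surjective on every stalk, and the stalk of a sheaf at the generic point of a model of $K$ is its value on $\operatorname{Spec}K$ (for general perfect $K$, take a further filtered colimit); hence there is a surjection $\widetilde{\mathrm{Cor}}_k(\operatorname{Spec}K,X)\twoheadrightarrow(\E X)(K)$. Under it, $\id_X\in(\E X)(X)$ is the image of the diagonal $[\Delta_X]$, $g^*$ is pullback of correspondences, and $tr_f$ is the Milnor--Witt transfer, i.e.\ composition with the transpose of the graph of $f$ --- this last compatibility being a matter of matching the transfers of \cite{bachmann-real-etale} with the correspondence formalism of \cite{calmes2014finite}. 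Thus $tr_f g^*\id_X$ is the image of $tr_f(g^*[\Delta_X])$, and it suffices to show these elements generate $\widetilde{\mathrm{Cor}}_k(\operatorname{Spec}K,X)$ as an abelian group.

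Next I would make $\widetilde{\mathrm{Cor}}_k(\operatorname{Spec}K,X)$ explicit. We may assume $X$ connected, hence equidimensional of some dimension $d$, so $X_K$ is regular and smooth over $K$ of pure dimension $d$. A finite Milnor--Witt correspondence from the point $\operatorname{Spec}K$ to $X$ is supported on a closed $T\subseteq X_K$ finite over $K$; filtering by support and using the Rost--Schmid complex of $X_K$ (which has length $d$), one identifies $\widetilde{CH}^d_T(X_K,\omega)$ as a quotient of $\bigoplus_{z\in T}GW(\kappa(z);\omega_z)$. Since $K$ is perfect, each residue field $L:=\kappa(z)$ is a finite \emph{separable} field extension of $K$. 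After trivialising the twisting line $\omega_z$ (which only changes things up to units of $GW(L)$), $GW(L;\omega_z)\cong GW(L)$ is free of rank one, and one checks that the image of $tr_{L/K}(g_z^*[\Delta_X])$ --- for $g_z\in X(L)$ the point underlying $z$ --- generates this cyclic $GW(L)$-module. Consequently every element of $\widetilde{\mathrm{Cor}}_k(\operatorname{Spec}K,X)$ is a finite sum of terms $tr_{L/K}(q\cdot g^*\id_X)$ with $L/K$ a finite separable field extension, $g\in X(L)$, and $q\in GW(L)$.

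It remains to eliminate the coefficients $q$, and here $\operatorname{char}(k)\ne 2$ enters. For $a\in L^\times$ not a square, $L(\sqrt a)/L$ is a separable quadratic field extension; setting $g'=g\circ(\operatorname{Spec}L(\sqrt a)\to\operatorname{Spec}L)\in X(L(\sqrt a))$ and $f'\colon\operatorname{Spec}L(\sqrt a)\to\operatorname{Spec}K$ the (finite separable) composite, functoriality of transfers and the projection formula for the $\underline{K}^{MW}$-module $\E X$ give
\[ tr_{f'}(g')^*\id_X = tr_{L/K}\bigl(q_a\cdot g^*\id_X\bigr),\qquad q_a:=\langle 2,2a\rangle\in GW(L) \]
the trace form of $L(\sqrt a)/L$. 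An elementary computation in $GW(L)$, distinguishing whether $2$ is a square in $L$, shows that $GW(L)$ is generated as an abelian group by $\langle 1\rangle$ together with the forms $q_a$, $a\in L^\times\smallsetminus(L^\times)^2$ (e.g.\ differences $q_a-q_{a'}$ produce all $\langle b\rangle-\langle b'\rangle$, and these together with $\langle 1\rangle$ yield every $\langle b\rangle$). Since $tr_{L/K}(-\cdot g^*\id_X)$ is additive, every $tr_{L/K}(q\cdot g^*\id_X)$ lies in the subgroup generated by $tr_{L/K}(g^*\id_X)$ and the $tr_{f'}(g')^*\id_X$. Combined with the previous paragraph, that subgroup is all of $\widetilde{\mathrm{Cor}}_k(\operatorname{Spec}K,X)$; pushing forward along the surjection of the first paragraph finishes the proof.

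The step I expect to be the main obstacle is the compatibility claimed in the first paragraph: that the abstract operations $\id_X$, $g^*$, $tr_f$ on $\HI_0(k)\simeq\widetilde{\HI}(k)$ really are the diagonal, pullback, and transpose-graph composition on finite Milnor--Witt correspondences --- this needs a careful trace through the constructions (or an appeal to the comparison of transfers in the literature). Granting that, the support/Rost--Schmid analysis and the quadratic-forms computation are routine; the twisting-line bookkeeping introduces only $GW$-unit ambiguities, which are irrelevant for a statement about generation as an abelian group, and the hypothesis $\operatorname{char}(k)\ne 2$ is used only to ensure the auxiliary quadratic extensions are separable.
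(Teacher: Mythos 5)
Your argument follows the paper's proof essentially step for step: reduce via Proposition \ref{prop:HI0-gen-transfer} to $\widetilde{\DM}^\eff(k)$, realize $(\E X)(K)$ as a quotient of $\Hom_{\widetilde{Cor}_k}(K,X) \cong \bigoplus_{x \in (X_K)_{(0)}} GW(K(x))$ (up to unit twists), and then combine the projection formula with the fact that $GW$ of a field of characteristic $\ne 2$ is generated as an abelian group by transfers of $1$ along finite separable extensions. The only differences are cosmetic: where you re-derive the correspondence group via the Rost--Schmid complex and prove the $GW$-generation statement by an explicit trace-form computation, the paper simply cites \cite{calmes2014finite} and \cite{bachmann-gwtimes} for these facts (and, like you, takes as known the identification of the abstract transfers with the MW-correspondence transfers).
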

We shall give two proofs: one assuming that $k$ is infinite and relying on
Theorem \ref{thm:HI0-gen-transfer}, and another one that works in general.
\begin{proof}[Proof assuming $k$ infinite.]
Since our categories are additive, we may assume that $X$ is connected.

By Theorem \ref{thm:HI0-gen-transfer},
it is enough to prove the claim for $\ul{h}_0(\widetilde MX)$, where
$\widetilde M: \Sm_k \to \widetilde{\DM}^\eff(k)$ is the canonical functor. By
\cite[Corollary 3.2.14]{Deglise17} we have $\widetilde MX = L_{Nis} Sing_*
a_{Nis} \tilde{c}(X)$. Consequently $\ul{h}_0(\widetilde MX)(K)$ is a
quotient of $\tilde{c}(X)(K) = \Hom_{\widetilde{Cor}_k}(K, X)$. By
\cite[Example 4.5]{calmes2014finite}, up to non-canonical isomorphism we have
\[ \Hom_{\widetilde{Cor}_k}(K, X) \wequi \bigoplus_{x \in (X_K)_{(0)}} GW(K(x)), \]
where $(X_K)_{(0)}$ is the set of closed points.
The class $\alpha \in \Hom{\widetilde{Cor}_k}(K, X)$
corresponding to an element $a \in GW(K(x))$ and $f: Spec(K(x)) \to X$ is
$\alpha = tr_{K(x)/K}(u_x \cdot a \cdot f^*\id_X)$, where $u_x \in GW(K(x))$ is some unit
reflecting the non-canonicity of the above isomorphism.

Since $char(k) \ne 2$,
$GW(K(x))$ is generated as an abelian group
by elements of the form $tr_{L/K(x)}(1)$ with $L/K(x)$ finite separable
\cite[paragraph before Proposition 22]{bachmann-gwtimes}.
It follows from this and the base change formula \cite[Proposition 10]{bachmann-real-etale}
that $\ul{h}_0(\widetilde MX)$ is generated by elements of the form
\[ tr_{K(x)/K}(tr_{L/K(x)}(1) f^* \id_X)
     = tr_{K(x)/K}tr_{L/K(x)} ((Spec(L) \to Spec(K(x)))^* f^* \id_X), \]
as needed. This concludes the proof.
\end{proof}

\begin{proof}[Proof for general $k$.]
We use \cite[Theorem 5.14]{bachmann-moving}, which tells us that $\HI_0(k)$ is
equivalent to the category of homotopy invariant, ``stable'' sheaves with
``equationally framed transfers''. The main upshot for us is that $(\E X)(K)$ is
generated by elements of the form $\alpha^*(\id_X)$, where $\alpha: Spec(K)
\rightsquigarrow X$ is a ``framed correspondence''.
This consists, among other things, of a scheme $Z$ finite over $K$ and a map
$\alpha': Z \to X$. Let $Spec(L) = Z_\red$, and write $g$ for the composite
$Z_\red \hookrightarrow Z \xrightarrow{\alpha'} X$. Then by \cite[Lemma
5.16]{bachmann-moving}, we have $\alpha^*(\id_X) = tr_{L/K}(c_\alpha g^* \id_X)$, where
$c_\alpha \in GW(L)$ is a certain class determined by $\alpha$.

The rest of the proof proceeds as before.
\end{proof}

\begin{remark} \label{rmk:non-perfect-gens}
The only reason above to assume that $K$ is perfect is that then the finite
extension $f: Spec(L) \to Spec(K)$ is automatically étale, and hence we have a
transfer morphism as discussed previously. In fact, as long as $char(k) \ne 2$,
for any finite (but not necessarily separable) field extension, and any homotopy
module $M$, there exist the \emph{cohomological transfer morphism} $tr_f: M(L)
\to M(K)$ \cite[Section 4.3]{A1-alg-top}; it coincides with the previous
transfer if $f$ is étale \cite[Lemma 2.3]{calmes2014finite}.
The above corollary remains true as stated for imperfect $L$, provided that all
finite (not necessarily separable) extensions are considered, and $tr_f$ denotes
the cohomological transfer.
\end{remark}

\section{Normed effective homotopy modules I: construction and basic properties}
\label{sec:normed-homotopy-modules-I}

In this section we construct a final category of motivic Tambara functors, this
time as a category of normed spectra.

The functor \[ \SH^\otimes: \Span(\Sch, \all, \fet) \to \widehat{\Cat}_\infty, X
\mapsto \SH(S) \] has a full subfunctor \[ \SH^{\veff\otimes}: \Span(\Sch, \all,
\fet) \to \widehat{\Cat}_\infty, X \mapsto \SH(X)^\veff. \] Moreover there is a
natural transformation
\[ \tau_{\le 0}^\eff: \SH^{\veff\otimes} \Rightarrow \SH^{\eff\heart},
   \SH(X)^\veff \ni E \mapsto \tau_{\le 0}^\eff(E) \in\SH(X)^{\eff\heart} \]
of functors on $\Span(\Sch, \all, \fet)$.
This is constructed in \cite[before Proposition 13.3]{bachmann-norms}.

\begin{proposition} \label{prop:SH-veff-norms}
Let $f: Y \to X \in \Sch$. The functors $f^*$, $f_\#$ for $f$ smooth and
$f_\otimes$ for $f$ finite étale preserve $\SH(\bullet)^\veff$.
\end{proposition}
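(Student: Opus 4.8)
The plan is to prove the three preservation statements essentially independently, reducing each to the defining property that $\SH(X)^\veff$ is the subcategory generated under colimits and extensions by $\Sigma^\infty_+ \Sm_X$, together with the known fact that $f^*$, $f_\#$, $f_\otimes$ all preserve colimits. Since the subcategory of spectra sent into $\SH(X)^\veff$ by a colimit-preserving functor is closed under colimits, and since each of these functors is exact (hence preserves extensions/cofiber sequences), in each case it suffices to check the statement on the generators $\Sigma^\infty_+ T$ for $T \in \Sm_Y$.

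First I would handle $f^*$: for $T \in \Sm_Y$ we have $f^* \Sigma^\infty_+ T \simeq \Sigma^\infty_+ (T \times_Y X)$, and $T \times_Y X \in \Sm_X$, so this lies in $\SH(X)^\veff$ by definition; done. Next, $f_\#$ for $f$ smooth: here the standard identity is $f_\# \Sigma^\infty_+ T \simeq \Sigma^\infty_+ T$, where on the right $T$ is regarded as a smooth $X$-scheme via $T \to Y \xrightarrow{f} X$ (which is smooth since a composite of smooth morphisms is smooth); again this is manifestly in $\SH(X)^\veff$. The only subtlety worth a sentence is that $f_\#$ is only defined for $f$ smooth, which is exactly the hypothesis, and that it preserves colimits (being a left adjoint) and cofiber sequences (being exact).

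The finite étale norm $f_\otimes$ is the case requiring a little more care, and is the main obstacle, because $f_\otimes$ is \emph{not} additive and does not preserve colimits on the nose --- it only preserves sifted colimits and takes finite coproducts to finite products (it is ``distributive'' rather than left adjoint). So one cannot directly argue as above. The standard workaround, following \cite[Section 5 and Lemma 16.3-ish]{bachmann-norms}, is: the class of objects $E \in \SH(Y)$ with $f_\otimes E \in \SH(X)^\veff$ is closed under sifted colimits (since $f_\otimes$ preserves them and $\SH(X)^\veff$ is closed under all colimits), and one checks separately closure under the finite coproducts and extensions needed to generate $\SH(Y)^\veff$ from $\Sigma^\infty_+\Sm_Y$ using the ``distributivity'' formula for $f_\otimes$ of a finite coproduct / of a cofiber sequence in terms of norms of smaller pieces (expressed via pushforwards along the factors of $f$, which after base change are again finite étale and smooth, so land in $\SH(X)^\veff$ by the already-established $f_\#$ and $f^*$ cases). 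On generators one uses the computation $f_\otimes \Sigma^\infty_+ T \simeq \Sigma^\infty_+ R_f(T)$ for $T \in \Sm_Y$ where $R_f$ is the Weil restriction along the finite étale map $f$ --- and $R_f(T)$ is smooth over $X$ (Weil restriction along finite étale morphisms preserves smoothness, as recalled in the Background section), hence $\Sigma^\infty_+ R_f(T) \in \SH(X)^\veff$. Assembling these closure properties via the generation statement for $\SH(Y)^\veff$ finishes the proof; I would cite \cite[Section 5]{bachmann-norms} for the precise form of the distributivity/norm formulas rather than reprove them here.
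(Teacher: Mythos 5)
Your argument is correct in outline, but it is organized differently from the paper's, and the one genuinely hard point is deferred to a vague citation. The paper disposes of $f^*$ and $f_\otimes$ in a single line, by noting that these cases are already implicit in the existence of the subfunctor $\SH^{\veff\otimes}$ on $\Span(\Sch, \all, \fet)$ constructed in \cite[before Proposition 13.3]{bachmann-norms}; the only case it argues directly is $f_\#$, and there it uses exactly your argument ($f_\#$ preserves colimits, hence extensions, and sends the generators $\Sigma^\infty_+ T$ to suspension spectra of smooth $X$-schemes). Your $f^*$ argument is the same kind of reduction to generators and is fine, apart from a direction slip: with $f\colon Y \to X$ the functor is $f^*\colon \SH(X) \to \SH(Y)$, so the generators to check are $\Sigma^\infty_+ T$ with $T \in \Sm_X$, pulled back to $T \times_X Y \in \Sm_Y$ (you wrote the roles of $X$ and $Y$ the other way around).

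For $f_\otimes$ you are in effect re-proving the result from \cite{bachmann-norms} rather than citing it. The skeleton is right: $f_\otimes$ is not exact and only preserves sifted colimits; on generators $f_\otimes \Sigma^\infty_+ T \wequi \Sigma^\infty_+ R_f T$ (note this requires $T$ quasi-projective so that $R_f T$ exists --- harmless, since $\Sigma^\infty_+ \SmQP_Y$ also generates $\SH(Y)^\veff$, e.g. by Zariski descent); and finite coproducts are handled by distributivity, the resulting terms being $g_\#$ of norms of smaller degree (which strictly speaking needs an induction on the degree of $f$, not just the already-proved $f_\#$ and $f^*$ cases). The soft spot is closure under extensions: you assert a ``distributivity formula for $f_\otimes$ of a cofiber sequence'' and wave at \cite{bachmann-norms} for it, but producing such a filtration is precisely the nontrivial content of the statement being proved, so as a self-contained argument this step is a gap. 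If one is going to cite \cite{bachmann-norms} at that level anyway, the clean thing to cite is the statement that norms preserve $\SH(\bullet)^\veff$ itself (equivalently, the existence of $\SH^{\veff\otimes}$), which is exactly what the paper does.
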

\begin{proof}
The claims about $f^*$ and $f_\otimes$ are already implicit in the existence of
$\SH^{\veff\otimes}$. Since $f_\#$ preserves colimits (and hence extensions), it
is enough to show that $f_\# \Sigma^\infty_+ U \in \SH(X)^\veff$ for $U \in
\Sm_Y$, which is clear.
\end{proof}

\begin{proposition} \label{prop:nalg-nullification}
Let $f: Y \to X \in \Sch$. If $f$ is smooth, the functor $f^*:
\SH(Y)^{\eff\heart} \to \SH(X)^{\eff\heart}$ has a left adjoint still denoted
$f_\#$.

The functors $\tau_{\le 0}^\eff: \SH(\bullet)^\veff \to
\SH(\bullet)^{\eff\heart}$ commute with $f^*$, with $f_\#$ for $f$ smooth, and
with $f_\otimes$ for $f$ finite étale.
\end{proposition}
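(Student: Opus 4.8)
The plan is to derive both assertions from how $f^*$ and $f_\#$ interact with the effective homotopy $t$-structures, together with the fact --- recalled just before Proposition~\ref{prop:SH-veff-norms} --- that $\tau_{\le 0}^\eff$ is a natural transformation of functors $\SH^{\veff\otimes}\Rightarrow\SH^{\eff\heart}$ on $\Span(\Sch,\all,\fet)$. In particular the compatibility of $\tau_{\le 0}^\eff$ with $f^*$ (naturality along a backward arrow) and with $f_\otimes$ for $f$ finite étale (naturality along a forward arrow) is already part of that construction, so the only genuinely new content is the existence of the left adjoint $f_\#$ on hearts and its compatibility with $\tau_{\le 0}^\eff$.

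First I would record the relevant exactness statements for $f$ smooth. By Proposition~\ref{prop:SH-veff-norms}, both $f^*$ and $f_\#$ restrict to the very effective subcategories. On these, $f^*$ preserves colimits, extensions and the generators $\Sigma^\infty_+U$ ($U\in\Sm_X$), hence $f^*(\SH(X)^\eff_{\ge 0})\subseteq\SH(Y)^\eff_{\ge 0}$ and $f^*$ is right $t$-exact; similarly $f_\#$ preserves colimits and extensions and sends $\Sigma^\infty_+U$, $U\in\Sm_Y$, to $\Sigma^\infty_+U$ (now smooth over $X$), so using $\SH(\bullet)^\eff_{\ge n}=\Sigma^n\SH(\bullet)^\veff$ and exactness of $f_\#$ one obtains $f_\#(\SH(Y)^\eff_{\ge n})\subseteq\SH(X)^\eff_{\ge n}$ for every $n\ge 0$; in particular $f_\#$ is right $t$-exact. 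Since the adjunction $f_\#\dashv f^*$ restricts to the full subcategories $\SH(\bullet)^\veff$, the functor $f^*$ is the right adjoint of a right $t$-exact functor, hence left $t$-exact as well, hence $t$-exact.

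Next I would build the left adjoint on hearts. Since $f^*$ is $t$-exact it restricts to a functor $f^{*\heart}$ on hearts which commutes with $\tau_{\le 0}^\eff$. Writing $\iota$ for the inclusions $\SH(\bullet)^{\eff\heart}\hookrightarrow\SH(\bullet)^\veff$, set $\bar f_\#:=\tau_{\le 0}^\eff\circ f_\#\circ\iota$. For $A\in\SH(Y)^{\eff\heart}$ and $B\in\SH(X)^{\eff\heart}$, the chain of natural equivalences (all mapping spaces in the evident categories)
\[
\Map(\tau_{\le 0}^\eff f_\#\iota A,\,B)\simeq\Map(f_\#\iota A,\,\iota B)\simeq\Map(\iota A,\,f^*\iota B)\simeq\Map(\iota A,\,\iota f^{*\heart}B)\simeq\Map(A,\,f^{*\heart}B)
\]
uses, in order, $\tau_{\le 0}^\eff\dashv\iota$ (legitimate since $f_\#\iota A$ is connective), $f_\#\dashv f^*$, $t$-exactness of $f^*$, and full faithfulness of $\iota$; it exhibits $\bar f_\#$ as a left adjoint of $f^{*\heart}$. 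This $\bar f_\#$ is the functor denoted $f_\#$ in the statement.

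Finally, for the compatibility of $\tau_{\le 0}^\eff$ with $f_\#$: given $E\in\SH(Y)^\veff$, apply the exact functor $f_\#$ to the truncation triangle $\tau_{\ge 1}^\eff E\to E\to\tau_{\le 0}^\eff E$, all of whose terms lie in $\SH(Y)^\veff$. By the connectivity estimate above, $f_\#\tau_{\ge 1}^\eff E\in\SH(X)^\eff_{\ge 1}$, so its effective homotopy objects vanish in degrees $\le 0$; the associated long exact sequence then gives $\tau_{\le 0}^\eff(f_\#E)\simeq\tau_{\le 0}^\eff(f_\#\tau_{\le 0}^\eff E)=\bar f_\#(\tau_{\le 0}^\eff E)$, which is the asserted compatibility. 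I do not expect a real obstacle here: the one point requiring care is that $\tau_{\le 0}^\eff$ is merely right exact, so --- unlike the compatibility with $f^*$ (formal from $t$-exactness) and with $f_\otimes$ (part of the cited construction) --- the compatibility with $f_\#$ must proceed through the triangle argument above, and one should not forget to invoke Proposition~\ref{prop:SH-veff-norms} in order to know that the adjunction $f_\#\dashv f^*$ and the colimit/extension closure properties genuinely restrict to the very effective subcategories.
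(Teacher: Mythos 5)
Your proof is correct and takes essentially the same route as the paper: compatibility of $\tau_{\le 0}^\eff$ with $f^*$ and $f_\otimes$ is read off from the natural transformation on $\Span(\Sch,\all,\fet)$, the key input is that $f_\#$ preserves $\SH(\bullet)^\eff_{\ge 1}$ (whence $f^*$ preserves hearts by adjunction), and the left adjoint on hearts is the composite $\tau_{\le 0}^\eff\circ f_\#\circ\iota$. You simply spell out what the paper calls ``easy to check directly'' — the adjunction chain and the truncation-triangle argument showing $\tau_{\le 0}^\eff f_\# \simeq f_\#\tau_{\le 0}^\eff$ — so there is nothing to correct (only note that your labelling of $X$ and $Y$ is the mirror image of the paper's).
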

\begin{proof}
The statements about $f^*$ and $f_\otimes$ are already implicit in the existence
of the natural transformation $\tau_{\le 0}^\eff$ of functors on $\Span(\Sch,
\all, \fet)$.

The functor $f_\#: \SH(X)^\veff \to \SH(Y)^\veff$ preserves the subcategory
$\SH(\bullet)^\eff_{\ge 1}$. By adjunction it follows that $f^*: \SH(Y)^\veff
\to \SH(X)^\veff$ preserves $\SH(\bullet)^{\eff\heart}$. From this it is easy to
check directly that the composite \[ \SH(Y)^{\eff\heart} \to \SH(Y)^\veff
\xrightarrow{f_\#} \SH(X)^\veff \xrightarrow{\tau_{\le 0}^\eff}
\SH(X)^{\eff\heart} \] is left adjoint to $f^*: \SH(Y)^{\eff\heart} \to
\SH(X)^{\eff\heart}$.
\end{proof}

\begin{definition}
Let $\scr C \subset_\fet \Sch_S$. We denote the full subcategory of
$\NAlg_\scr{C}(\SH)$ consisting of those normed spectra with underlying spectrum
in $\SH(S)^{\eff\heart}$ by $\NAlg_\scr{C}(\SH(S)^{\eff\heart})$ and call
it the category of $\scr C$-normed effective homotopy modules. If $S = Spec(k)$
is the spectrum of a field, so $\SH(S)^{\eff\heart} \wequi \HI_0(k)$, then we
also denote $\NAlg_\scr{C}(\SH(S)^{\eff\heart})$ by $\NAlg_\scr{C}(\HI_0(k))$.
\end{definition}

\begin{lemma} \label{lemm:NAlg-HI-adjoint}
The functor $U: \NAlg_\scr{C}(\HI_0(S)) \to \HI_0(S)$
preserves limits and sifted colimits.

If $\scr C = \SmQP_S$, $U$ has a left adjoint $F$ which satisfies
\[ UF E \wequi \colim_{\substack{f\colon X\to S\\p\colon Y\to X}} \tau_{\le
0}^\eff f_\# p_\otimes(E_Y). \]
Here the colimit is over the source of the cartesian fibration classified by
$\SmQP_S^\op \to \scr S$, $X\mapsto\FEt_X^\simeq$.

Moreover for $(f: X \to S, p:
Y \to X)$ in the indexing category, the canonical map $\tau_{\le 0}^\eff f_\# p_\otimes(E_Y) \to UF
E$ is induced by the composite
\[ f_\# p_\otimes E_Y \to f_\# p_\otimes (UFE)_Y \to f_\# (UFE)_X \to UF E, \]
where the first map is induced by the unit map $E \to UF E $, the second map is
induced by the multiplication $p_\otimes (FE)_Y \to (FE)_X$, and the third map
is a co-unit map.
\end{lemma}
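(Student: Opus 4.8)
The plan is to reduce everything to the general theory of normed spectra from \cite{bachmann-norms}, applied to the span-functor $\SH^{\eff\heart}$ rather than to $\SH^\otimes$. Propositions \ref{prop:SH-veff-norms} and \ref{prop:nalg-nullification} supply precisely the structure this requires: each $\SH(X)^{\eff\heart}$ is presentable, the backward restriction functors admit left adjoints $p_\#$ (for $p$ smooth), and $\tau_{\le 0}^\eff$ is compatible with $p^*$, $p_\#$ and $p_\otimes$. Granting that the monadicity argument of \cite[Proposition 7.6]{bachmann-norms} transports verbatim, $U$ is monadic; it therefore creates limits, and creates those colimits that its monad $T$ preserves. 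Since $\HI_0(S)$ has all limits and colimits, the first assertion reduces to showing that $T$ preserves sifted colimits. For this one uses that the underlying $\SH$-level monad is assembled from $f_\#$, $p_\otimes$ and $p^*$, which preserve sifted colimits and also — by the same two Propositions — preserve the subcategories $\SH(\bullet)^\eff_{\ge 1}$; truncating this monad back into the heart then preserves filtered colimits and reflexive coequalizers, hence all sifted colimits.

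For the formula, fix $\scr C = \SmQP_S$. First I would note that the free $\SH$-normed spectrum functor $F'\colon \SH(S) \to \NAlg_{\SmQP}(\SH(S))$, whose underlying object is $\colim_{f,p} f_\# p_\otimes(-)_Y$ by \cite[Remark 16.25]{bachmann-norms}, carries $\SH(S)^\veff$ into the normed spectra that are levelwise very effective (Proposition \ref{prop:SH-veff-norms}), the restriction remaining left adjoint to $U$. Next, the natural transformation $\tau_{\le 0}^\eff$ induces a functor $\NAlg_{\SmQP}(\SH(S)^\veff) \to \NAlg_{\SmQP}(\HI_0(S))$ (it respects the cocartesian condition over backward arrows since it commutes with $p^*$), which I claim is a Bousfield localization: its fully faithful right adjoint $R$ sends a normed effective homotopy module $A$ to the very effective normed spectrum with the same underlying object $UA \in \SH(S)^{\eff\heart} \subset \SH(S)^\veff$ and structure maps adjoint to those of $A$, so that $UR$ is the inclusion $\SH(S)^{\eff\heart} \hookrightarrow \SH(S)^\veff$ and $\tau_{\le 0}^\eff R \wequi \id$. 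Setting $F := \tau_{\le 0}^\eff \circ F'$, the adjunction $F \dashv U$ follows from the chain of natural equivalences, for $A \in \NAlg_{\SmQP}(\HI_0(S))$ and $E \in \HI_0(S)$,
\[ \Map(\tau_{\le 0}^\eff F' E, A) \wequi \Map(F' E, RA) \wequi \Map(E, URA) \wequi \Map(E, UA), \]
where the steps use $\tau_{\le 0}^\eff \dashv R$, then $F' \dashv U$, then $URA \wequi UA$ together with the fact that $E$ and $UA$ both lie in $\SH(S)^{\eff\heart}$.

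Applying $U$ to $FE = \tau_{\le 0}^\eff F'E$, and using $U\tau_{\le 0}^\eff \wequi \tau_{\le 0}^\eff U$ and that $\tau_{\le 0}^\eff$ preserves colimits, yields
\[ UFE \wequi \tau_{\le 0}^\eff\, UF'E \wequi \tau_{\le 0}^\eff \colim_{f,p} f_\# p_\otimes E_Y \wequi \colim_{f,p} \tau_{\le 0}^\eff f_\# p_\otimes E_Y, \]
which is the claimed formula (note $E_Y \in \SH(Y)^{\eff\heart}$ since $Y \to S$ is smooth). The transition maps are then obtained by applying $\tau_{\le 0}^\eff$ to the description in \cite[Remark 16.25]{bachmann-norms} of the maps $f_\# p_\otimes E_Y \to UF'E$ and rewriting with Proposition \ref{prop:nalg-nullification} (that $\tau_{\le 0}^\eff$ commutes with $f_\#$, $p_\otimes$, $p^*$) and naturality of the unit $E \to UFE$. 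The main obstacle, as I see it, is the two bookkeeping claims that legitimise all of this — that the monadicity and free-algebra formula of \cite{bachmann-norms} transport from $\SH^\otimes$ to $\SH^{\eff\heart}$, and that $\tau_{\le 0}^\eff$ induces a Bousfield localization on normed spectra with the stated right adjoint — which are exactly the reason Propositions \ref{prop:SH-veff-norms} and \ref{prop:nalg-nullification} were established; granting them, the rest is formal.
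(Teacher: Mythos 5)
Your treatment of the main part (the left adjoint for $\scr C = \SmQP_S$ and the colimit formula) is essentially the paper's argument: restrict the free normed spectrum functor of \cite[Remark 16.25]{bachmann-norms} to $\SH(S)^\veff$ using Proposition \ref{prop:SH-veff-norms}, localize into the heart levelwise by $\tau_{\le 0}^\eff$, compose the two left adjoints, and commute $\tau_{\le 0}^\eff$ past the colimit and past $f_\# p_\otimes$ via Proposition \ref{prop:nalg-nullification}. The one place where you diverge is that you try to \emph{construct} the localization $\NAlg(\SH(S)^\veff) \rightleftarrows \NAlg(\SH(S)^{\eff\heart})$ by hand, claiming that the levelwise truncation has a fully faithful right adjoint whose structure maps are ``adjoint to those of $A$''. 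That claim is true, but it is not a consequence of Propositions \ref{prop:SH-veff-norms} and \ref{prop:nalg-nullification} as you assert: those only give compatibility of $\tau_{\le 0}^\eff$ with $f^*, f_\#, p_\otimes$ objectwise, whereas the point at issue is that the localization assembles coherently at the level of sections of the span functor. This is exactly \cite[Proposition 13.3]{bachmann-norms}, which the paper cites for precisely this step; without that citation (or reproducing its argument), your ``bookkeeping claim'' is a genuine gap rather than a formality.

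For the first claim (preservation of limits and sifted colimits), your route through monadicity is problematic. Monadicity of $U$ presupposes a left adjoint; in the paper this adjoint is only produced (and only for $\scr C = \SmQP_S$) in the second half of the lemma, and monadicity is deduced \emph{from} the lemma in the remark following it. Moreover your argument that the monad preserves sifted colimits invokes the colimit formula (``assembled from $f_\#$, $p_\otimes$, $p^*$''), i.e.\ the very statement being proved, and in any case does not cover general $\scr C$, for which the first claim is asserted. The paper avoids all of this by citing \cite[Remark 7.7]{bachmann-norms}: limits and sifted colimits in the section category are computed on underlying objects because the structure functors preserve sifted colimits, and Proposition \ref{prop:nalg-nullification} transports this to the heart level. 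Replacing your monadicity detour with that direct argument (or citation) would close the gap.
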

\begin{proof}
The claim about limits and colimits follows from \cite[Remark
7.7]{bachmann-norms}, using Proposition \ref{prop:nalg-nullification}.

The functor $\NAlg_{\SmQP_S}(\SH) \to \SH(S)$ has a left adjoint $\bar{F}$ given by the same
colimit as in the claim, but without the $\tau_{\le 0}^\eff$ \cite[Remark
16.25]{bachmann-norms}. If $E \in
\SH(S)^\veff$ then $U\bar{F} E \in \SH(S)^\veff$, since the latter category is
closed under colimits and $f_\# p_\otimes g^*$ by Proposition
\ref{prop:SH-veff-norms}. Hence $\bar{F}$ restricts to a functor $\SH(S)^\veff \to
\NAlg(\SH(S)^\veff)$ left adjoint to $\NAlg(\SH(S)^\veff) \to
\SH(S)^\veff$. By \cite[Proposition 13.3]{bachmann-norms}, the inclusion
$\NAlg(\SH(S)^{\eff\heart}) \to \NAlg(\SH^\veff(S))$ has a left adjoint pointwise given by
$\tau_{\le 0}^\eff$. Since $\tau_{\le 0}^\eff$ preserves colimits and commutes
with $f_\#p_\otimes g^*$ by Proposition \ref{prop:nalg-nullification}, the
result follows.
\end{proof}

\begin{remark} \label{rmk:1-categorical}
Under the conditions of Lemma \ref{lemm:NAlg-HI-adjoint}, the category
$\NAlg_\scr{C}(\SH(S)^{\eff\heart})$ is monadic over the 1-category
$\SH(S)^{\eff\heart}$, and hence a 1-category.
\end{remark}

\section{Normed effective homotopy modules II: main theorem}
\label{sec:normed-homotopy-modules-II}

In this section we put everything together: we show that
$\NAlg_\scr{C}(\HI_0(k))$ is
equivalent $T^2_\scr{C}(k)$, for an appropriate $\scr C$.
In fact from now on, we set $\scr C = \SmQP_k$,
$\scr V = \SmQP$ and suppress both from the
notation. In particular we have $T^2(k) \wequi T^1(k)$, by Proposition \ref{prop:comparison}.

\begin{construction}
If $E \in \NAlg(\HI_0(k))$, then $E$ naturally defines an object of $T^2(k)$
\cite[Proposition 7.19(1), Lemma 7.20]{bachmann-norms}.
We denote the resulting functor by $\rho: \NAlg(\HI_0(k)) \to
T^2(k)$.
\end{construction}

Note that the following diagram commutes
\begin{equation*}
\begin{CD}
\NAlg(\HI_0(k)) @>{\rho}>> T^2(k) \\
@VUVV                       @VU_2VV \\
\HI_0(k) @= \HI_0(k).
\end{CD}
\end{equation*}
From now on, let $e$ be the exponential characteristic
of $k$. Write $\NAlg(\HI_0(k))[1/e]$ for the full subcategory on those $M \in \NAlg(\HI_0(k))$
such
that $UM \in \HI_0(k)[1/e]$. Define $T^2(k)[1/e]$ similarly. Then the inclusion
$\NAlg(\HI_0(k))[1/e] \to \NAlg(\HI_0(k))$ has a left adjoint $M \mapsto M[1/e]$
such that $U(M[1/e]) = U(M)[1/e]$ \cite[Proposition 12.6]{bachmann-norms}.
Similarly for the inclusion $T^2(k)[1/e] \to T^2(k)$, by Proposition
\ref{prop:comparison} and Corollary \ref{corr:T1-loc}.
\begin{theorem} \label{thm:main}
Let $k$ be a perfect field of exponential characteristic $e \ne 2$.
Let $X \in \SmQP_k$. The canonical map $\E X[1/e] \to UF \E X[1/e] \wequi U_2
\rho F \E X[1/e]$
exhibits $\rho F \E X[1/e]$ as the free $e$-local (i.e. in $T^2(k)[1/e]$)
motivic Tambara functor of the second kind on $\E
X[1/e]$. In other words, for $A \in T^2(k)[1/e]$, the canonical map
\[ u: [\rho F \E X[1/e], A]_{T^2(k)} \to [U_2 \rho F \E X[1/e][1/e], U_2 A]_{\HI_0(k)} \to [\E X[1/e], U_2 A]_{\HI_0(k)} \]
is an isomorphism.
\end{theorem}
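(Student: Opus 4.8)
Here is how I would approach the proof.

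The plan is to make $F\E X[1/e]$ completely explicit and verify the universal property by hand. First I would take the colimit formula for $UF$ from (the proof of) Lemma~\ref{lemm:NAlg-HI-adjoint}, applied to $\Sigma^\infty_+ X$ and then truncated, and rewrite its $(T,W)$-term — with $f\colon T\to k$ in $\SmQP_k$, $p\colon W\to T$ finite étale, $g = fp$ — using that a finite étale norm of a smooth $\#$-pushforward is the $\#$-pushforward along the corresponding Weil restriction \cite{bachmann-norms}: this identifies $\tau_{\le 0}^\eff f_\# p_\otimes g^*\Sigma^\infty_+ X[1/e]$ with $\E{R_p(W\times_k X)}[1/e]$, where $W\times_k X$ is regarded as a $W$-scheme via the first projection, so that
\[ U_2\rho F\E X[1/e] = UF\E X[1/e] \wequi \colim_{(T,W)} \E{R_p(W\times_k X)}[1/e]. \]
Write $\iota\in(UF\E X)(X)$ for the image of the identity class under the unit, $c_{T,W}$ for the cocone map of the $(T,W)$-term, $\mathrm{pr}_2\colon W\times_k X\to X$ for the second projection, and $N$ for the norm operations of the motivic Tambara functor of the second kind $\rho F\E X[1/e]$. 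Tracing the ``Moreover'' clause of Lemma~\ref{lemm:NAlg-HI-adjoint} through this identification, and using the description of the norm on $\pi_0$ of a normed spectrum \cite[Corollary~7.21]{bachmann-norms}, I would show $c_{T,W}(\id) = N_{p,\,W\times_k X}(\mathrm{pr}_2^*\iota)$. Since $U_2$ is faithful, a morphism $\phi\colon\rho F\E X[1/e]\to A$ in $T^2(k)$ is then the same as a family $\beta_{T,W}\in A(R_p(W\times_k X))$ compatible along the cocone (this being the underlying morphism of homotopy modules) and compatible with all norm operations; and $u(\phi) = \beta_{Spec(k),Spec(k)}\in A(R_{\id}X)=A(X)$.

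Injectivity of $u$ is then formal: a morphism in $T^2(k)$ commutes with every norm $N_{f,V}$ (Lemma~\ref{lemm:extra-norms}), so $\beta_{T,W} = \phi(c_{T,W}(\id)) = \phi(N_{p,W\times_k X}(\mathrm{pr}_2^*\iota)) = N^A_{p,W\times_k X}(\mathrm{pr}_2^*\alpha)$ with $\alpha := u(\phi)$; thus $\phi$ is determined by $\alpha$. For surjectivity, given $\alpha\in A(X) = [\E X[1/e], U_2A]_{\HI_0(k)}$ I would define $\beta_{T,W} := N^A_{p,W\times_k X}(\mathrm{pr}_2^*\alpha)$. That $\{\beta_{T,W}\}$ is compatible along the cocone follows from the base change law for norms (Definition~\ref{defn:tambara-2}(2)): a transition morphism $(T_0,W_0)\to(T,W)$ is a map $T_0\to T$ with $W_0 = W\times_T T_0$, and since Weil restriction commutes with base change the comparison $R_{p_0}(W_0\times_k X) = R_p(W\times_k X)\times_T T_0 \to R_p(W\times_k X)$ sits in exactly the cartesian configuration to which that axiom applies, producing precisely the required identity. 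This yields $\psi\colon UF\E X[1/e]\to U_2 A$ in $\HI_0(k)$ with $\psi\circ(\text{unit}) = \alpha$ (at the $(Spec(k),Spec(k))$ term $\mathrm{pr}_2$ and $R_{\id}$ are identities and $N^A_{\id,X}=\id$).

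The substance of the proof is to show that $\psi$ underlies a morphism in $T^2(k)$, i.e.\ commutes with the norms; by Lemma~\ref{lemm:extra-norms} it suffices to treat the basic norms $N^{(1)}_g$ for $g\colon Z\to Z'$ finite étale in $\SmQP_k$. Since $N^{(1)}_g$ and the square in question are assembled from maps of homotopy modules, unramifiedness (as in Proposition~\ref{prop:T1-sl-equiv}) together with the base change law reduces us, after inverting $e$ and replacing all fields by their perfect closures (using \cite[Lemma~17]{bachmann-hurewicz}, exactly as in Lemma~\ref{lemm:T1-gen-full}), to the case $Z' = Spec(K)$ with $K$ a perfect field and $Z = \coprod_i Spec(L_i)$, $L_i/K$ finite separable. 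As $Spec(L)$ is semilocal, Lemma~\ref{lemm:evX-sifted-colim} gives $(UF\E X[1/e])(Spec(L)) \wequi \colim_{(T,W)} (\E{R_p(W\times_k X)}[1/e])(Spec(L))$ as a colimit of abelian groups, and Corollary~\ref{corr:generation} applied to each $R_p(W\times_k X)$ shows that every element of $(UF\E X[1/e])(Spec(L))$ is a finite sum of terms $\tr_h h^* c_{T,W}(\id) = \tr_h h^* N_{p,W\times_k X}(\mathrm{pr}_2^*\iota)$, with $h$ a composite of a finite separable extension and a morphism to the Weil restriction. Applying $N^{(1)}_g$ to such a sum and expanding — by iterated use of the distributivity law, the base change law and the composition law, all valid in $\rho F\E X[1/e]$ — one obtains a fixed ``universal'' combination of transfers, pullbacks and norms of the classes $\iota$; applying $\psi$, which commutes with transfers and pullbacks and by construction sends each $N_{p,V}(\mathrm{pr}_2^*\iota)$ to $N^A_{p,V}(\mathrm{pr}_2^*\alpha)$, turns this into the same universal combination built from the classes $\alpha$ in $A$; and running the identical chain of Tambara identities in $A$ backwards identifies it with $N^A_g(\psi(-))$. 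Hence $\psi N^{(1)}_g = N^A_g\psi$, giving $\phi$ with $u(\phi) = \alpha$; combined with injectivity, $u$ is a bijection. I expect the last step to be the main obstacle: both the verification that $c_{T,W}(\id) = N_{p,W\times_k X}(\mathrm{pr}_2^*\iota)$ — unwinding the multiplication and counit maps through the norm/$\#$/Weil-restriction identifications — and the bookkeeping needed to match the two ``universal'' expansions of $N^{(1)}_g$ on generators are lengthy, even if essentially elementary.
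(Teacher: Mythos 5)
Your overall route is the same as the paper's: the colimit formula from Lemma \ref{lemm:NAlg-HI-adjoint}, the identity $c_{T,W}(\id)=N_{p,W\times_k X}(\mathrm{pr}_2^*\iota)$ (the paper's \eqref{eq:ap-a1}), injectivity from it, surjectivity by defining the compatible family via norms and base change, reduction to finite separable extensions of perfect fields via Lemma \ref{lemm:T1-gen-full}, and generation by transfers of pullbacks via Corollary \ref{corr:generation}. The gap is in your last step, where you pass from generators to arbitrary elements. Corollary \ref{corr:generation} gives generation \emph{as a $\Z[1/e]$-module}: a general element of $(UF\E X[1/e])(L)$ is a $\Z[1/e]$-linear combination of terms $tr_h h^* N_{p,V}(\mathrm{pr}_2^*\iota)$, with negative coefficients and denominators $e^k$ allowed. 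Your ``universal expansion'' of $N^{(1)}_g$ by iterated distributivity only handles norms of transfers, equivalently sums with non-negative integer coefficients (a sum of transfers is a transfer from a disjoint union); it says nothing about $N_g(x-y)$ or $N_g(x/e^k)$, since norms are not additive and distributivity does not expand differences or fractions. So knowing that $\psi$ intertwines norms on the sub-semiring of ``good'' elements does not formally give the same on all of $(UF\E X[1/e])(L)$, and this is exactly the point your argument elides.

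The paper closes this gap with the naive Tambara functor machinery of Section \ref{sec:tambara-naive}: the good elements form a subfunctor $(F\E X)_0\subset F\E X|_{\FEt_K}$ which is a naive (semiring-level) Tambara functor, its group completion localized at $e$ is all of $F\E X|_{\FEt_K}$, and Tambara's Theorem \ref{thm:tambara-gc} together with Corollary \ref{corr:T3-loc} (whose proof needs Lemma \ref{lemm:norms-units}, that $N_f(e)$ becomes a unit) shows the norm-compatible extension of $(F\E X)_0\to A|_{\FEt_K}$ to $(F\E X)_0^+[1/e]$ exists and is unique among additive $\Z[1/e]$-linear extensions; since $\alpha$ is such an extension, it must be that one, hence commutes with norms. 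If you want to avoid quoting Tambara you would in effect have to reprove his group-completion theorem (and the invertibility of $N_f(e)$) by hand; as written, your ``run the identical chain of identities backwards in $A$'' silently assumes it. The rest of your verification on generators is essentially the paper's ``good sections'' induction (closure under pullback, norm, and---via distributivity---transfer, plus the base case $b^*a_1$, which needs the small exchange identity \eqref{eq:technical-N-trick}), so once you insert the group-completion/localization step your proof matches the paper's.
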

\begin{proof}

\textbf{Preliminary remarks.}
By Lemma \ref{lemm:NAlg-HI-adjoint} we have
\[ UF \E X = \colim_{\substack{f\colon S\to Spec(k)\\p\colon Y\to S}}
                          \tau_{\le 0}^\eff f_\# p_\otimes((\E X)_Y). \]
Note that $\E X = \tau_{\le 0}^\eff \Sigma^\infty_+ X$. By Lemma
\ref{prop:nalg-nullification}, $f_\#, p_\otimes, g^*$ commute with the
localization functor $\tau_{\le 0}^\eff$. Consequently we find that
\[ UF \E X = \colim_{\substack{f\colon S\to Spec(k)\\p\colon Y\to S}}
                          \E R_p (X \times_k Y) \]
and hence
\[ UF \E X[1/e] = \colim_{\substack{f\colon S\to Spec(k)\\p\colon Y\to S}}
                          \E R_p (X \times_k Y) [1/e]. \]
From now on, we will suppress $[1/e]$ from the notation; it should be understood
that all homotopy modules in sight are in $\HI_0(k)[1/e]$, and if not should be
replaced by $(?)[1/e]$.
We will also write $X_Y$ for $X \times_k Y$, particularly when viewed as a
$Y$-scheme, and similarly for other pairs of schemes.
It follows that a morphism of effective homotopy modules $\alpha \in [UF \E X,
A]$ consists of the following data:
\begin{itemize}
\item For each $S \in \SmQP_k$ and each finite étale morphism $p: Y \to S$, a
  class $\alpha_p \in A(R_p (X_Y))$,
\end{itemize}
subject to the following compatibility condition:
\begin{itemize}
\item For every cartesian square
\begin{equation} \label{eq:cart-square-condn}
\begin{CD}
Y_1 @>k>> Y_2 \\
@Vp_2VV   @Vp_1VV \\
S_1 @>h>> S_2
\end{CD}
\end{equation}
  in $\SmQP_k$ with $p_1$ (and hence $p_2$) finite étale, denote by $q:
  R_{p_1}(X_{Y_1}) \to R_{p_2}(X_{Y_2})$ the canonical map induced by $k$.
  Then $q^*(\alpha_{p_2}) = \alpha_{p_1} \in A(R_{p_1}(X_{Y_1}))$.
\end{itemize}

We have in particular the class $\alpha_1 := \alpha_{\id_k} \in A(X)$.

\textbf{Injectivity of $u$.} Let us
first show that if $\alpha: UF\E X \to A$ is indeed a morphism in $T^2(k)$, then
the classes $\alpha_p$ are all determined by $\alpha_1$. In other words, we will
show that $u$ is injective. To do this, let for $S \in \SmQP_k$ and $p: Y \to S$
finite étale, $a_p \in (UF\E X)(R_p(X_Y))$ denote the class corresponding
to the canonical map $\E R_p(X_Y) \to UF\E X$ coming from the colimit formula.
Then $\alpha_p = \alpha(a_p) \in
A(R_p(X_Y))$. Let $r_Y: X_Y \to X$ denote the canonical projection. It
follows from the ``moreover'' part of Lemma \ref{lemm:NAlg-HI-adjoint}
that
\begin{equation} N_{p,X_Y}(r_Y^*(a_1)) = a_p. \label{eq:ap-a1} \end{equation}
Thus $\alpha_p =
\alpha(a_p) = \alpha(N_{p,X_Y}(r_Y^*(a_1))) = N_{p,X_Y} r_Y^* \alpha(a_1) =
N_{p,X_Y} r_Y^* \alpha_1$, since $\alpha$ was assumed to be a morphism of
Tambara functors. This proves that $u$ is injective.

\textbf{Surjectivity of $u$.}
Now let $\alpha_1 \in A(X)$ and put $\alpha_p = N_{p, X_Y} r_Y^*(\alpha_1)$.
Consider a cartesian square as in condition \eqref{eq:cart-square-condn}. Then
\[ q^*(\alpha_{p_2}) = q^* N_{p_2,X_{Y_2}} r_{Y_2}^* (\alpha_1)
      = N_{p_1 X_{Y_1}} r_{Y_1}^* (\alpha_1) = \alpha_{p_1}, \]
by condition (2) of Definition \ref{defn:tambara-2}. In other words, the
compatibility condition is satisfied and we obtain a morphism of homotopy
modules $\alpha: UF\E X \to A$ corresponding to $\alpha_1$.

What remains to be
done is to show that this is a morphism of Tambara functors; then $u$ will be
surjective. By Lemma \ref{lemm:T1-gen-full} (and Proposition
\ref{prop:comparison}), it is enough to prove that if $K/k$ is
the perfect closure of a finitely
generated field extension, and $p: Spec(L) \to Spec(K)$ is finite étale, then
the following square commutes
\begin{equation} \label{eq:target-square}
\begin{CD}
(F\E X)(L) @>\alpha>> A(L) \\
@VN_pVV               @VN_pVV \\
(F\E X)(K) @>\alpha>> A(K).
\end{CD}
\end{equation}

We shall do this as follows. Let, for each $L \in \FEt_K$, $(F \E X)_0(L) \subset (F \E X)(L)$ denote the
subset of those elements obtained by iterated application of norm, restriction
and transfer (all along finite étale morphisms) from elements $b \in (F \E
X)(L')$, corresponding to maps of schemes $b': Spec(L') \to X$; in other words $b
= b'^*(a_1)$. We shall prove the following:
\begin{enumerate}[(a)]
\item If $s_1 \in (F \E X)_0(L_1)$ and $s_2 \in (F \E X)_0(L_2)$, then $(s_1,
s_2) \in (F
  \E X)_0(L_1 \coprod L_2)$ (for any $L_1, L_2 \in \FEt_K$).
\item The $\Z[1/e]$-module $(F \E X)(L)$ is generated by $(F \E X)_0(L)$ (for
  any $L \in \FEt_K$).
\item For any $p: L \to K \in \FEt_K$ and $a \in (F \E X)_0(L)$, we have $N_p
  (\alpha(a)) = \alpha(N_p(a))$.
\end{enumerate}

Let $F \E X|_{\FEt_K} \in T^\naive_{gc}(k)[1/e]$ denote the induced naive
motivic Tambara functor.
By (a), the subfunctor $(F \E X)_0 \subset F \E X|_{\FEt_K}$ preserves finite
products, and hence $(F \E X)_0 \in T^\naive(k)$. By (b), the canonical map $(F
\E X)_0^+[1/e] \to F \E X|_{\FEt_K}$ is an isomorphism (e.g. use Lemma
\ref{lemm:gc-easy}).
By (c), the composite $(F\E X)_0 \to F\E
X|_{\FEt_K} \xrightarrow{\alpha} A|_{\FEt_K}$ is a morphism of naive motivic
Tambara functors. It follows
that the unique induced map $(F\E X)_0^+[1/e] \wequi F\E
X|_{\FEt_K} \to A|_{\FEt_K}$ compatible with the $\Z[1/e]$-module structures, is a
morphism of naive motivic Tambara functors. Since $\alpha|\FEt_K$ is
compatible with the $\Z[1/e]$-module structures ($\alpha$ being a morphism of
homotopy modules), it must be this unique map, and hence a morphism of naive
motivic Tambara functors. This proves that square \eqref{eq:target-square}
commutes. This concludes the proof, modulo establishing (a)-(c).

\textbf{Proof of (a).}
We may assume that $s_1$ is obtained by a sequence of
operations $O^{(e_1)}_{f_1} \dots O^{(e_n)}_{f_n}(x_1)$, where $f_i$ are finite étale
maps, $e_i \in \{1, 2, 3\}$, $O^{(1)}$ means pullback, $O^{(2)}$ means norm,
$O^{(3)}$ means transfer, and $x_1$ corresponds to a map of schemes $x_1: L_1'
\to X$. Similarly $s_2 =
O^{(e_1')}_{f_1'} \dots O^{(e_m')}_{f_m'}(x_2)$. Then
\[ (s_1, s_2) = O^{(e_1)}_{f_1 \coprod \id} \dots O^{(e_n)}_{f_n \coprod \id}
O^{(e_1')}_{\id \coprod f_1'} \dots O^{(e_m')}_{\id \coprod f_m'}(x_1, x_2), \]
so it is enough to show that $(x_1, x_2) \in (F \E X)_0(L_1' \coprod L_2')$.
This is clear.

\textbf{Proof of (b).}
The $\Z[1/e]$-module $\E R_p X_Y (L)$ is generated by transfers of pullbacks of
$a_p$, by Corollary \ref{corr:generation}. Consequently $F \E X (L)$ is
generated as a $\Z[1/e]$-module by transfers and norms of pullbacks of $a_1$, by
\eqref{eq:ap-a1}. This was to be shown.

\textbf{Proof of (c).}
Let us call a section $s \in (F\E X)(L)$ \emph{good} if for any span $Spec(K')
\xleftarrow{p'} Spec(L') \xrightarrow{f} Spec(L)$ with $p'$ finite étale, we have
$\alpha(N_{p'}f^*(s)) = N_{p'}f^*\alpha(s)$. We need to show that all sections
of $(F \E X)_0$ are good. We shall prove that good sections are closed under
norms, transfer and pullback (steps (i)-(iii) below), and that sections of the
form $b^* a_1$ are good (step (iv)). This implies the desired result.

Step (i). Suppose $s$ is good and $f: Spec(L') \to Spec(L)$ is arbitrary. Then
$f^* s$ is good. This follows from transitivity of pullback.

Step (ii). Suppose $b: Spec(L') \to Spec(L)$ is finite étale and $s \in (F\E
X)(L')$ is good. Then $N_b(s)$ is good. Indeed by the base change formula,
we may assume that $f = \id$ and $p'=p$. Then $N_p(N_b(s)) = N_{p \circ b}(s)$
and so the relevant equality holds by assumption.

Step (iii). Suppose $b: Spec(L') \to Spec(L)$ is finite étale and $s \in (F\E
X)(L')$ is good. Then $tr_b(s)$ is good. By the base change
formula, we may assume that $f = \id$ and $p'=p$. Now $N_p tr_b (s) = tr_? N_?
e^*(s)$, by the distributivity law.
Since $N_? e^*(s)$ is good by steps (i) and (ii), and any morphism of
homotopy modules commutes with transfers, $tr_b(s)$ is indeed good.

Step (iv). Suppose that $s$ corresponds to a map of schemes $b: Spec(L) \to X$.
Then $s$ is good. Note that
$f^*(s)$ corresponds to $Spec(L') \to Spec(L) \to X$, so we may assume that $f =
\id$ and $p' = p$. Note that $s = b^* a_1$.
Let $\tilde b: Spec(L) \to X_L$ be induced by $b$, and write $b': Spec(K) \to
R_p(X_L)$ for the Weil restriction of $\tilde b$ along $p$. We shall use the following
observation, proved below: If $B \in T^2(k)$ is arbitrary and $t \in B(X)$, then
\begin{equation}\label{eq:technical-N-trick}
   N_p b^* t = b'^*N_{p,X_L}r_L^*(t).
\end{equation}
Thus
\[ \alpha(N_p b^* a_1) = \alpha(b'^* N_{p,X_L} r_L^* a_1) = b'^* \alpha(a_p) =
     b'^* \alpha_p,\]
using \eqref{eq:technical-N-trick} with $t = a_1$ and \eqref{eq:ap-a1}.
On the other hand
\[ N_p(\alpha(b^* a_1)) = N_p b^* \alpha_1 = b'^* N_{p,X_L} r_L^* \alpha_1 =
b'^* \alpha_p \] as well, using  \eqref{eq:technical-N-trick} with $t =
\alpha_1$, and \eqref{eq:ap-a1} again. Hence $s$ is good.

\textbf{Proof of (6).}
Apply condition (2) of Definition \ref{defn:tambara-2} to the diagram
\begin{equation*}
\begin{CD}
Spec(L) @>\tilde{b}>> X_L \\
@|    @VVV \\
Spec(L) @=   Spec(L)   \\
@VpVV  @VpVV \\
Spec(K) @= Spec(K).
\end{CD}
\end{equation*}
The result follows since $R_p(\tilde b) = b'$ and the composite $L
\xrightarrow{\tilde{b}} X_L \xrightarrow{r_L} X$ is just $b$.
\end{proof}

\begin{corollary} \label{corr:main}
Let $k$ be a perfect field of exponential characteristic $e \ne 2$.
The functor $\rho: \NAlg(\HI_0(k))[1/e] \to T^2(k)[1/e]$ is an equivalence of categories.
\end{corollary}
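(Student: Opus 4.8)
The plan is to deduce this formally from Theorem~\ref{thm:main} by comparing monads. First I would observe that both forgetful functors are monadic over $\HI_0(k)[1/e]$. The functor $U\colon \NAlg(\HI_0(k)) \to \HI_0(k)$ is monadic with left adjoint $F$, as recorded in the remark following Lemma~\ref{lemm:NAlg-HI-adjoint}. The functor $U_2\colon T^2(k) \to \HI_0(k)$ is monadic as well: via the equivalence $T^2(k) \simeq T^1(k)$ of Proposition~\ref{prop:comparison} it corresponds to $U_1$, which has a left adjoint (Lemma~\ref{lemm:T1-pres}), is readily seen to be conservative, and preserves reflexive coequalizers (Corollaries~\ref{corr:T-1-limits-colimits} and~\ref{corr:T1-sifted-colimits}); write $F_2$ for its left adjoint. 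Since the localizations $(-)[1/e]$ are compatible with these forgetful functors (\cite[Proposition~12.6]{bachmann-norms} on one side, Proposition~\ref{prop:comparison} together with Corollary~\ref{corr:T1-loc} on the other), the restricted functors $\NAlg(\HI_0(k))[1/e] \to \HI_0(k)[1/e]$ and $T^2(k)[1/e] \to \HI_0(k)[1/e]$ are again monadic, and $\rho$ restricts to a functor $\rho[1/e]$ between them lying over $\HI_0(k)[1/e]$ (by the square displayed before the theorem). Hence $\rho[1/e]$ induces a morphism of monads $U_2F_2 \to UF$ on $\HI_0(k)[1/e]$, and it is an equivalence of categories if and only if this morphism of monads is an equivalence.

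Next I would reduce checking the latter to the objects $\E X[1/e]$, $X \in \SmQP_k$. Both monads preserve sifted colimits: $F$ and $F_2$ preserve all colimits, $U$ preserves sifted colimits by Lemma~\ref{lemm:NAlg-HI-adjoint}, $U_2$ does by Corollary~\ref{corr:T1-sifted-colimits}, and both the inclusion $\HI_0(k)[1/e] \hookrightarrow \HI_0(k)$ and the localization $(-)[1/e]$ preserve sifted colimits. Moreover $\HI_0(k)[1/e]$ is generated under sifted colimits by the $\E X[1/e]$ with $X \in \SmQP_k$: the category $\HI_0(k) = \SH(k)^{\eff\heart}$ is generated under colimits by the truncations $\E U$ of smooth affine $k$-schemes (by Nisnevich descent and the generation of $\SH(k)^\veff$), such $U$ are quasi-projective, and since $\E(U \coprod U') \cong \E U \oplus \E U'$ and $\SmQP_k$ is closed under finite coproducts these generators are closed under finite coproducts, hence generate already under sifted colimits; finally $(-)[1/e]$ is colimit-preserving and essentially surjective. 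It follows that a natural transformation of sifted-colimit-preserving endofunctors of $\HI_0(k)[1/e]$ is an equivalence as soon as it is one on each $\E X[1/e]$.

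Finally, at $\E X[1/e]$ the morphism of monads is $U_2$ applied to the canonical comparison map $F_2(\E X[1/e]) \to \rho F(\E X[1/e])$ adjoint to the unit $\E X[1/e] \to U_2\rho F(\E X[1/e]) = UF\E X[1/e]$; and Theorem~\ref{thm:main} says precisely that this unit exhibits $\rho F \E X[1/e]$ as the free motivic Tambara functor of the second kind on $\E X[1/e]$, i.e.\ that $\rho F(\E X[1/e])$ satisfies the same universal property as $F_2(\E X[1/e])$. Therefore this comparison is an equivalence, the morphism of monads is an equivalence, and $\rho[1/e]$ is an equivalence of categories, as claimed.

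The genuine mathematical input is entirely Theorem~\ref{thm:main}; what remains here is formal. The main (minor) obstacle I expect is the bookkeeping of the first paragraph: confirming that monadicity and the compatibility of $(-)[1/e]$ with the forgetful functors descend to the $e$-local subcategories, and spelling out that $U_2$ — known to be a conservative right adjoint preserving sifted colimits — is therefore monadic; together with the observation that the generators $\E X[1/e]$ are closed under the finite coproducts needed to replace arbitrary colimits by sifted ones.
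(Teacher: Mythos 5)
Your proposal is correct and follows essentially the same route as the paper's proof: reduce to monadicity of $U$ and $U_2$ over $\HI_0(k)[1/e]$, compare the resulting monads, note both preserve sifted colimits, and check the comparison on the generators $\E X[1/e]$ (closed under finite coproducts, hence generating under sifted colimits), where Theorem \ref{thm:main} gives the equivalence. The only cosmetic difference is that you build the monad morphism directly from the unit via the canonical map $F_2 \to \rho F$, whereas the paper first produces a left adjoint $\delta$ of $\rho$ via the adjoint functor theorem and uses $\delta F_2 \simeq F$; you are also somewhat more careful than the paper about restricting everything to the $e$-local subcategories, which is harmless.
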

\begin{proof}
The functors $U_2$ and $U$ are right adjoints that preserve sifted, hence filtered, colimits,
and are
conservative. See Lemmas \ref{lemm:NAlg-HI-adjoint} and \ref{lemm:T1-pres}, and
Corollary \ref{corr:T1-sifted-colimits}.
It follows that $\rho$ preserves filtered colimits and (small) limits.
Consequently $\rho$ is an accessible functor which preserve limits. It
follows that it has a left adjoint \cite[Corollary 5.5.2.9]{lurie-htt}. Denote
the left adjoint of $U_2$ by $F_2: \HI_0(k)
\to T^2(k)$, and the left adjoint of $\rho$ by $\delta: T^2(k) \to
\NAlg(\HI_0(k))$. By the Barr-Beck-Lurie Theorem\footnote{In light of Remark
\ref{rmk:1-categorical} we are dealing with 1-categories only, so we are really
just using the classical monadicity theorem \cite[Section VI.7]{mac2013categories}.}, both functors $U_2$ and $U$ are monadic \cite[Theorem
4.7.4.5]{lurie-ha}. Let $M_2 = U_2 F_2$ and $M = UF$ denote the corresponding
monads. We obtain a morphism of monads $\alpha: M_2 = U_2 F_2 \Rightarrow U_2 \rho
\delta F_2 \wequi UF = M$. If $X \in \SmQP_k$ then $F_2 \E X [1/e]
\wequi \rho F \E X [1/e]$ by Theorem \ref{thm:main}, and so $U_2 F_2 \E X[1/e]
\wequi U_2 \rho F \E X [1/e] \wequi U_2 \rho \delta F_2 \E X [1/e]$.
In other words, $\alpha(\E X[1/e])$ is an
equivalence. Since $U, U_2, F, F_2$ preserve sifted colimits, $M, M_2$ preserve
sifted colimits. Since $\SH(k)^\veff$ is generated under colimits by
$\Sigma^\infty_+ \SmQP_k$ \cite[Remark after Proposition 4]{bachmann-very-effective},
its localization $\HI_0(k)[1/e]$ is generated under
colimits by $\E(\SmQP_k)[1/e]$. Since $\E(\SmQP_k)[1/e]$ is closed under finite coproducts, it
follows from Lemma \ref{lemm:colim-closure} below that $\HI_0(k)$ is generated under
sifted colimits by $\E(\SmQP_k)[1/e]$. Since $\alpha$ preserves sifted colimits and is
an equivalence on the generators, it is an equivalence in general. This
concludes the proof.
\end{proof}

We have used the following well-known result.
\begin{lemma} \label{lemm:colim-closure}
Let $\scr C$ be an $\infty$-category with (small) colimits and $S$ a (small)
set of objects closed under finite coproducts. The subcategory of $\scr C$ generated by $S$ under
sifted colimits coincides with the subcategory of $\scr C$ generated by $S$
under all (small) colimits.
\end{lemma}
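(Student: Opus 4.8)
The plan is to reduce the statement to the standard fact that, in an $\infty$-category admitting finite coproducts and sifted colimits, every small colimit is built from these two; see \cite[\S5.5.8]{lurie-htt}. Write $\scr D_\sigma \subset \scr C$ for the smallest full subcategory containing $S$ and closed under sifted colimits, and $\scr D_c \subset \scr C$ for the smallest full subcategory containing $S$ and closed under all small colimits. Since sifted colimits are in particular small colimits we have $\scr D_\sigma \subset \scr D_c$, so the content is the reverse inclusion, and for this it suffices to prove that $\scr D_\sigma$ is closed under all small colimits. By the cited fact, a full subcategory of $\scr C$ that is closed under both sifted colimits and finite coproducts is automatically closed under all small colimits; since $\scr D_\sigma$ is closed under sifted colimits by construction, it therefore remains only to show that $\scr D_\sigma$ is closed under finite coproducts.

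The key observation I would use is that for every object $Y \in \scr C$ the functor $(-) \sqcup Y\colon \scr C \to \scr C$ preserves sifted colimits: it factors as $X \mapsto (X, Y)$ followed by the binary coproduct functor $\sqcup\colon \scr C \times \scr C \to \scr C$, and the latter preserves all colimits, being left adjoint to the diagonal, while $X \mapsto (X, Y)$ preserves sifted colimits because a sifted $\infty$-category is weakly contractible, so that the constant diagram at $Y$ has colimit $Y$. Granting this, I would argue in two steps. First, for fixed $Y \in S$, the full subcategory of $\scr D_\sigma$ on those $X$ with $X \sqcup Y \in \scr D_\sigma$ contains $S$ (since $S$ is closed under finite coproducts) and is closed under sifted colimits (by the preservation property just noted together with closure of $\scr D_\sigma$ under sifted colimits), hence is all of $\scr D_\sigma$. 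Second, for an arbitrary fixed $X \in \scr D_\sigma$, the full subcategory of $\scr D_\sigma$ on those $Y$ with $X \sqcup Y \in \scr D_\sigma$ contains $S$ by the first step and is again closed under sifted colimits, now using that $X \sqcup (-)$ preserves them, hence is all of $\scr D_\sigma$. Thus $\scr D_\sigma$ is closed under binary coproducts; since the empty coproduct of $\scr C$, i.e. an initial object, lies in $S$ because $S$ is closed under nullary coproducts, an evident induction on the number of summands shows $\scr D_\sigma$ is closed under all finite coproducts.

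The argument is entirely formal, so I do not expect any real obstacle; the one point that requires care is citing the correct form of the ``colimits are generated by finite coproducts and sifted colimits'' statement from \cite{lurie-htt}, and a secondary point is the bookkeeping convention that ``closed under finite coproducts'' is to include the nullary coproduct (which is harmless here, since it holds in the intended application to $\E(\SmQP_k)[1/e]$).
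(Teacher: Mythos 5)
Your proof is correct and follows essentially the same route as the paper: both reduce to closure of the sifted-colimit closure under binary (and nullary) coproducts via \cite[Lemma 5.5.8.13]{lurie-htt}, and both use the same two-step bootstrapping argument, justified by the fact that $(-)\coprod Y$ preserves sifted colimits because sifted diagrams are weakly contractible. No issues to flag.
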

\begin{proof}
Let $\scr D \subset \scr C$ be the subcategory generated by $S$ under sifted
colimits. It suffices to show that $\scr D$ is closed under finite coproducts
\cite[Lemma 5.5.8.13]{lurie-htt}. Since $\emptyset \in S \subset \scr D$, it
suffices to consider binary coproducts. For $E \in \scr D$ write $\scr D_E \subset
\scr D$ for the subcategory of those $D \in \scr D$ with $E \coprod D \in \scr
D$. Since sifted simplicial sets are contractible, $\scr D_E$ is closed under
sifted colimits. Let $X \in S$. Then $S \subset \scr D_X$, and so $\scr D = \scr
D_X$. In other words, for $E \in \scr D$ and $X \in S$ we have $E \coprod X \in
\scr D$. Thus for $E \in \scr D$ arbitrary we have $S \subset \scr D_E$. It
follows again that $\scr D = \scr D_E$ and so $\scr D$ is closed under binary
coproducts, as desired.
\end{proof}

\begin{remark}
We have $T^1_{\Sm_k}(k) \wequi T^1_{\SmQP_k}(k)$ by Remark \ref{rmk:T1-C-comp}, and
$\NAlg_{\Sm_k}(\SH^{\eff\heart}) \wequi \NAlg_{\SmQP_k}(\SH^{\eff\heart})$ by
\cite[Remark 16.26]{bachmann-norms}. Hence in the statament of Corollary
\ref{corr:main}, we may replace $T^2(k)$ by $T^1_{\Sm_k}(k)$ and
$\NAlg(\HI_0(k))$ by $\NAlg_{\Sm_k}(\HI_0(k))$, which may be slightly more
natural choices.
\end{remark}

\begin{remark} \label{rmk:1-2}
Throughout this section we put $\scr C = \SmQP_k$ and $\scr V = \SmQP$. We
cannot reasonably hope to change $\scr V$. However, $\scr C = \SmQP_k$ was
mainly used as a simplifying assumption. It implies that $T^2_{\scr C}(k) \wequi
T^1_{\scr C}(k)$. This latter category is somewhat easier to study, and so we
were able to deduce somewhat cheaply that $T^2_\scr{C}(k)$ is presentable, and
so on. I contend that all the results about $T^2_\scr{C}$ remain valid for more
general $\scr C$, such as $\scr C = \FEt_k$ (except of course that in general
$T^2_\scr{C}(k) \not\wequi T^1_\scr{C}(k)$),
and that the same is true for Corollary \ref{corr:main}
\end{remark}

\begin{remark}
We were forced to invert the exponential characteristic $e$ essentially because
we needed to know that $(\E X)(K)$ is generated by transfers along finite
\emph{étale} morphisms, in some sense.
The proof shows that in general, $(\E
X)(K)$ is generated by transfers along finite, but not necessarily étale,
morphisms (see Remark \ref{rmk:non-perfect-gens}). Inverting $e$ allows us to restrict to $K$ perfect, where these two
classes of morphisms coincide.
\end{remark}

\bibliographystyle{plainc}
\bibliography{bibliography}

\end{document}